\documentclass[11pt]{article}

\usepackage{graphicx} \usepackage{amsmath,amssymb,bm,amsthm,mathrsfs,amscd, mathtools, url}
\usepackage{here}

\usepackage{fancyhdr}
\usepackage{tikz}
\usepackage{color}
\usepackage[inline]{enumitem}
\usepackage{xparse}

\usepackage{hyperref}

\theoremstyle{plain}
\newtheorem{thm}{Theorem}[section]
\newtheorem{lem}[thm]{Lemma}
\newtheorem{cor}[thm]{Corollary}
\newtheorem{prop}[thm]{Proposition}

\theoremstyle{definition}
\newtheorem{dfn}[thm]{Definition}
\newtheorem{eg}[thm]{Example}

\theoremstyle{remark}
\newtheorem{rem}[thm]{Remark}

\usepackage{authblk}

\allowdisplaybreaks

\newcommand{\dist}{\mathop{d_{\mathrm{R}}}}

\DeclareMathOperator{\Max}{Max}
\DeclareMathOperator{\Min}{Min}

\DeclareMathOperator{\rank}{rank}
\DeclareMathOperator{\supp}{supp}
\DeclareMathOperator{\Supp}{Supp}
\DeclareMathOperator{\Tr}{Tr}
\DeclareMathOperator{\wt}{wt}

\numberwithin{equation}{section}

\newcommand*{\Mat}[3]{\operatorname{Mat}_{#1 \times #2}(#3)}
\newcommand*{\qbinom}[3]{\genfrac{[}{]}{0pt}{}{#1}{#2}_{#3}}
\newcommand{\set}[1]{\left\{#1\right\}}
\renewcommand{\P}{P}
\renewcommand{\mid}{:}
\newcommand{\uniform}[3]{\mathcal{U}_{#1, #2}^{(#3)}}

\newcommand*{\supps}[2]{\mathscr{S}_{#1}(#2)}
\newcommand*{\minsupps}[2]{s_{#1}(#2)}

\newcommand{\abs}[1]{\left\lvert #1 \right\rvert}

\NewDocumentCommand{\qmpolyhigherweights}{ o m g }{V_{#2}\IfValueT{#3}{^{(#3)}}}

\NewDocumentCommand{\qmpolyhigherweightsg}{ o m g }{\mathcal{V}_{#2}\IfValueT{#3}{^{[#3]}}}

\NewDocumentCommand{\W}{ o m g }{W_{#2}\IfValueT{#1}{^{(#1)}}\IfValueT{#3}{\!\left(#3\right)}}

\NewDocumentCommand{\A}{ o m g }{A_{#2}\IfValueT{#1}{^{(#1)}}\IfValueT{#3}{\!\left(#3\right)}}
\NewDocumentCommand{\B}{ o m g }{B_{#2}\IfValueT{#1}{^{(#1)}}\IfValueT{#3}{\!\left(#3\right)}}
\NewDocumentCommand{\N}{ o m g }{N_{#2}\IfValueT{#1}{^{(#1)}}\IfValueT{#3}{\!\left(#3\right)}}
 \title{Higher Rank-Support Weights and $q$-Polymatroids}
\author[1]{Koji Imamura\thanks{k-imamura@imi.kyushu-u.ac.jp}}
\author[2]{Shinya Kawabuchi\thanks{k.shinya.math.eng@gmail.com}}
\author[3]{Keisuke Shiromoto\thanks{keisuke@kumamoto-u.ac.jp}}

\affil[1]{Institute of Mathematics for Industry (IMI), Kyushu University}
\affil[2]{Graduate School of Science and Technology, Kumamoto University}
\date{}
\affil[3]{Faculty of Advanced Science and Technology, Kumamoto University} 
\begin{document}

\maketitle

\begin{abstract}

The aim of this paper is to develop a $(q,m)$-polymatroidal approach to higher supports and higher rank-weight enumerators of rank-metric codes. In this framework, we establish analogs of several fundamental results known for matroids and linear codes, including the description of minimal supports in terms of cocircuits of truncations and a Greene-type identity relating higher rank-weight enumerators to rank generating functions. We also show that the associated $(q,m)$-polymatroid and the higher support distributions determine each other. As a further application, we derive MacWilliams-type identities for higher rank-weight enumerators.
\end{abstract} 
\section{Introduction} \label{sect:Introduction}

The relationship between linear codes and matroids is one of the classical links between coding theory and combinatorics. Given a linear code, one obtains an associated matroid from a generator matrix, and this matroid reflects the dependence structure of the coordinate positions. Through this correspondence, several fundamental parameters of the code may be interpreted in matroidal terms. Most notably, the minimum distance of the code is equal to the minimum size of the cocircuits of the associated matroid. This observation already illustrates that the distance properties of a code are closely tied to the combinatorial structure of the corresponding matroid.

A further manifestation of this relationship is Greene's identity, due to Greene~\cite{Greene}. His result states that the weight enumerator of a linear code is determined by the Tutte polynomial of its associated matroid. Accordingly, the weight distribution of the code can be understood through a distinguished invariant from matroid theory.

The matroidal viewpoint extends naturally from codewords to subcodes of fixed dimension.
For a linear code, one may consider the supports of all subcodes of a prescribed dimension,
and these higher-dimensional supports have likewise been studied from a matroid-theoretic perspective.
In particular, Britz showed that the minimal supports arising in this way are described by cocircuits of suitable truncations of the matroid associated with the code \cite{HigherSupportMatroid}.
Among the corresponding enumerative invariants, the higher weight enumerators are of particular interest.
Whereas the ordinary weight enumerator is obtained only as a specialization of the Tutte polynomial
and therefore does not, in general, determine it,
the full family of higher weight enumerators contains exactly the same information as the Tutte polynomial.
More precisely, the Tutte polynomial determines all higher weight enumerators,
and conversely the collection of higher weight enumerators determines the Tutte polynomial \cite{HigherSupportWeight}.

Rank-metric codes are linear codes equipped with the rank distance, with applications ranging
from random network coding to space-time coding.
In the Delsarte model, one studies $\mathbb{F}_{q}$-linear subspaces $\mathcal{C} \leq \Mat{n}{m}{\mathbb{F}_{q}}$,
in contrast to the Gabidulin model based on $\mathbb{F}_{q^{m}}$-linear subspaces of $\mathbb{F}_{q^{m}}^{n}$.
In this work we focus on the Delsarte setting and develop higher polynomial invariants
through $q$-analogs of polymatroids.

A key combinatorial approach was introduced by Shiromoto \cite{Shiromoto2019},
who associates to a Delsarte rank-metric code $\mathcal{C} \leq \Mat{n}{m}{\mathbb{F}_{q}}$
a $(q,m)$-polymatroid $P_{\mathcal{C}} = (E,\rho)$ on $E=\mathbb{F}_{q}^{n}$
and defines a four-variable rank generating function $R_{P_{\mathcal{C}}}$.
This framework yields a Greene-type identity expressing the rank-weight enumerator of
a Delsarte rank-metric code in terms of this polynomial invariant \cite{Shiromoto2019}.
In a different, though closely related, direction, Gluesing-Luerssen and Jany showed that,
for representable $q$-matroids, the Whitney function and the full family of higher weight enumerators
of any representing rank-metric code determine each other via monomial substitutions;
moreover, the $q$-matroid itself and the family of higher support distributions
likewise determine each other \cite{GLJ25MonomialSubstitution}.
By contrast, for Delsarte rank-metric codes and their associated $(q,m)$-polymatroids,
an analogous equivalence between higher support or weight data and
the corresponding polynomial invariant does not appear to have been made explicit
in the existing literature.
The purpose of the present paper is to provide such a correspondence in the framework of
Delsarte rank-metric codes and $(q,m)$-polymatroids,
from the perspective of higher support distributions and higher rank-weight enumerators.

The organization of the paper is as follows.
In Section~\ref{sect:Preliminaries}, we review the necessary background on the relationship
between Delsarte rank-metric codes and $(q, m)$-polymatroids.
In Section~\ref{sect:HigherRankSupportCodes}, we observe that the cocircuits of the $(q, m)$-polymatroid
associated with a given rank-metric code correspond to the minimal higher supports of the code.
We also show that the associated $(q,m)$-polymatroid determines the higher support distributions,
and conversely that the latter determine the $(q,m)$-polymatroid.
In Section~\ref{sect:Greene}, we establish a Greene-type identity for the higher rank-weight enumerators.
In Section~\ref{sect:Application}, we use the Greene-type identity established in Section 4 to derive a MacWilliams-type identity for the higher rank-weight enumerators, and to determine the higher rank-weight enumerators of MRD codes. 
\section{Preliminaries} \label{sect:Preliminaries}

Throughout this paper, let $R[x_1,\dots, x_l]$ be the polynomial ring in the indeterminates
  $x_1, \dots, x_l$ with coefficients in a ring $R$. 
  Let $n$, $m$ be fixed positive integers and
denote by $\Mat{n}{m}{\mathbb{F}_{q}}$ the $\mathbb{F}_{q}$-vector space
consisting of the $n \times m$ matrices with entries in $\mathbb{F}_{q}$.
Let $E \coloneqq \mathbb{F}_{q}^{n}$ be the $n$-dimensional column vector space.
We denote by $\mathcal{L}(E)$ the collection of all subspaces of $E$.
If $X, Y \in \mathcal{L}(E)$ and $X$ is a subspace of $Y$, we write $X \leq Y$. 
For any subset $S$ of an $\mathbb{F}_q$-vector space, we denote by $\langle S \rangle$ the $\mathbb{F}_q$-linear span of $S$. We denote the zero space by $0$.
For a collection $\mathcal{A}$ of subsets of a set, we denote the set of minimal (resp. maximal)
members in $\mathcal{A}$ under the inclusion relation by $\Min(\mathcal{A})$ (resp. $\Max(\mathcal{A})$).

\subsection{Delsarte Rank-Metric Code}

We first endow the matrix space $\Mat{n}{m}{\mathbb{F}_{q}}$ with the distance $\dist$
defined as $\dist(X,Y)\coloneqq \rank(X - Y)$ for all $X, Y \in \Mat{n}{m}{\mathbb{F}_{q}}$.
It is easy to see that $\dist(X, 0) = \rank X$.
A (Delsarte) \emph{rank-metric code} $\mathcal{C}$ is a subspace of $\Mat{n}{m}{\mathbb{F}_{q}}$.
Each matrix in $\mathcal{C}$ is called a \emph{codeword}, and
every $\mathbb{F}_{q}$-subspace $\mathcal{D} \leq \mathcal{C}$ is called
a \emph{subcode} of $\mathcal{C}$.
Throughout this paper, $\mathcal{C}$ denotes a rank-metric code,
and $k$ denotes the dimension of $\mathcal{C}$.

\begin{dfn} \label{dfn:support-weight}
  For every codeword $M \in \mathcal{C}$, we denote by $\supp(M)$ the column space of $M$,
  and call it the \emph{rank-support} of $M$.
  For every subcode $\mathcal{D} \leq \mathcal{C}$, 
  \begin{equation*} 
    \Supp(\mathcal{D}) \coloneqq \sum_{M \in \mathcal{D}}\supp(M)
  \end{equation*}
  is called the \emph{higher rank-support} of $\mathcal{D}$.
  The \emph{rank-weight} of $M \in \mathcal{C}$ and
  the \emph{higher rank-weight} of $\mathcal{D} \leq \mathcal{C}$ are defined as
  \begin{equation*}
    \wt(M) = \dim \supp(M)
    \quad \text{and} \quad
    \wt(\mathcal{D}) = \dim \Supp(\mathcal{D}),
  \end{equation*}
  respectively.
\end{dfn}

For any subspace $J \in \mathcal{L}(E)$, define
\begin{align*}
  \mathcal{C}(J) &\coloneqq \{ M \in \mathcal{C} \mid \supp(M) \subseteq J \}, \\
  J^{\perp} &\coloneqq \{ \bm{y} \in E \mid \bm{x} \cdot \bm{y} = 0 \text{ for all } \bm{x} \in J \},
\end{align*}
where $\bm{x} \cdot \bm{y}$ is the standard inner product over $\mathbb{F}_q$.
We remark that $\mathcal{C}(J)$ is an $\mathbb{F}_{q}$-subspace of $\Mat{n}{m}{\mathbb{F}_{q}}$ (cf. \cite[Lemma~1]{Shiromoto2019}).
For $\mathscr{X} \subseteq \mathcal{L}(E)$, we write $\mathscr{X}^{\perp}\coloneqq\set{X^{\perp}\mid X \in \mathscr{X}}$.
For all $M, N \in \Mat{n}{m}{\mathbb{F}_{q}}$, we define the \emph{inner product} of $M$ and $N$ as
\begin{equation*}
  M \cdot N \coloneqq \Tr(MN^{\top}), 
\end{equation*}
where $\Tr(\cdot)$ denotes the trace map on square matrices, and $N^{\top}$ is the transpose of $N$.
Then the \emph{dual} of $\mathcal{C}$ is defined as
\begin{equation*}
  \mathcal{C}^{\perp} \coloneqq \{ N \in \Mat{n}{m}{\mathbb{F}_{q}} \mid M \cdot N = 0 \text{ for all } M \in \mathcal{C} \}.
\end{equation*}
 
\begin{dfn} \label{dfn:rank_distribution}
  Let $\mathcal{C}$ be a rank-metric code in $\Mat{n}{m}{\mathbb{F}_{q}}$.
  The \emph{rank-weight distribution} of $\mathcal{C}$ is a function
  $A_{\mathcal{C}} \colon \{ 0, 1, \dots, n \} \to \mathbb{Z}$ defined as
  \begin{equation*}
    \A{\mathcal{C}}{w} \coloneqq \abs{\{M \in \mathcal{C} \mid \wt(M) = w \}}
    \qquad \text{for all } w \in \{ 0, 1, \dots, n \}.
  \end{equation*}
\end{dfn}

\subsection{$(q, m)$-Polymatroid and the Greene-Type Identity}

\begin{dfn} \label{dfn:qm_polymatroid}
  Fix a nonnegative integer $r$.
  A \emph{$(q, r)$-polymatroid} is an ordered pair $\P = (E, \rho)$ consisting of
  a finite-dimensional vector space $E$ over $\mathbb{F}_{q}$ and
  a function $\rho \colon \mathcal{L}(E) \to \mathbb{Z}_{\geq 0}$ having the following properties:
  \begin{enumerate}[label=\textup{(R\arabic*)}, itemsep=0ex]
    \item \label{r1} If $A \leq E$, then $0 \leq \rho(A) \leq r\dim A$.
    \item \label{r2} If $A, B \leq E$ and $A \leq B$, then $\rho(A) \leq \rho(B)$.
    \item \label{r3} If $A , B\leq E$, then $\rho(A + B) + \rho(A \cap B) \leq \rho(A) + \rho(B)$.
  \end{enumerate}
  Furthermore, we call $\rho(E)$ the \emph{rank} of $\P$.
\end{dfn}

The following is one of the most fundamental $(q, m)$-polymatroids.

\begin{dfn}[\cite{q-uniform, rankmetric_q-poly}]
  \label{dfn:uniform_qm_polymatroid}
  Let $\ell$ be a nonnegative integer with $\ell \leq n$.
  If $\rho \colon \mathcal{L}(E) \to \mathbb{Z}_{\geq 0}$ is defined as
  \begin{equation*}
    \rho(J)\coloneqq m \min \set{\dim J, \ell}\qquad(J \in \mathcal{L}(E)),
  \end{equation*}
  we call the pair $(E, \rho)$ a \emph{uniform} $(q, m)$-polymatroid of rank $m\ell$ and denote it by $\uniform{\ell}{n}{m}$.
\end{dfn}

\begin{dfn}[\cite{Shiromoto2019}] \label{dfn:rank_generating_function}
  Let $\P$ be a $(q, m)$-polymatroid. We define the \emph{rank generating function} $R_{\P}$ of $\P$ as follows:
  \begin{equation*}
    R_{\P}(X_{1}, X_{2}, X_{3}, X_{4}) \coloneqq \sum_{A \in \mathcal{L}(E)} X_{1}^{\rho(E)-\rho(A)}X_{2}^{m\dim A - \rho(A)}g_{\dim A}(X_{3}, X_{4}),
  \end{equation*}
  where, with the convention that the empty product is $1$,
  \begin{equation*}
    g_{\ell}(X, Y) \coloneqq \prod_{i=0}^{\ell- 1}(X-q^{i}Y) \qquad (\ell \in \mathbb{Z}_{\geq 0}).
  \end{equation*}
\end{dfn}

\begin{rem}
  One may also consider the two-variable Whitney rank generating function
  \begin{equation*}
    \widetilde{R}_{\P}(X,Y)
      \coloneqq
      \sum_{J\in \mathcal{L}(E)} X^{\rho(E)-\rho(J)}Y^{m\dim J-\rho(J)}.
  \end{equation*}
  This polynomial contains exactly the same information as
  the four-variable rank generating function $R_{\P}(X_{1}, X_{2}, X_{3}, X_{4})$.
  Indeed, since $g_{\ell}(1,0) = 1$ for every $\ell \geq 0$, one has
  \begin{equation*}
    \widetilde{R}_{\P}(X,Y) = R_{\P}(X, Y, 1, 0).
  \end{equation*}
  Conversely, if
  \begin{equation*}
    \widetilde{R}_{\P}(X,Y)
    =\sum_{a=0}^{\rho(E)} \sum_{b=0}^{nm - \rho(E)} c_{a,b} X^{a} Y^{b},
  \end{equation*}
  then
  \begin{equation*}
    R_{\P}(X_{1}, X_{2}, X_{3}, X_{4})
     =
     \sum_{a=0}^{\rho(E)} \sum_{b=0}^{nm-\rho(E)} c_{a,b}\,
     X_{1}^{a} X_{2}^{b} \,
     g_{\frac{\rho(E)-a+b}{m}}(X_3,X_4),
  \end{equation*}
  where $(\rho(E) - a + b)/m \in \mathbb{Z}_{\geq 0}$ whenever $c_{a,b} \neq 0$.
  Thus passing from $R_{\P}$ to $\widetilde{R}_{\P}$ does not lose any information.
  
  The distinction between the two is therefore not one of information content,
  but of presentation: the four-variable polynomial keeps the $(\dim J)$-grading
  explicit, whereas the two-variable polynomial suppresses it. For the Greene-type
  and MacWilliams-type identities considered here, this grading is the natural
  one. Indeed, the Greene-type identity below is obtained from $R_{\P}$ by a direct
  specialization of the variables, while a formulation in terms of
  $\widetilde{R}_{\P}$ requires first recovering the $(\dim J)$-grading and then
  reintroducing the factor $g_{\dim J}$. For this reason, although
  $\widetilde{R}_{\P}$ is an equivalent invariant, $R_{\P}$ is better
  suited for the identities studied in this paper.
\end{rem}

Given a pair of nonnegative integers $a$ and $b$ with $a \geq b$,
the \emph{$q$-binomial} or \emph{Gaussian coefficient} counts the number of $b$-dimensional subspaces
of an $a$-dimensional subspace over $\mathbb{F}_{q}$ and is given by:
\begin{equation*}
  \qbinom{a}{b}{q} \coloneqq \prod_{i=0}^{b - 1} \frac{q^{a} - q^{i}}{q^{b} - q^{i}}.
\end{equation*}
We also use the following extension for integer upper arguments:
for $a \in \mathbb{Z}$ and $b \in \mathbb{Z}_{\geq 0}$, set
$\qbinom{a}{b}{q}
  \coloneqq 
  \prod_{i=0}^{b-1}\frac{q^{a-i}-1}{q^{b-i}-1}$,
and set it equal to $0$ if $b < 0$. We write $\qbinom{E}{j}{q}$ to denote the set of all $j$-subspaces of $E$
(the $j$-Grassmannian of $E$).

\begin{prop} \label{prop:rank_generating_function_uniform}
  The rank generating function of the uniform $(q, m)$-polymatroid $\uniform{\ell}{n}{m}$ of rank $m\ell$ is
  \begin{equation*}
    R_{\uniform{\ell}{n}{m}}(X_1,X_2,X_3,X_4)
    =
\sum_{j=0}^{\ell}
\qbinom{n}{j}{q}
X_1^{m(\ell-j)}
g_j(X_3,X_4)
+
\sum_{j=\ell+1}^{n}
\qbinom{n}{j}{q}
X_2^{m(j-\ell)}
g_j(X_3,X_4).
  \end{equation*}
\end{prop}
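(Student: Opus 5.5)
The plan is to substitute the rank function of $\uniform{\ell}{n}{m}$ directly into Definition~\ref{dfn:rank_generating_function} and group the subspaces $A \in \mathcal{L}(E)$ by dimension. In the definition, the summand attached to $A$ depends on $A$ only through $\rho(A)$ and $\dim A$. For the uniform polymatroid, $\rho(A) = m\min\{\dim A,\ell\}$ depends only on $\dim A$. Hence, for each $j \in \{0,1,\dots,n\}$, all $A \in \qbinom{E}{j}{q}$ contribute the same monomial times $g_j(X_3,X_4)$. There are exactly $\qbinom{n}{j}{q}$ such subspaces, so
\begin{equation*}
R_{\uniform{\ell}{n}{m}}(X_1,X_2,X_3,X_4) = \sum_{j=0}^{n} \qbinom{n}{j}{q} X_1^{\rho(E)-m\min\{j,\ell\}} X_2^{mj-m\min\{j,\ell\}} g_j(X_3,X_4).
\end{equation*}

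Next I record that $\rho(E) = m\min\{n,\ell\} = m\ell$, which uses the hypothesis $\ell \le n$; this also confirms that the rank is $m\ell$, as stated. Then I split the sum according to whether $j \le \ell$ or $j > \ell$.

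For $0 \le j \le \ell$, we have $\min\{j,\ell\} = j$. The exponent of $X_1$ is therefore $m(\ell - j)$, and the exponent of $X_2$ is $mj - mj = 0$.

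For $\ell+1 \le j \le n$, we have $\min\{j,\ell\} = \ell$. The exponent of $X_1$ is therefore $m\ell - m\ell = 0$, and the exponent of $X_2$ is $m(j-\ell)$.

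Substituting these exponents gives the two sums in the statement. There is no real obstacle here. The only points needing care are the value $\rho(E) = m\ell$, which depends on $\ell \le n$, and the fact that the count of $j$-subspaces of $E = \mathbb{F}_q^n$ is the Gaussian coefficient $\qbinom{n}{j}{q}$ as defined earlier.
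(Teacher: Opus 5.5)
Your proposal is correct and follows the paper's own proof essentially line for line. You group subspaces by dimension, count each class with $\qbinom{n}{j}{q}$, use $\rho(E)=m\ell$, and split the sum at $\min\{j,\ell\}$.
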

\begin{proof}
  \begin{align*}
R_{\uniform{\ell}{n}{m}}(X_1,X_2,X_3,X_4)
&=
\sum_{A\in\mathcal L(E)}
X_1^{m\ell -\rho(A)}X_2^{m\dim A-\rho(A)}g_{\dim A}(X_3,X_4)\\
&=
\sum_{j=0}^{n}
\qbinom{n}{j}{q}
X_1^{m\ell-m\min\{j,\ell\}}
X_2^{mj-m\min\{j,\ell\}}
g_j(X_3,X_4)\\
&=
\sum_{j=0}^{\ell}
\qbinom{n}{j}{q}
X_1^{m(\ell-j)}
g_j(X_3,X_4)
+
\sum_{j=\ell+1}^{n}
\qbinom{n}{j}{q}
X_2^{m(j-\ell)}
g_j(X_3,X_4). \qedhere
\end{align*}
\end{proof}

\begin{prop}[{\cite{Shiromoto2019}}]
  \label{prop:qm_polymatroid_from_rank_metric_code}
  Let $\mathcal{C}\leq \Mat{n}{m}{\mathbb{F}_{q}}$ be a Delsarte rank-metric code, and let $\rho_{\mathcal{C}} \colon \mathcal{L}(E) \to \mathbb{Z}_{\geq 0}$ be the function defined as
  \begin{equation*}
    \rho_{\mathcal{C}}(J) \coloneqq \dim \mathcal{C} - \dim \mathcal{C}(J^{\perp})
    \qquad\text{for all } J \in \mathcal{L}(E).
  \end{equation*}
  Then, $\P_{\mathcal{C}} \coloneqq (E, \rho_{\mathcal{C}})$ is a $(q, m)$-polymatroid.
\end{prop}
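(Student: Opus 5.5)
We verify the three axioms \ref{r1}--\ref{r3} for $\rho_{\mathcal{C}}$ directly from the definition of $\mathcal{C}(J)$. First we record two elementary facts about the map $J \mapsto \mathcal{C}(J)$.
\begin{enumerate*}[label=(\roman*)]
  \item It is monotone: $J \leq K$ implies $\mathcal{C}(J) \leq \mathcal{C}(K)$.
  \item It converts intersections into intersections: $\mathcal{C}(J\cap K)=\mathcal{C}(J)\cap\mathcal{C}(K)$, since $\supp(M)\subseteq J\cap K$ if and only if $\supp(M)\subseteq J$ and $\supp(M)\subseteq K$.
\end{enumerate*}
Moreover, $\mathcal{C}(J)+\mathcal{C}(K)\leq \mathcal{C}(J+K)$, because $\supp(M+N)\subseteq \supp(M)+\supp(N)$. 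We also use the standard facts $(A+B)^\perp = A^\perp\cap B^\perp$ and $(A\cap B)^\perp = A^\perp + B^\perp$ for the nondegenerate standard inner product on $E$.

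\textbf{(R2)} If $A\leq B$, then $B^\perp\leq A^\perp$, so $\mathcal{C}(B^\perp)\leq\mathcal{C}(A^\perp)$. Hence $\rho_{\mathcal{C}}(A)\leq\rho_{\mathcal{C}}(B)$.

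\textbf{(R3)} Put $U=\mathcal{C}(A^\perp)$ and $V=\mathcal{C}(B^\perp)$. Then
\[
\mathcal{C}((A+B)^\perp)=\mathcal{C}(A^\perp\cap B^\perp)=U\cap V
\quad\text{and}\quad
\mathcal{C}((A\cap B)^\perp)=\mathcal{C}(A^\perp+B^\perp)\geq U+V .
\]
Therefore
\[
\dim\mathcal{C}((A+B)^\perp)+\dim\mathcal{C}((A\cap B)^\perp)\geq \dim(U\cap V)+\dim(U+V)=\dim U+\dim V.
\]
Subtracting both sides from $2k$ gives \ref{r3}.

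\textbf{(R1)} Nonnegativity is immediate, since $\mathcal{C}(J^\perp)\leq\mathcal{C}$. The substantive step is the upper bound $\dim\mathcal{C}-\dim\mathcal{C}(J^\perp)\leq m\dim J$. For this, consider the $\mathbb{F}_q$-linear map
\[
\phi\colon \mathcal{C}\to \Mat{j}{m}{\mathbb{F}_q},\qquad M\mapsto Y^{\top}M,
\]
where $j=\dim J$ and $Y$ is an $n\times j$ matrix whose columns form a basis of $J$. A matrix $M$ lies in $\ker\phi$ exactly when every column of $M$ is orthogonal to $J$, that is, when $\supp(M)\subseteq J^\perp$. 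Thus $\ker\phi=\mathcal{C}(J^\perp)$, and rank--nullity gives
\[
\rho_{\mathcal{C}}(J)=\dim\operatorname{im}\phi\leq jm .
\]
In the same way, taking $J=E$ shows $\rho_{\mathcal{C}}(E)=k$, because $\mathcal{C}(0)=0$.

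The main point requiring care is this kernel identification in (R1). The other two axioms reduce to the lattice identities for orthogonal complements together with the containment $\mathcal{C}(J)+\mathcal{C}(K)\leq\mathcal{C}(J+K)$. In (R3) this containment is used only in the inequality direction, which is exactly the direction needed.
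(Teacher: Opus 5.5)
Your proof is correct and complete. The paper gives no proof of this proposition. It is quoted from \cite{Shiromoto2019}, and the fact that $\mathcal{C}(J)$ is a subspace is cited separately as \cite[Lemma~1]{Shiromoto2019}. Your argument is the standard one for this fact: (R2) and (R3) follow from monotonicity, $\mathcal{C}(J\cap K)=\mathcal{C}(J)\cap\mathcal{C}(K)$ and $\mathcal{C}(J)+\mathcal{C}(K)\le\mathcal{C}(J+K)$, and (R1) follows from a dimension count.

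Two small remarks. First, your identification $\ker\phi=\mathcal{C}(J^{\perp})$ is equivalent to the Grassmann-formula bound $\dim\bigl(\mathcal{C}\cap\{M:\supp(M)\le J^{\perp}\}\bigr)\ge k+m(n-j)-mn$. It also shows that every $\mathcal{C}(K)$ is a subspace (take $J=K^{\perp}$). You rely on that property implicitly when you form $U\cap V$ and $U+V$ and take dimensions. Second, in (R3) you only need the trivial inclusion $A^{\perp}+B^{\perp}\le(A\cap B)^{\perp}$ together with monotonicity, not the full equality $(A\cap B)^{\perp}=A^{\perp}+B^{\perp}$.
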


\begin{eg} \label{eg:rank_generating_function}
  Set
  \begin{equation*}
    M_{1}=
    \begin{pmatrix}
      1 & 0 & 0 \\
      0 & 1 & 0
    \end{pmatrix},\quad
    M_2=
    \begin{pmatrix}
      0 & 1 & 0 \\
      0 & 0 & 1
    \end{pmatrix},\quad
    M_3=
    \begin{pmatrix}
      0 & 0 & 1 \\
      1 & 1 & 0
    \end{pmatrix} \in \Mat{2}{3}{\mathbb{F}_{2}}.
  \end{equation*}
  Let $\mathcal{C} = \langle M_1,M_2,M_3\rangle \leq \Mat{2}{3}{\mathbb{F}_{2}}$ be a rank-metric code.
  Since $M_{1}, M_{2}, M_{3}$ are linearly independent, $k = \dim \mathcal{C} = 3$.
  A direct computation shows that $\supp(M) = \mathbb{F}_{2}^{2} = E$ for all nonzero codewords $M \in \mathcal{C}$.
  Thus, the rank of each subspace in $\mathcal{L}(E)$ is calculated as follows:
  \begin{align*}
    &\rho_{\mathcal{C}}(0) = \dim \mathcal{C} - \dim \mathcal{C}(E) = 0, \\
    &\rho_{\mathcal{C}}(\langle (1, 0)^{\top} \rangle)
    = \rho_{\mathcal{C}}(\langle (0, 1)^{\top} \rangle)
    = \rho_{\mathcal{C}}(\langle (1, 1)^{\top} \rangle)
    = \rho_{\mathcal{C}}(E) = \dim \mathcal{C} - 0 = 3.
  \end{align*}
  Consequently, the $(2, 3)$-polymatroid obtained from $\mathcal{C}$ as in Proposition~\ref{prop:qm_polymatroid_from_rank_metric_code} is
  the uniform $(2, 3)$-polymatroid $\uniform{1}{2}{3}$ of rank $3$.
  By Proposition~\ref{prop:rank_generating_function_uniform}, the rank generating function
  of $\P_{\mathcal{C}} = \uniform{1}{2}{3}$ is
  \begin{align*}
    R_{\P_{\mathcal{C}}}(X_{1}, X_{2}, X_{3}, X_{4})
    &= \qbinom{2}{0}{2} X_{1}^{3(1 - 0)} g_{0}(X_{3}, X_{4}) + \qbinom{2}{1}{2} X_{1}^{3(1-1)}g_{1}(X_{3}, X_{4}) + \qbinom{2}{2}{2} X_{2}^{3(2-1)} g_{2}(X_{3}, X_{4}) \\
    &= X_{1}^{3} + 3(X_{3} - X_{4}) + X_{2}^{3} (X_{3} - X_{4})(X_{3} - 2X_{4}).
  \end{align*}
\end{eg}

\begin{thm}[{\cite{Shiromoto2019}}]
  Let $W_{\mathcal{C}}(x, y) \in \mathbb{Z}\lbrack x, y\rbrack$
  be the \emph{rank-weight enumerator} of $\mathcal{C}$ defined as
  \begin{equation*}
    W_{\mathcal{C}}(x, y)\coloneqq \sum_{w = 0}^{n} \A{\mathcal{C}}{w} x^{n-w}y^{w},
  \end{equation*}
  where $\A{\mathcal{C}}$ is the rank-weight distribution of $\mathcal{C}$. Then, 
  \begin{equation*}
    W_{\mathcal{C}}(x, y)= y^{n-\dim\mathcal{C}/m}R_{\P_{\mathcal{C}}}\left((qy^{1/m}), \frac{1}{y^{1/m}}, x, y\right).
  \end{equation*}
\end{thm}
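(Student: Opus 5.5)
Write $k=\dim\mathcal{C}$, and note that $\rho_{\mathcal{C}}(E)=k$ because $\mathcal{C}(0)=0$. We substitute $X_1=qy^{1/m}$, $X_2=y^{-1/m}$, $X_3=x$, $X_4=y$ in Definition~\ref{dfn:rank_generating_function}. For each $A\in\mathcal{L}(E)$ with $\dim A=a$, the summand of $R_{\P_{\mathcal{C}}}$ then becomes
\begin{equation*}
q^{k-\rho(A)}\,y^{(k-\rho(A))/m}\,y^{-(ma-\rho(A))/m}\,g_a(x,y)=q^{\dim\mathcal{C}(A^{\perp})}\,y^{k/m-a}\,g_a(x,y),
\end{equation*}
where we used $k-\rho_{\mathcal{C}}(A)=\dim\mathcal{C}(A^\perp)$. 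Multiplying by $y^{n-k/m}$, the fractional powers cancel. The identity to prove is therefore
\begin{equation*}
W_{\mathcal{C}}(x,y)=\sum_{A\in\mathcal{L}(E)}\abs{\mathcal{C}(A^{\perp})}\,y^{n-\dim A}\,g_{\dim A}(x,y).
\end{equation*}
Next we reindex by $J=A^\perp$, which is a bijection on $\mathcal{L}(E)$ with $\dim J=n-\dim A$. This turns the right-hand side into $\sum_{J}\abs{\mathcal{C}(J)}\,y^{\dim J}g_{n-\dim J}(x,y)$.

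We expand $\abs{\mathcal{C}(J)}=\sum_{M\in\mathcal{C}}[\supp(M)\le J]$ and exchange the order of summation. The right-hand side becomes
\begin{equation*}
\sum_{M\in\mathcal{C}}\ \sum_{J\ge\supp(M)} y^{\dim J}g_{n-\dim J}(x,y).
\end{equation*}
For a fixed $M$ with $\wt(M)=w$, the subspaces $J\ge \supp(M)$ of dimension $w+i$ correspond to $i$-subspaces of $E/\supp(M)$. There are $\qbinom{n-w}{i}{q}$ of them. The inner sum is therefore $y^{w}\sum_{i=0}^{n-w}\qbinom{n-w}{i}{q}y^{i}g_{n-w-i}(x,y)$. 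The whole claim thus reduces to the single polynomial identity
\begin{equation*}
\sum_{i=0}^{N}\qbinom{N}{i}{q}\,y^{i}\,g_{N-i}(x,y)=x^{N}\qquad(N\ge 0),
\end{equation*}
applied with $N=n-w$. Summing $x^{n-w}y^{w}$ over $M$ then gives exactly $\sum_w A_{\mathcal{C}}(w)x^{n-w}y^w=W_{\mathcal{C}}(x,y)$.

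The main step is this $q$-binomial identity, and it is the one I expect to be the obstacle. I would prove it by counting in the style of Greene. First fix $x=q^{s}$ for an integer $s\ge 0$ and set $y=1$. Then $g_{N-i}(q^s,1)=\prod_{j<N-i}(q^s-q^j)$ counts injective linear maps $\mathbb{F}_q^{N-i}\to\mathbb{F}_q^{s}$. Now classify all $q^{sN}$ linear maps $\mathbb{F}_q^N\to\mathbb{F}_q^s$ by their kernel $K$, of dimension $i$, say. There are $\qbinom{N}{i}{q}$ choices of $K$, and for each the induced map $\mathbb{F}_q^N/K\to\mathbb{F}_q^s$ is injective. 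This gives $\sum_i\qbinom{N}{i}{q}g_{N-i}(q^s,1)=q^{sN}$.

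To pass to general $x,y$ we use homogeneity. Both sides are homogeneous of degree $N$ in $(x,y)$, since $g_\ell$ is homogeneous of degree $\ell$. So it suffices to prove the identity for $y=1$. With $y=1$, both sides are polynomials in $x$ that agree at the infinitely many points $x=q^s$, hence they are equal. Alternatively, one can argue by induction on $N$ using the $q$-Pascal rule $\qbinom{N+1}{i}{q}=\qbinom{N}{i-1}{q}+q^{i}\qbinom{N}{i}{q}$ together with $g_{\ell+1}(x,y)=g_\ell(x,y)(x-q^\ell y)$. The remaining steps are routine bookkeeping of exponents.
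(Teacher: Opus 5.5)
Your proof is correct. The paper does not prove this statement itself; it quotes it from \cite{Shiromoto2019}. So the relevant comparison is with the proof of Theorem~\ref{thm:Greene_subsp}. Run with $|\mathcal{C}(K)|=q^{\dim\mathcal{C}(K)}$ in place of $\qbinom{\dim\mathcal{C}(K)}{r}{q}$, that proof gives $W_{\mathcal{C}}(x,y)=S_{\P_{\mathcal{C}}}(q,x,y)$, and Lemma~\ref{lem:expansion_SC} then yields the stated form.

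The two arguments run in opposite directions. The paper's route goes through M\"obius inversion: it inverts $|\mathcal{C}(J)|=\sum_{W\le J}\#\{M\in\mathcal{C}: \supp(M)=W\}$ on $\mathcal{L}(E)$. It then evaluates $\sum_{J\ge K}\mu(K,J)x^{n-\dim J}y^{\dim J}=y^{\dim K}g_{n-\dim K}(x,y)$ (Lemma~\ref{lem:basis_exchange_with_Mobius}) using the $q$-binomial theorem.

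You go in the zeta direction instead. You never invert; you expand $|\mathcal{C}(J)|$ as a sum of indicators over codewords. What you then need is $\sum_{J\ge K}y^{\dim J}g_{n-\dim J}(x,y)=x^{n-\dim K}y^{\dim K}$, which is exactly the M\"obius inverse of Lemma~\ref{lem:basis_exchange_with_Mobius} on the interval $[K,E]$.

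Your proof of this identity is sound: you classify linear maps $\mathbb{F}_q^N\to\mathbb{F}_q^s$ by kernel, then use homogeneity and interpolation at $x=q^s$. It is more elementary than the paper's route, needing neither the M\"obius function nor the $q$-binomial theorem. It also carries over verbatim to $W_{\mathcal{C}}^{(r)}$: sum over $\mathcal{D}\in\qbinom{\mathcal{C}}{r}{q}$ with the indicator $[\Supp(\mathcal{D})\le J]$ (Lemma~\ref{lem:DCJ}). In exchange, the paper's route produces the explicit support distributions $A^{(r)}_{\mathcal{C}}(W)$ along the way (Theorem~\ref{thm:equivalence_A_P}), which it needs anyway.

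One small slip concerns only your alternative inductive proof, not the main argument. Write $S_N$ for the left-hand side of your identity. Pairing $\qbinom{N+1}{i}{q}=\qbinom{N}{i-1}{q}+q^{i}\qbinom{N}{i}{q}$ with $g_{\ell+1}(x,y)=g_\ell(x,y)(x-q^\ell y)$ leaves a $q^i$-weighted sum that is not $S_N$, so the bookkeeping does not close as stated. Either of two fixes works:
\begin{enumerate}[label=\textup{(\roman*)}, itemsep=0ex]
\item Use $\qbinom{N+1}{i}{q}=\qbinom{N}{i}{q}+q^{N+1-i}\qbinom{N}{i-1}{q}$ with your recursion for $g$. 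The two correction terms cancel and $S_{N+1}=xS_N$.
\item Keep your Pascal rule and use $g_{\ell+1}(x,y)=(x-y)\,g_\ell(x,qy)$. This gives $S_{N+1}(x,y)=yS_N(x,y)+(x-y)S_N(x,qy)$.
\end{enumerate}
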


\begin{eg}
  Let $\mathcal{C} = \langle M_{1}, M_{2}, M_{3} \rangle \leq \Mat{2}{3}{\mathbb{F}_{2}}$ be
  the rank-metric code in Example~\ref{eg:rank_generating_function}.
  Since all nonzero codewords in $\mathcal{C}$ have rank $2$, the weight enumerator of $\mathcal{C}$ is
  \begin{equation*}
    W_{\mathcal{C}}(x, y) = x^{2} + 7y^{2},
  \end{equation*}
  and so $\A{\mathcal{C}}{0} = 1$, $\A{\mathcal{C}}{1} = 0$, $\A{\mathcal{C}}{2} = 7$.
  Then, by Example~\ref{eg:rank_generating_function},
  \begin{align*}
    y^{n - \dim \mathcal{C}/m} R_{\P_{\mathcal{C}}}\left( (qy^{1/m}), \frac{1}{y^{1/m}}, x, y \right)
    &= y^{2 - 3/3} \left\{ (2y^{1/3})^{3} + 3(x - y) + \frac{1}{y^{3/3}}(x - y)(x - 2y) \right\} \\
    &= y \left( 8y + 3x - 3y + \frac{x^{2} -3xy + 2y^{2}}{y} \right) \\
    &= x^{2} + 7y^{2} = W_{\mathcal{C}}(x, y).
  \end{align*}
\end{eg} 
\section{Higher Rank Supports and $(q, m)$-Polymatroids} \label{sect:HigherRankSupportCodes}

\subsection{Cocircuits and Minimal Supports}
\label{subsect:CocircuitsAndMinimalSupports}

First, we show that the family of minimal higher rank supports arising from subcodes of $C$ coincides with
the family of cocircuits of a truncation of the associated $(q,m)$-polymatroid.
Throughout this subsection, we let $\P=(E,\rho)$ be a $(q,m)$-polymatroid
and $\mathcal{C}$ be a rank-metric code in $\Mat{n}{m}{\mathbb{F}_{q}}$.

\begin{dfn} \label{dfn:independent-space}
  Let $I \leq E$.
  If $\rho(I) =  m \dim I$, we call $I$ an \emph{independent} space.
  If $D \leq E$ is not independent, we call $D$ a \emph{dependent} space.
\end{dfn}

\begin{dfn} \label{dfn:circuit}
  Let $D \leq E$ be a dependent space of $\P$. 
  We call $D$ a \emph{circuit} if every proper subspace $D^{\prime} \lneq D$ is independent.
  We denote by $\mathscr{C}(\P)$ the collection of circuits of $\P$.
\end{dfn}

\begin{dfn} \label{dfn:hyperplane}
  Let $H \leq E$. If $\rho(H) < \rho(E)$ and $\rho(H')=\rho(E)$ for all $H \lneq H^{\prime} \leq E$, we call $H$ a \emph{hyperplane} of $\P$, and denote by $\mathscr{H}(\P)$
  the collection of hyperplanes of $\P$.
\end{dfn}

\begin{dfn} \label{dfn:dual}
  We call $\P^{\ast} = (E, \rho^{\ast})$ the \emph{dual} of $\P$,
  where $\rho^{\ast} \colon \mathcal{L}(E) \to \mathbb{Z}_{\geq 0}$
  is defined as $\rho^{\ast}(J) \coloneqq \rho(J^{\perp}) + m\dim{J}-\rho(E)$
  for all $J \in \mathcal{L}(E)$.
\end{dfn}
It is known that the dual of $\P$ is also a $(q,m)$-polymatroid \cite{Shiromoto2019}.

\begin{eg} \label{eg:cryptomorphism}
  Let $\P = \uniform{1}{2}{3}$.
  All subspaces of dimension at most $1$ are independent spaces; the ambient space $E$ is dependent.
  Thus, the collection of circuits is $\mathscr{C}(\P) = \{ E \}$, and
  the collection of hyperplanes is $\mathscr{H}(\P) = \{ 0 \}$.
  The dual $\P^{\ast}$ is the pair $(E, \rho^{\ast})$, where $\rho^{\ast} = \rho$.
\end{eg}

\begin{lem}\label{lem:add-one-dimensional-space}
  For any $J \leq E$ and one-dimensional subspace $e \leq E$ with $e \cap J=0$,
  it follows that
  \begin{equation*}
    0 \leq \rho(J+e)-\rho(J) \leq m.
  \end{equation*}
\end{lem}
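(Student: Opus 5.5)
The lower bound follows from monotonicity, and the upper bound from submodularity combined with the bound on one-dimensional spaces. Throughout we use only the axioms \ref{r1}--\ref{r3} of Definition~\ref{dfn:qm_polymatroid} with $r=m$.

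\emph{Lower bound.} Since $J \leq J+e$, axiom \ref{r2} gives $\rho(J) \leq \rho(J+e)$, so $\rho(J+e)-\rho(J) \geq 0$.

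\emph{Upper bound.} We apply axiom \ref{r3} to the pair $A=J$, $B=e$. This gives
\begin{equation*}
  \rho(J+e) + \rho(J \cap e) \leq \rho(J) + \rho(e).
\end{equation*}
The hypothesis $e \cap J = 0$ gives $\rho(J\cap e)=\rho(0)$. By \ref{r1}, $0 \leq \rho(0) \leq m\cdot 0 = 0$, so $\rho(0)=0$. Hence $\rho(J+e)-\rho(J) \leq \rho(e)$. Applying \ref{r1} once more, $\rho(e) \leq m\dim e = m$, and the upper bound follows.

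Every step is a direct application of one axiom, so there is no real obstacle. The only point needing care is that $\rho(0)=0$, which must be read off from \ref{r1} rather than assumed.
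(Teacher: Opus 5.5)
Your proof is correct and matches the paper's argument: monotonicity \ref{r2} gives the lower bound, and submodularity \ref{r3} applied to $J$ and $e$, together with $\rho(J\cap e)=\rho(0)=0$ and $\rho(e)\le m$ from \ref{r1}, gives the upper bound. You also spell out why $\rho(0)=0$, which the paper uses without comment in its one-line chain of inequalities.
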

\begin{proof}
  By definition of a $(q, m)$-polymatroid, we obtain
  \begin{equation*}
    0 \leq\rho(J + e) - \rho(J) =  \rho(J + e) + \rho(J \cap e) - \rho(J)  \leq \rho(e) \leq m.\qedhere
  \end{equation*}
\end{proof}

\begin{lem}[{\cite[Lemma~14]{WeightedSubspaceDesign}}]\label{lem:inherit-independency}
  Let $X \leq Y \leq E$.
  If $Y$ is an independent space, then $X$ is independent.
\end{lem}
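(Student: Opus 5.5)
The plan is to argue by contradiction, using submodularity (R3) to move the deficiency of a subspace $X$ up to $Y$. Suppose $Y$ is independent, i.e.\ $\rho(Y) = m\dim Y$, and $X \leq Y$. Choose a complement $Z$ of $X$ in $Y$, so that $X + Z = Y$ and $X \cap Z = 0$, hence $\dim Y = \dim X + \dim Z$.

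Apply (R3) to the pair $X, Z$. Since $\rho(0) = 0$ by (R1), this gives
\begin{equation*}
  m\dim Y = \rho(Y) = \rho(X+Z) + \rho(X\cap Z) \leq \rho(X) + \rho(Z).
\end{equation*}
By (R1) we also have $\rho(X) \leq m\dim X$ and $\rho(Z) \leq m\dim Z$. Therefore
\begin{equation*}
  m\dim X + m\dim Z = m\dim Y \leq \rho(X) + \rho(Z) \leq m\dim X + m\dim Z.
\end{equation*}
Equality must hold throughout. Since each of the two bounds from (R1) is individually an inequality in the same direction, each must be an equality. In particular $\rho(X) = m\dim X$, so $X$ is independent.

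An alternative route is to build $Y$ from $X$ by adding one-dimensional subspaces and apply Lemma~\ref{lem:add-one-dimensional-space} at each step: every increment is at most $m$, so reaching the total $m\dim Y$ forces $\rho(X) = m\dim X$. The complement argument above is shorter and avoids the induction.

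I expect no real obstacle. The only points to state explicitly are that $\rho(0)=0$, which follows from (R1) with $\dim 0 = 0$, and the existence of a complement of $X$ in $Y$, which is standard linear algebra.
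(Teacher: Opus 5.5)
Your proof is correct and complete. Choosing a complement $Z$ of $X$ in $Y$ and applying (R3), with $\rho(0)=0$, gives $m\dim X + m\dim Z \le \rho(X)+\rho(Z)$. Combined with the two upper bounds from (R1), this forces both slacks $m\dim X-\rho(X)$ and $m\dim Z-\rho(Z)$ to vanish.

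The paper itself gives no proof of this lemma; it only imports it from \cite[Lemma~14]{WeightedSubspaceDesign}. So there is no in-paper route to compare against, and your argument supplies a self-contained proof that uses nothing beyond (R1) and (R3).

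Your alternative via Lemma~\ref{lem:add-one-dimensional-space} is also valid and not circular, since that lemma is proved just before from the axioms. It is the same inequality applied one dimension at a time rather than in a single step.
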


\begin{prop}\label{prop:circuit-hyperplane}
  Let $\P$ be a $(q, m)$-polymatroid. Then, the following statement holds:
  \begin{equation*}
    \mathscr{C}^{\ast}(\P) \coloneqq \mathscr{C}(\P^{\ast})=\mathscr{H}(\P)^{\perp}.
  \end{equation*}
\end{prop}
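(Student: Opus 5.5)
We will prove that for every $D \leq E$, $D$ is a circuit of $\P^{\ast}$ if and only if $D^{\perp}$ is a hyperplane of $\P$. Since $J \mapsto J^{\perp}$ is an inclusion-reversing involution on $\mathcal{L}(E)$, this gives $\mathscr{C}(\P^{\ast}) = \mathscr{H}(\P)^{\perp}$.

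The first step is to translate independence in $\P^{\ast}$ into a condition on $\P$. By Definition~\ref{dfn:dual}, $\rho^{\ast}(J) = \rho(J^{\perp}) + m\dim J - \rho(E)$. Hence $J$ is independent in $\P^{\ast}$, i.e.\ $\rho^{\ast}(J) = m\dim J$, if and only if $\rho(J^{\perp}) = \rho(E)$. By \ref{r2}, $\rho(J^{\perp}) \leq \rho(E)$ always holds. So $J$ is dependent in $\P^{\ast}$ exactly when $\rho(J^{\perp}) < \rho(E)$.

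Next we unwind the definition of a circuit. $D$ is a circuit of $\P^{\ast}$ if and only if $D$ is dependent in $\P^{\ast}$ and every proper subspace $D' \lneq D$ is independent in $\P^{\ast}$. By the first step, this means $\rho(D^{\perp}) < \rho(E)$ and $\rho(D'^{\perp}) = \rho(E)$ for all $D' \lneq D$. As $D'$ ranges over the proper subspaces of $D$, $H' \coloneqq D'^{\perp}$ ranges over exactly the subspaces with $D^{\perp} \lneq H' \leq E$, because orthogonal complementation is an inclusion-reversing bijection with $(X^{\perp})^{\perp} = X$ and $\dim X^{\perp} = n - \dim X$ (the standard inner product is nondegenerate on $E$). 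The condition therefore reads $\rho(D^{\perp}) < \rho(E)$ and $\rho(H') = \rho(E)$ for all $H'$ strictly containing $D^{\perp}$. This is precisely Definition~\ref{dfn:hyperplane} for $H = D^{\perp}$.

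The only delicate point is the bijection in the last step. Although $J \cap J^{\perp}$ may be nonzero over $\mathbb{F}_q$, nondegeneracy of the standard form still guarantees that $\perp$ is an inclusion-reversing involution on $\mathcal{L}(E)$, and that is all the argument uses. Lemma~\ref{lem:inherit-independency} is not needed, since the definition of a circuit already quantifies over all proper subspaces.
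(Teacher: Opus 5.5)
Your proof is correct and follows essentially the same route as the paper. Both arguments translate independence in $\P^{\ast}$ into the condition $\rho(J^{\perp}) = \rho(E)$, and then use that $J \mapsto J^{\perp}$ is an inclusion-reversing involution to match the circuit condition with the hyperplane condition. The paper splits this into two separate inclusions and invokes Lemma~\ref{lem:add-one-dimensional-space} to obtain $\rho(H) \leq \rho(E)-1$; your single chain of equivalences shows that this lemma is not actually needed.
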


\begin{proof}
  We take $H\in \mathscr{H}(\P)$ arbitrarily. By Lemma~\ref{lem:add-one-dimensional-space},
  we obtain $\rho(H)\leq \rho(E)-1$. Then,
  \begin{equation*}
    \rho^{\ast}(H^{\perp})
    = \rho(H) + m\dim(H^{\perp}) - \rho(E)
    \leq \rho(E)-1 + m\dim(H^{\perp}) -\rho(E)
    < m\dim(H^{\perp}).
  \end{equation*}
  Hence, $H^{\perp}$ is a dependent space of $\P^{\ast}$.
  For any $J^{\perp}\lneq H^{\perp}$, it satisfies
  \begin{equation*}
    \rho^{\ast}(J^{\perp}) = \rho(J) + m\dim J^{\perp} - \rho(E)
    =m\dim J^{\perp}.
  \end{equation*}
  Therefore, $H^{\perp}$ is a circuit of $\P^{\ast}$.

  Conversely, we take $D \in \mathscr{C}(\P^{\ast})$ arbitrarily and have $\rho^{\ast}(D) < m \dim D$.
  Hence, 
  \begin{equation*}
  \rho(D^\perp)+m\dim D-\rho(E) < m\dim D
  \end{equation*}
  holds, and we obtain $\rho(D^{\perp}) < \rho(E)$.
  For any $H$ with $D^{\perp} \lneq H \leq E$, it holds that $H^{\perp} \lneq D$, so $H^{\perp}$ is independent in $\P^{\ast}$ by $D \in \mathscr{C}(\P^{\ast})$. Therefore, $\rho^{\ast}(H^{\perp})=m\dim H^{\perp}$. By the definition, we have $\rho(H)+ m\dim H^{\perp} - \rho(E)=m\dim H^{\perp}$ and  $\rho(H)=\rho(E)$. Hence, \(D^\perp\in \mathcal H(P)\). Therefore
\(D=(D^\perp)^\perp\in \mathcal H(P)^\perp\).
\end{proof}

\begin{dfn}
  For all $r \in \mathbb{Z}_{\geq 0}$, we define $\supps{\mathcal{C}}{r}$ and
  the family $\minsupps{\mathcal{C}}{r}$ of its minimal members as follows:
  \begin{equation*}
    \supps{\mathcal{C}}{r} \coloneqq \set{ \Supp(\mathcal{D}) \mid \mathcal{D} \in \qbinom{\mathcal{C}}{r}{q} },
    \quad\text{and}\quad
    \minsupps{\mathcal{C}}{r} \coloneqq \Min \supps{\mathcal{C}}{r}.
  \end{equation*}
\end{dfn}

\begin{lem}\label{lem:DCJ}
Let $\mathcal{D}\leq \mathcal{C}$ and $J \leq E$. Then, $\mathcal{D} \leq \mathcal{C}(J)$ if and only if $\Supp(\mathcal{D}) \leq J$.
\end{lem}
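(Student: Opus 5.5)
The argument is a direct unwinding of the definitions, $\mathcal{C}(J)=\{M\in\mathcal{C} \mid \supp(M)\subseteq J\}$ and $\Supp(\mathcal{D})=\sum_{M\in\mathcal{D}}\supp(M)$. The only structural fact needed is that $\Supp(\mathcal{D})$ is the smallest subspace of $E$ containing every $\supp(M)$ with $M\in\mathcal{D}$. This holds because a sum of subspaces is the smallest subspace containing each summand. I will prove the two implications separately.

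For the forward direction, assume $\mathcal{D}\leq\mathcal{C}(J)$. Then every $M\in\mathcal{D}$ lies in $\mathcal{C}(J)$, so $\supp(M)\subseteq J$. Since $J$ is a subspace, it is closed under finite sums. Hence it contains $\sum_{M\in\mathcal{D}}\supp(M)$, which is the set of finite sums of elements of the various $\supp(M)$. This gives $\Supp(\mathcal{D})\leq J$.

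For the converse, assume $\Supp(\mathcal{D})\leq J$. For each $M\in\mathcal{D}$, we have $\supp(M)\leq\Supp(\mathcal{D})\leq J$, since $\supp(M)$ is one of the summands. Moreover $M\in\mathcal{C}$, because $\mathcal{D}\leq\mathcal{C}$. So $M\in\mathcal{C}(J)$, and therefore $\mathcal{D}\subseteq\mathcal{C}(J)$. Both $\mathcal{D}$ and $\mathcal{C}(J)$ are $\mathbb{F}_q$-subspaces; the latter by the remark citing \cite[Lemma~1]{Shiromoto2019}. The inclusion of sets is therefore an inclusion of subspaces, $\mathcal{D}\leq\mathcal{C}(J)$.

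There is no real obstacle here. The one point to state explicitly is that $\mathcal{C}(J)$ is a subspace, so that "$\leq$" in the statement makes sense. This is already supplied by the cited lemma; if one wanted it self-contained, it follows from $\supp(M+N)\subseteq\supp(M)+\supp(N)$ and $\supp(\lambda M)\subseteq\supp(M)$.
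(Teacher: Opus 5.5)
Your proof is correct and follows the paper's argument essentially verbatim: both directions come from unwinding the definitions of $\mathcal{C}(J)$ and $\Supp(\mathcal{D})$, using that $\supp(M)\leq\Supp(\mathcal{D})$ for each $M\in\mathcal{D}$ and that a subspace containing every $\supp(M)$ contains their sum. Your added remark that $\mathcal{C}(J)$ is a subspace is harmless bookkeeping that the paper leaves implicit through its earlier citation.
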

\begin{proof}
  If $\Supp(\mathcal{D}) \leq J$, it is clear that $\supp(M) \leq \Supp(\mathcal{D}) \leq J$ for all $M \in \mathcal{D}$.
  Hence, $\mathcal{D} \leq \mathcal{C}(J)$.
  Conversely, if $\mathcal{D} \leq \mathcal{C}(J)$,
  we have $\supp(M) \leq J$ for any $M \in \mathcal{D}$.
  Hence, it follows that $\Supp(\mathcal{D})=\sum_{M \in \mathcal{D}}\supp(M) \leq J$.
\end{proof}

\begin{lem}\label{lem:mimnimal-support}
For all $r \in \mathbb{Z}_{\geq 0}$,
\begin{equation*}
\minsupps{\mathcal{C}}{r} = \Min\set{J \leq E \mid \dim \mathcal{C}(J) \geq r}.
\end{equation*}
\end{lem}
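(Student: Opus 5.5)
Write $\mathscr{J}_r \coloneqq \set{J \leq E \mid \dim \mathcal{C}(J) \geq r}$. The plan is to show that $\supps{\mathcal{C}}{r}$ and $\mathscr{J}_r$ have the same minimal members. Two facts do the work: every member of $\supps{\mathcal{C}}{r}$ lies in $\mathscr{J}_r$, and every member of $\mathscr{J}_r$ contains some member of $\supps{\mathcal{C}}{r}$. Once these are in place, a standard order-theoretic argument gives $\Min \supps{\mathcal{C}}{r} = \Min \mathscr{J}_r$. Both facts come directly from Lemma~\ref{lem:DCJ} and from the fact that $\mathcal{C}(J)$ is a subspace.

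For the first fact, let $S = \Supp(\mathcal{D})$ with $\dim \mathcal{D} = r$. Lemma~\ref{lem:DCJ}, applied with $J = S$, gives $\mathcal{D} \leq \mathcal{C}(S)$. Hence $\dim \mathcal{C}(S) \geq r$, so $S \in \mathscr{J}_r$.

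For the second fact, let $J \in \mathscr{J}_r$. Since $\mathcal{C}(J)$ is an $\mathbb{F}_q$-subspace of dimension at least $r$, we can choose an $r$-dimensional subcode $\mathcal{D} \leq \mathcal{C}(J)$. Lemma~\ref{lem:DCJ} then gives $\Supp(\mathcal{D}) \leq J$, and $\Supp(\mathcal{D}) \in \supps{\mathcal{C}}{r}$.

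Now we compare minimal elements.

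\emph{Inclusion $\subseteq$.} Take $S \in \minsupps{\mathcal{C}}{r}$. By the first fact, $S \in \mathscr{J}_r$. Suppose $J \in \mathscr{J}_r$ with $J \leq S$. The second fact yields $S' \in \supps{\mathcal{C}}{r}$ with $S' \leq J \leq S$. Minimality of $S$ forces $S' = S$, so $J = S$. Thus $S$ is minimal in $\mathscr{J}_r$.

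\emph{Inclusion $\supseteq$.} Take $J \in \Min \mathscr{J}_r$. The second fact gives $S \in \supps{\mathcal{C}}{r}$ with $S \leq J$. The first fact puts $S$ in $\mathscr{J}_r$, so minimality of $J$ gives $J = S \in \supps{\mathcal{C}}{r}$. If $S'' \in \supps{\mathcal{C}}{r}$ satisfies $S'' \leq J$, then again $S'' \in \mathscr{J}_r$, which forces $S'' = J$. Hence $J$ is minimal in $\supps{\mathcal{C}}{r}$.

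There is no real obstacle here. The only point needing care is the case $r > k$: then both families are empty, and the argument above handles this vacuously. Finite-dimensionality of $E$ ensures that minimal elements exist wherever needed.
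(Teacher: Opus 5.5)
Your proof is correct and follows essentially the same route as the paper. Both directions rest on Lemma~\ref{lem:DCJ}, used exactly as in your two facts: $\mathcal{D} \leq \mathcal{C}(\Supp(\mathcal{D}))$, and any $J$ with $\dim \mathcal{C}(J) \geq r$ contains the support of an $r$-dimensional subcode of $\mathcal{C}(J)$. Your version has two small extras: you package the argument as a general order-theoretic comparison, and in the inclusion $\supseteq$ you explicitly check that $J$ is minimal in $\supps{\mathcal{C}}{r}$, a step the paper leaves implicit.
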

\begin{proof}
If $J \in \minsupps{\mathcal{C}}{r}$, 
then $J=\Supp(\mathcal{D})$ for some $\mathcal{D} \in \qbinom{\mathcal{C}}{r}{q}$. Then, we have $\mathcal{D} \leq \mathcal{C}(\Supp \mathcal{D})=\mathcal{C}(J)$, and so $\dim \mathcal{C}(J)\geq r$. 
Suppose that there exists $J'\lneq J$ such that $\dim C(J')\ge r$.
Choose $D'\in \left[\begin{smallmatrix} C(J')\\ r\end{smallmatrix}\right]_q$.
Then, by Lemma~\ref{lem:DCJ}, 
$\Supp(D')\le J'\lneq J$, while
$\Supp(D')\in \supps{C}
{r}$, contradicting the minimality of
$J$ in $\supps{C}{r}$.

Conversely, 
assume $J \leq E$ is minimal under the condition $\dim\mathcal{C}(J) \geq r$. 
We take $\mathcal{D} \in \qbinom{\mathcal{C}(J)}{r}{q}$ arbitrarily; then $\Supp(\mathcal{D}) \leq J$ by Lemma~\ref{lem:DCJ}. 
Considering $\dim \mathcal{C}(\Supp(\mathcal{D})) \geq \dim \mathcal{D} \geq r$ and the minimality of $J$, we have $\Supp(\mathcal{D})=J$. 
Hence, we conclude that $J \in \minsupps{\mathcal{C}}{r}$.
\end{proof}

\begin{dfn} \label{dfn:truncation}
For an integer $t$ with $0 \leq t \leq \rho(E)$, we define $\rho^{(t)}\colon \mathcal{L}(E) \to \mathbb{Z}_{\geq 0}$ as
\begin{equation*}
\rho^{(t)}(J)\coloneqq\min\set{\rho(J), \, \rho(E)-t}\qquad(J \in \mathcal{L}(E)).
\end{equation*}
We call the pair $P^{(t)} \coloneqq (E, \rho^{(t)})$ the \emph{$t$-th truncation} of $\P$.
\end{dfn}

\begin{prop}
For an integer $t$ with $0 \leq t \leq \rho(E)$,
$\P^{(t)}$ is indeed a $(q, m)$-polymatroid.
\end{prop}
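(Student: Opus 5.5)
We verify the three axioms \ref{r1}--\ref{r3} for $\rho^{(t)}=\min\{\rho,c\}$, where we put $c\coloneqq\rho(E)-t$. Since $0\le t\le\rho(E)$, we have $c\ge 0$.

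Axioms \ref{r1} and \ref{r2} are immediate. Nonnegativity follows from $\rho\ge 0$ and $c\ge 0$. The upper bound follows from $\rho^{(t)}(A)\le\rho(A)\le m\dim A$. Monotonicity holds because $x\mapsto\min\{x,c\}$ is nondecreasing and $\rho$ is monotone.

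The substance is submodularity \ref{r3}. Fix $A,B\le E$; by symmetry we may assume $\rho(A)\le\rho(B)$. We split into three cases according to where $c$ lies relative to $\rho(A)$ and $\rho(B)$.

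\emph{Case $\rho(B)\le c$.} Then $\rho^{(t)}(A)=\rho(A)$ and $\rho^{(t)}(B)=\rho(B)$. Since $\rho^{(t)}\le\rho$ everywhere, the inequality follows from \ref{r3} for $\rho$.

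\emph{Case $\rho(A)\ge c$.} Then both $\rho^{(t)}(A)$ and $\rho^{(t)}(B)$ equal $c$. The left-hand side is at most $c+c$, since each term $\rho^{(t)}(A+B)$ and $\rho^{(t)}(A\cap B)$ is at most $c$.

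\emph{Case $\rho(A)<c<\rho(B)$.} The right-hand side is $\rho(A)+c$. We bound $\rho^{(t)}(A+B)\le c$. By monotonicity, since $A\cap B\le A$, we have $\rho^{(t)}(A\cap B)\le\rho(A\cap B)\le\rho(A)$. Adding these gives the claim. Only this case needs care, and it uses only \ref{r2}, not \ref{r3}, for $\rho$.

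This gives \ref{r3}, so $\P^{(t)}$ is a $(q,m)$-polymatroid. There is no real obstacle beyond organizing this case split.
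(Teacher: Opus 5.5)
Your proof is correct and follows the paper's argument: the paper uses the same three-case split around $c=\rho(E)-t$, and handles the mixed case by $\rho^{(t)}(A+B)\le c$ together with monotonicity applied to $A\cap B$. Assuming $\rho(A)\le\rho(B)$ at the outset, and checking that $c\ge 0$ for \ref{r1}, are only cosmetic refinements of the paper's ``exactly one of them is at least $c$'' organization.
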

\begin{proof}
It is clear that $\rho^{(t)}$ satisfies \ref{r1} and \ref{r2}. 
Put $c \coloneqq \rho(E)-t$. To show \ref{r3}, let $A,B \leq E$. 
If $\rho(A)\leq c$ and $\rho(B)\leq c$, we have $\rho^{(t)}(A)=\rho(A)$ and $\rho^{(t)}(B)=\rho(B)$. It follows that 
\begin{equation*}
\rho^{(t)}(A+B)+\rho^{(t)}(A\cap B)
\leq
\rho(A+B)+\rho(A\cap B)
\leq
\rho(A)+\rho(B)
=
\rho^{(t)}(A)+\rho^{(t)}(B).
\end{equation*}
If $\rho(A)\geq c$ and $\rho(B)\geq c$, then $\rho^{(t)}(A)=\rho^{(t)}(B)=c$. Hence,  
\begin{equation*}
\rho^{(t)}(A+B)+\rho^{(t)}(A\cap B)\leq 2c
=
\rho^{(t)}(A)+\rho^{(t)}(B).
\end{equation*}

Finally, assume that exactly one of $\rho(A),\rho(B)$ is at least $c$. By symmetry, we may assume that $\rho(A)\geq c$ and $\rho(B)<c$. Then $\rho^{(t)}(A)=c$ and $\rho^{(t)}(B)=\rho(B)$. Moreover,
\begin{equation*}
\rho^{(t)}(A+B)\leq c
\end{equation*}
and, since $A\cap B \leq B$,
\begin{equation*}
\rho^{(t)}(A\cap B)\leq \rho(A\cap B)\leq \rho(B)=\rho^{(t)}(B).
\end{equation*}
Hence
\begin{equation*}
\rho^{(t)}(A+B)+\rho^{(t)}(A\cap B)
\leq
c+\rho(B)
=
\rho^{(t)}(A)+\rho^{(t)}(B).
\end{equation*}
Therefore $\rho^{(t)}$ satisfies {\rm (R3)}, and thus $\P^{(t)}$ is a $(q,m)$-polymatroid.
\end{proof}
The following theorem is an analog of \cite[Theorem~3]{HigherSupportMatroid} for matroids and linear codes.

\begin{thm}\label{thm:cocircuits-minimalsupports}
For $1 \leq r \leq k$,
\begin{equation*}
  \minsupps{\mathcal{C}}{r}=\mathscr{C}^{\ast}(\P_{\mathcal{C}}^{(r-1)}).
\end{equation*}
\end{thm}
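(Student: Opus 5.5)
By Proposition~\ref{prop:circuit-hyperplane} applied to the truncation $\P_{\mathcal{C}}^{(r-1)}$, we have $\mathscr{C}^{\ast}(\P_{\mathcal{C}}^{(r-1)})=\mathscr{H}(\P_{\mathcal{C}}^{(r-1)})^{\perp}$. By Lemma~\ref{lem:mimnimal-support}, $\minsupps{\mathcal{C}}{r}=\Min\set{J\le E \mid \dim\mathcal{C}(J)\ge r}$. The theorem therefore reduces to showing that $H\mapsto H^{\perp}$ is a bijection from $\mathscr{H}(\P_{\mathcal{C}}^{(r-1)})$ onto $\Min\set{J \mid \dim\mathcal{C}(J)\ge r}$. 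Since $\perp$ is an inclusion-reversing involution on $\mathcal{L}(E)$, it suffices to show
\begin{equation*}
  \mathscr{H}(\P_{\mathcal{C}}^{(r-1)})=\Max\set{H\le E \mid \dim\mathcal{C}(H^{\perp})\ge r}.
\end{equation*}

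Write $\rho=\rho_{\mathcal{C}}$, so $\rho(E)=k-\dim\mathcal{C}(0)=k$, because a matrix whose column space is $0$ is the zero matrix. Put $c=\rho(E)-(r-1)=k-r+1$. The truncated rank of $P^{(r-1)}$ is $\rho^{(r-1)}(E)=c$. For any $H\le E$ we have $\rho^{(r-1)}(H)<c$ if and only if $\rho(H)<c$. This holds if and only if $k-\dim\mathcal{C}(H^{\perp})\le k-r$, that is, $\dim\mathcal{C}(H^{\perp})\ge r$. So the condition ``$\rho^{(r-1)}(H)<\rho^{(r-1)}(E)$'' is exactly the condition $\dim\mathcal{C}(H^{\perp})\ge r$. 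Also $\rho^{(r-1)}(H')=c$ holds precisely when $\dim\mathcal{C}(H'^{\perp})<r$.

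By Definition~\ref{dfn:hyperplane}, $H$ is a hyperplane of $\P^{(r-1)}$ if and only if $\rho^{(r-1)}(H)<c$ and every $H'\gneq H$ has $\rho^{(r-1)}(H')=c$. By the translation above, this says $H$ satisfies $\dim\mathcal{C}(H^{\perp})\ge r$ and no proper superspace does, i.e.\ $H$ is a maximal member of $\set{H \mid \dim\mathcal{C}(H^\perp)\ge r}$. Applying $\perp$ turns maximal $H$ into minimal $J=H^{\perp}$, which gives the claim. The assumption $1\le r\le k$ guarantees $0\le r-1\le\rho(E)$, so the truncation is defined, and that the family is nonempty since $\dim\mathcal{C}(E)=k\ge r$.

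The main point needing care is the bookkeeping in the middle step: checking that the strict inequality $\rho(H)<k-r+1$ corresponds exactly to $\dim\mathcal{C}(H^{\perp})\ge r$, with no off-by-one error. One should also note that Proposition~\ref{prop:circuit-hyperplane} is applied to $\P^{(r-1)}$ itself, i.e.\ $\mathscr{C}^{\ast}(\P^{(r-1)})$ means circuits of $(\P^{(r-1)})^{\ast}$, and not to the truncation of the dual.
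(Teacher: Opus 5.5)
Your proposal is correct and follows the same route as the paper. Both proofs combine Lemma~\ref{lem:mimnimal-support} with Proposition~\ref{prop:circuit-hyperplane} applied to $\P_{\mathcal{C}}^{(r-1)}$, identify $\mathscr{H}(\P_{\mathcal{C}}^{(r-1)})$ with $\Max\set{H\le E \mid \rho_{\mathcal{C}}(H)\le k-r}=\Max\set{H\le E\mid \dim\mathcal{C}(H^{\perp})\ge r}$, and then dualize via $\perp$; your check that $\rho^{(r-1)}(H)<\rho^{(r-1)}(E)$ exactly when $\dim\mathcal{C}(H^{\perp})\ge r$ (using $\rho_{\mathcal{C}}(E)=k$) spells out the step the paper asserts without comment.
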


\begin{proof}
By Lemma~\ref{lem:mimnimal-support} and Proposition~\ref{prop:circuit-hyperplane}, the following equations hold.
\begin{align*}
\Min \left( \set{J \leq E\mid \dim \mathcal{C}(J) \geq r} \right)
&=\Min \left( \set{J^{\perp} \leq E\mid \dim \mathcal{C}(J^{\perp}) \geq r} \right) \\
&=\Min \left( \set{J^{\perp} \leq E\mid \dim \mathcal{C} -\dim \mathcal{C}(J^{\perp}) \leq \dim \mathcal{C} -r} \right) \\
&=\Min \left( \set{J^{\perp} \leq E\mid \rho_{\mathcal{C}}(J) \leq \dim \mathcal{C} -r} \right) \\
&=\left( \Max \left( \Bigl\{ J \leq E\mid \rho_{\mathcal{C}}(J) \leq \dim \mathcal{C} -r \Bigr\} \right) \right)^{\perp}\\
&=\mathscr{H}(\P_{\mathcal{C}}^{(r-1)})^{\perp} = \mathscr{C}^{\ast}(\P_{\mathcal{C}}^{(r-1)}).\qedhere
\end{align*}
\end{proof}

We define the \emph{girth} of a $(q, m)$-polymatroid
as the minimum dimension of the circuits:
\begin{equation*}
  g(P) \coloneqq \begin{cases}
    \infty & \text{if } \mathscr{C}(\P) = \emptyset, \\
    \min \{ \dim D : D \in \mathscr{C}(\P) \} & \text{otherwise}.
  \end{cases}
\end{equation*}

\begin{prop}\label{prop:circuitsize}
Unless $\mathscr{C}(P) = \emptyset$,
\begin{equation*}
  g(\P)
  \leq
  \left\lfloor \frac{\rho(E)}{m}\right \rfloor + 1.
\end{equation*}
\end{prop}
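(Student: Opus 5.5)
Set $s \coloneqq \lfloor \rho(E)/m \rfloor + 1$ and write $\rho(E) = ms' + r_0$ with $0 \le r_0 < m$, so that $s = s'+1$. The plan is to produce a circuit of dimension at most $s$. We do this by showing that every subspace of dimension $s$ (if such exists) is dependent, and then descending to a minimal dependent subspace.

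\textbf{Step 1: subspaces of dimension $s$ are dependent.} Take any $A \le E$ with $\dim A = s$. By \ref{r2}, $\rho(A) \le \rho(E) \le m s' + r_0 < m(s'+1) = m\dim A$. Hence $A$ is dependent.

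\textbf{Step 2: such subspaces exist.} We need $s \le n$. Since $\mathscr{C}(\P) \neq \emptyset$, some dependent space $D$ exists. Its dimension $d$ satisfies $d \le n$, and we can choose a $1$-dimensional subspace $e \le D$ with $e \cap J = 0$ for a chosen complement $J$. Applying Lemma~\ref{lem:add-one-dimensional-space} repeatedly along a flag $0 = J_0 < J_1 < \dots < J_n = E$ gives $\rho(E) \le \rho(D) + m(n-d)$, where the flag is chosen to pass through $D$. Since $D$ is dependent, $\rho(D) < md$, so $\rho(E) < mn$. Therefore $\lfloor \rho(E)/m\rfloor \le n-1$, that is, $s \le n$. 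This is the only point where the hypothesis $\mathscr{C}(\P)\neq\emptyset$ enters. Without it one could have $\rho(E) = mn$ with no dependent spaces at all, which is exactly the case the statement excludes.

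\textbf{Step 3: extract a circuit.} Pick $A$ of dimension $s$; it is dependent by Step 1. Among the dependent subspaces of $A$, choose one, $D'$, of minimal dimension. Every proper subspace of $D'$ has smaller dimension and so is independent, which makes $D'$ a circuit. Hence $g(\P) \le \dim D' \le s$. (Lemma~\ref{lem:inherit-independency} is consistent with this argument but is not needed, since minimality of dimension suffices.)

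The only mildly delicate step is Step 2, namely ensuring $s \le n$. The telescoping bound along a flag through $D$ handles it cleanly.
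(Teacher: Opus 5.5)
Your proof is correct and follows the paper's argument. Both take a subspace of dimension $\lfloor \rho(E)/m\rfloor+1$, note that it is dependent because $m\dim D>\rho(E)\ge\rho(D)$, and extract a circuit inside it. The only addition is that you justify $\rho(E)<mn$, which the paper simply asserts; your flag/telescoping argument works, though it also follows at once from Lemma~\ref{lem:inherit-independency}, since an independent $E$ would make every subspace independent. Your aside about choosing $e\le D$ with $e\cap J=0$ for a complement $J$ is unnecessary and can be dropped.
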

\begin{proof}
Since $\rho(E) < m\dim E = mn$ unless $\mathscr{C}(\P) = \emptyset$,
we can take $D \in \qbinom{E}{\left\lfloor \rho(E)/m\right\rfloor+1}{q}$. Hence, 
\begin{equation*}
m\dim D = m \left(\left\lfloor \frac{\rho(E)}{m}\right\rfloor + 1\right) > \rho(E) \geq \rho(D).
\end{equation*}
Therefore, there exists a circuit $D'$ contained in $D$. Thus we obtain 
\begin{equation*}
g(P)\leq \dim D' \leq \dim D = \left\lfloor \frac{\rho(E)}{m}\right\rfloor + 1. \qedhere
\end{equation*}
\end{proof}

We call
\begin{equation*}
  d_{\mathcal{C}}^{(r)}
  \coloneqq
  \begin{cases}
    \infty & \text{if } \supps{\mathcal{C}}{r} = \emptyset, \\
    \min\set{\dim D \mid D \in \supps{\mathcal{C}}{r}} & \text{otherwise},
  \end{cases}
\end{equation*}
the \emph{$r$-th generalized minimum rank-weight} of $\mathcal{C}$. 

\begin{prop}\label{prop:girth-distance}
For all integers $1 \leq r \leq \dim \mathcal{C}$,
\begin{equation*}
  g\left( \left(P_{\mathcal{C}}^{(r-1)} \right)^{\ast}\right)
  =d_{\mathcal{C}}^{(r)}.
\end{equation*}
\end{prop}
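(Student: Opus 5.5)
We reduce the statement to Theorem~\ref{thm:cocircuits-minimalsupports}, which gives $\minsupps{\mathcal{C}}{r}=\mathscr{C}^{\ast}(\P_{\mathcal{C}}^{(r-1)})=\mathscr{C}\bigl((\P_{\mathcal{C}}^{(r-1)})^{\ast}\bigr)$ for $1\le r\le k$. By the definition of the girth, $g\bigl((\P_{\mathcal{C}}^{(r-1)})^{\ast}\bigr)$ is the minimum of $\dim D$ over $D\in\minsupps{\mathcal{C}}{r}$, or $\infty$ if this family is empty. So it suffices to show two things. First, $\supps{\mathcal{C}}{r}$ and $\minsupps{\mathcal{C}}{r}$ are both nonempty. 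Second, the minimum dimension over $\supps{\mathcal{C}}{r}$ equals the minimum dimension over its minimal members.

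For nonemptiness: since $1\le r\le k=\dim\mathcal{C}$, the Grassmannian $\qbinom{\mathcal{C}}{r}{q}$ is nonempty, so $\supps{\mathcal{C}}{r}\neq\emptyset$. Because $\mathcal{L}(E)$ is finite, every member of $\supps{\mathcal{C}}{r}$ contains a minimal member, so $\minsupps{\mathcal{C}}{r}\neq\emptyset$ as well. In particular the circuit family of $(\P_{\mathcal{C}}^{(r-1)})^{\ast}$ is nonempty and the girth is finite. Here we also use that $0\le r-1\le \rho_{\mathcal{C}}(E)$, so the truncation is defined. This holds because $\rho_{\mathcal{C}}(E)=k-\dim\mathcal{C}(0)=k$, as only the zero matrix has support $0$.

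For equality of the minima: the inequality $d_{\mathcal{C}}^{(r)}\le \min\{\dim D\mid D\in\minsupps{\mathcal{C}}{r}\}$ is immediate, since $\minsupps{\mathcal{C}}{r}\subseteq\supps{\mathcal{C}}{r}$. Conversely, let $S\in\supps{\mathcal{C}}{r}$ attain $d_{\mathcal{C}}^{(r)}$. Then $S$ contains some $D\in\minsupps{\mathcal{C}}{r}$ with $D\le S$, so $\dim D\le \dim S$, which gives the reverse inequality. (In fact $S$ is itself minimal, since any proper subspace has strictly smaller dimension.)

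The main obstacle is modest. One must check that Theorem~\ref{thm:cocircuits-minimalsupports} applies in the given range of $r$, namely that the truncation index $r-1$ lies in $[0,\rho_{\mathcal{C}}(E)]$. This is exactly the computation $\rho_{\mathcal{C}}(E)=k$ above. One must also handle the $\infty$ conventions consistently, which the nonemptiness argument takes care of.
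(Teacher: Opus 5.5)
Your proof is correct and follows the paper's argument. Like the paper, you combine Theorem~\ref{thm:cocircuits-minimalsupports} with the fact that the minimum dimension over $\supps{\mathcal{C}}{r}$ equals the minimum over its minimal members, which the paper simply calls ``clear''; your extra checks (nonemptiness, finiteness of the girth, and $\rho_{\mathcal{C}}(E)=k$ so that $\P_{\mathcal{C}}^{(r-1)}$ is defined) fill in details the paper leaves implicit.
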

\begin{proof}
It is clear that $\min\set{\dim D \mid D \in \supps{\mathcal{C}}{r}}=\min\set{\dim D \mid D \in \minsupps{\mathcal{C}}{r}}$. 
Hence, we obtain the assertion from 
Theorem~\ref{thm:cocircuits-minimalsupports} and the definition of $g\left( \left(P_{\mathcal{C}}^{(r-1)} \right)^{\ast}\right)$ and $d_{\mathcal{C}}^{(r)}$.
\end{proof}

The Singleton-type bound for the generalized minimum rank-weights is known from \cite[Theorem~30.5]{RAVAGNANI2016} and \cite[Theorem~37]{demipoly2020}. 

\begin{cor}[{\cite[Theorem~30.5]{RAVAGNANI2016}, \cite[Theorem~37]{demipoly2020}}]
For all integers $1 \leq r \leq \dim \mathcal{C}$, the following bound holds:
\begin{equation*}
d_{\mathcal{C}}^{(r)} \leq n -\left\lfloor\frac{\dim{\mathcal{C}} - r}{m} \right\rfloor.
\end{equation*}
\end{cor}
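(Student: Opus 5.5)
The plan is to combine Proposition~\ref{prop:girth-distance} with Proposition~\ref{prop:circuitsize}, applied to the dual of the truncation $Q \coloneqq \bigl(\P_{\mathcal{C}}^{(r-1)}\bigr)^{\ast}$. By Proposition~\ref{prop:girth-distance}, $d_{\mathcal{C}}^{(r)} = g(Q)$. So the task reduces to two things: showing that $Q$ has a circuit, and computing the rank of $Q$.

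\textbf{Step 1: the rank of $Q$.} Write $\rho_r \coloneqq \rho_{\mathcal{C}}^{(r-1)}$. Since $\mathcal{C}(0)=0$, we have $\rho_{\mathcal{C}}(E)=k$, so $\rho_r(E)=k-r+1$ and $\rho_r(0)=0$. By Definition~\ref{dfn:dual},
\begin{equation*}
\rho_r^{\ast}(E) = \rho_r(0) + mn - \rho_r(E) = mn - k + r - 1 .
\end{equation*}

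\textbf{Step 2: $Q$ has a circuit.} Since $r \leq k$, the set $\supps{\mathcal{C}}{r}$ is nonempty, and hence so is $\minsupps{\mathcal{C}}{r}$. By Theorem~\ref{thm:cocircuits-minimalsupports}, $\minsupps{\mathcal{C}}{r}$ equals the set of circuits of $Q$, so $\mathscr{C}(Q)\neq\emptyset$.

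\textbf{Step 3: apply the girth bound.} Proposition~\ref{prop:circuitsize} now gives
\begin{equation*}
d_{\mathcal{C}}^{(r)} \leq \left\lfloor \frac{mn-k+r-1}{m}\right\rfloor + 1 = n + \left\lfloor \frac{r-1-k}{m}\right\rfloor + 1 .
\end{equation*}

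\textbf{Step 4: the floor manipulation.} It remains to show
\begin{equation*}
\left\lfloor \frac{r-1-k}{m}\right\rfloor + 1 = -\left\lfloor \frac{k-r}{m}\right\rfloor .
\end{equation*}
Set $a = k-r \geq 0$. The identity $\lfloor (-a-1)/m\rfloor = -\lceil (a+1)/m\rceil$ together with $\lceil (a+1)/m\rceil = \lfloor a/m\rfloor + 1$ gives exactly this. Hence $d_{\mathcal{C}}^{(r)} \leq n - \lfloor (k-r)/m\rfloor$.

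The main obstacle is only bookkeeping: getting the rank of the dual truncation right, and checking the nonemptiness hypothesis of Proposition~\ref{prop:circuitsize}. Both follow directly from the cited results.
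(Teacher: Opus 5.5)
Your proposal is correct and matches the paper's proof: the paper also combines Proposition~\ref{prop:girth-distance} with Proposition~\ref{prop:circuitsize} applied to $\bigl(\P_{\mathcal{C}}^{(r-1)}\bigr)^{\ast}$, whose rank is $nm-(\dim\mathcal{C}-(r-1))$, and then simplifies the floor. You additionally check that this dual truncation has a circuit, which Proposition~\ref{prop:circuitsize} requires, and you verify the floor identity; the paper leaves both steps implicit.
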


\begin{proof}
  By Propositions~\ref{prop:girth-distance} and \ref{prop:circuitsize},
  \begin{equation*}
    d_{\mathcal{C}}^{(r)}
    = g\left( \left( P_{\mathcal{C}}^{(r-1)} \right)^{\ast} \right)
    \leq
    \left\lfloor \frac{nm - \bigl( \dim \mathcal{C} - (r - 1) \bigr)}{m} \right\rfloor + 1
=  n -\left\lfloor\frac{\dim{\mathcal{C}} - r}{m} \right\rfloor. \qedhere
\end{equation*}
\end{proof}

\subsection{Higher Support Distributions and $(q, m)$-Polymatroids}

In this section, we show that, for a given $k$-dimensional rank-metric code $\mathcal{C} \leq \Mat{n}{m}{\mathbb{F}_{q}}$,
the associated $(q, m)$-polymatroid and the family of all higher support distributions
determine each other.
As an application, we give another proof of \cite[Theorem 3.8]{GeneralizedRankWeight}
in terms of $(q, m)$-polymatroids.

\begin{dfn}
  The \emph{$r$-th higher support distribution} of $\mathcal{C}$ is the function $\A[r]{\mathcal{C}} \colon \mathcal{L}(E) \to \mathbb{Z}_{\geq 0}$ defined as
  \begin{equation*}
    \A[r]{\mathcal{C}}{W} \coloneqq \abs{\set{\mathcal{D} \in \qbinom{\mathcal{C}}{r}{q} \mid \Supp(\mathcal{D}) = W }}
  \end{equation*}
  for all $W \in \mathcal{L}(E)$. We also define the function $\B[r]{\mathcal{C}} \colon \mathcal{L}(E) \to \mathbb{Z}_{\geq 0}$ as follows:
  \begin{equation*}
    \B[r]{\mathcal{C}}{W} \coloneqq \abs{\set{\mathcal{D} \in \qbinom{\mathcal{C}}{r}{q}\mid\Supp(\mathcal{D}) \leq W }}
  \end{equation*}
  for all $W \in \mathcal{L}(E)$.
\end{dfn}

\begin{lem} \label{lem:B-gaussian}
  For every subspace $J \leq E$,
  \begin{equation*}
    \B[r]{\mathcal{C}}(J) = \qbinom{\dim\mathcal{C}(J)}{r}{q}.
  \end{equation*}
\end{lem}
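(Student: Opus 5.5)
The argument is a direct bijection followed by a standard count. By definition, $\B[r]{\mathcal{C}}(J)$ counts the $r$-dimensional subcodes $\mathcal{D}$ of $\mathcal{C}$ with $\Supp(\mathcal{D}) \leq J$. By Lemma~\ref{lem:DCJ}, for a subcode $\mathcal{D} \leq \mathcal{C}$ the condition $\Supp(\mathcal{D}) \leq J$ is equivalent to $\mathcal{D} \leq \mathcal{C}(J)$. Hence the set being counted is
\begin{equation*}
  \set{\mathcal{D} \in \qbinom{\mathcal{C}}{r}{q} \mid \mathcal{D} \leq \mathcal{C}(J)} = \qbinom{\mathcal{C}(J)}{r}{q},
\end{equation*}
the $r$-Grassmannian of the $\mathbb{F}_q$-vector space $\mathcal{C}(J)$.

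For this identification to make sense, $\mathcal{C}(J)$ must be an $\mathbb{F}_q$-subspace of $\mathcal{C}$. This is the remark following the definition of $\mathcal{C}(J)$ (cf.\ \cite[Lemma~1]{Shiromoto2019}). Concretely, $\supp(\lambda M + \mu N) \leq \supp(M) + \supp(N)$, since the column space of a linear combination lies in the sum of the column spaces. Conversely, any $r$-subspace of $\mathcal{C}(J)$ is an $r$-subspace of $\mathcal{C}$, so the two sets coincide.

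Finally, the number of $r$-dimensional subspaces of an $\mathbb{F}_q$-space of dimension $d = \dim \mathcal{C}(J)$ is the Gaussian coefficient $\qbinom{d}{r}{q}$. This holds for $0 \leq r \leq d$. For $r > d$ both sides are $0$: the product formula contains the factor $q^{d-d}-1 = 0$, and no such subspace exists.

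There is no real obstacle in this argument. The only point needing care is checking that the extended definition of the $q$-binomial gives the value $0$ when $r > \dim\mathcal{C}(J)$, and the convention for $r = 0$, where the empty product gives $1$, matching the single zero subcode.
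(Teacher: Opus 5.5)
Your proposal is correct and follows essentially the same route as the paper: you use Lemma~\ref{lem:DCJ} to identify the counted set with the $r$-Grassmannian of $\mathcal{C}(J)$ and then count it by the Gaussian coefficient. Your extra checks, that $\mathcal{C}(J)$ is a subspace and that the extended $q$-binomial vanishes for $r > \dim \mathcal{C}(J)$, are correct and simply make explicit what the paper leaves implicit.
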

\begin{proof}
  By Lemma~\ref{lem:DCJ}, we have
  \begin{align*}
    \B[r]{\mathcal{C}}(J) &= \abs{\{ \mathcal{D} \leq \mathcal{C} \mid \dim \mathcal{D} = r, \, \mathcal{D} \leq \mathcal{C}(J)  \}}
    = \abs{\{ \mathcal{D} \leq \mathcal{C}(J) \mid \dim \mathcal{D} = r \}} \\
    &= \abs{\qbinom{\mathcal{C}(J)}{r}{q}} = \qbinom{\dim \mathcal{C}(J)}{r}{q}. \qedhere
  \end{align*}
\end{proof}

\begin{rem} \label{rem:relation_BCR_B}
  In \cite[Definition 3.3]{GeneralizedRankWeight}, the equality in Lemma~\ref{lem:B-gaussian}
  is taken as the \emph{definition} of $\B[r]{\mathcal{C}}{J}$.
  Thus our $\B[r]{\mathcal{C}}{J}$ coincides with their $B_{J}^{(r)}(\mathcal{C}, c)$ (up to the obvious notational conventions).
\end{rem}

\begin{thm} \label{thm:equivalence_A_P}
  \begin{enumerate}[label=\textup{(\arabic*)}, itemsep=0ex]
    \item For every integer $0 \leq r \leq k$ and subspace $W \leq E$,
      \begin{equation*}
        \A[r]{\mathcal{C}}{W} = \sum_{J \leq W}\mu(J, W)\,\qbinom{\rho_{\mathcal{C}}(E)-\rho_{\mathcal{C}}(J^\perp)}{r}{q},
      \end{equation*}
      where we denote by $\mu\colon \mathcal{L}(E) \times \mathcal{L}(E) \to \mathbb{Z}$ the Möbius function of the subspace lattice $\mathcal L(E)$ defined as
      \begin{equation}
        \mu(K,J)\coloneqq
        \begin{cases}
            (-1)^{\dim J-\dim K}q^{\binom{\dim J-\dim K}{2}} & (K \leq J \leq E),\\
            0 & (K \nleq J).
        \end{cases}
    \end{equation}
      In particular, the rank function $\rho_{C}$ \textup{(}hence $\P_{\mathcal{C}}$ and
      $R_{\P_\mathcal{C}}$\textup{)} determines the full family
      $\{ \A[r]{\mathcal{C}} \}_{r=0}^{k}$.
    \item Conversely, the distribution $\A[1]{\mathcal{C}}$ determines $\rho_{C}$ uniquely.
      More precisely, for every $J \leq E$, it holds that
      \begin{equation*}
        \rho_{\mathcal{C}}(J)=\dim \mathcal{C} - \log_{q}\!\left(1+(q-1)\sum_{W\le J^{\perp}}\A[1]{\mathcal{C}}(W)\right).
      \end{equation*}
  \end{enumerate}
  Consequently, the associated $(q,m)$-polymatroid $\P_{\mathcal{C}}$ and
  the family of higher support distributions $\{ \A[r]{\mathcal{C}} \}_{r=0}^{k}$ determine each other.
\end{thm}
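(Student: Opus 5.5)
The plan is to apply Möbius inversion on the subspace lattice $\mathcal{L}(E)$ to the cumulative counts $\B[r]{\mathcal{C}}$, and then express everything through $\rho_{\mathcal{C}}$.

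\textbf{Part (1).} Every $r$-dimensional subcode $\mathcal{D}$ has a unique higher rank-support. Hence for every $W \leq E$
\begin{equation*}
  \B[r]{\mathcal{C}}{W} = \sum_{J \leq W} \A[r]{\mathcal{C}}{J}.
\end{equation*}
Möbius inversion over $\mathcal{L}(E)$, whose Möbius function is the $\mu$ given in the statement (standard for the subspace lattice), turns this into
\begin{equation*}
  \A[r]{\mathcal{C}}{W} = \sum_{J \leq W} \mu(J,W)\, \B[r]{\mathcal{C}}{J}.
\end{equation*}
By Lemma~\ref{lem:B-gaussian}, $\B[r]{\mathcal{C}}{J} = \qbinom{\dim \mathcal{C}(J)}{r}{q}$. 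It remains to rewrite $\dim \mathcal{C}(J)$ via the rank function. Proposition~\ref{prop:qm_polymatroid_from_rank_metric_code} applied to $J^{\perp}$, together with $(J^{\perp})^{\perp} = J$, gives
\begin{equation*}
  \dim \mathcal{C}(J) = \dim \mathcal{C} - \rho_{\mathcal{C}}(J^{\perp}).
\end{equation*}
We also need $\rho_{\mathcal{C}}(E) = \dim \mathcal{C}$. This holds because $\mathcal{C}(E^{\perp}) = \mathcal{C}(0)$ consists only of the zero matrix, since a matrix with zero column space is zero. Substituting both identities yields the formula in (1). Since $R_{\P_{\mathcal{C}}}$ determines $\rho_{\mathcal{C}}$ only up to the lattice data, I will phrase the ``in particular'' clause through $\rho_{\mathcal{C}}$ itself.

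\textbf{Part (2).} Here I count codewords instead of subcodes. Every nonzero $M \in \mathcal{C}$ spans the one-dimensional subcode $\langle M \rangle$, and $\Supp(\langle M\rangle) = \supp(M)$ because scalar multiples have the same column space. Each one-dimensional subcode contains exactly $q-1$ nonzero codewords. So for any $K \leq E$, the number of codewords $M \in \mathcal{C}$ with $\supp(M) \leq K$ is
\begin{equation*}
  \abs{\mathcal{C}(K)} = q^{\dim \mathcal{C}(K)} = 1 + (q-1)\sum_{W \leq K} \A[1]{\mathcal{C}}{W}.
\end{equation*}
Taking $K = J^{\perp}$ and $\log_q$ gives $\dim \mathcal{C}(J^{\perp})$. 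Plugging this into the definition of $\rho_{\mathcal{C}}$ gives the formula in (2). The value of $\dim \mathcal{C}$ is itself recovered from $\A[1]{\mathcal{C}}$ by the case $K = E$, so $\rho_{\mathcal{C}}$ is determined by $\A[1]{\mathcal{C}}$ alone.

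The mutual determination then follows by combining (1) and (2). I expect no real obstacle. The only points needing care are confirming the Möbius function of $\mathcal{L}(E)$ (I will cite the standard $q$-analog of inclusion–exclusion) and the bookkeeping with $\perp$ and $\rho_{\mathcal{C}}(E) = \dim \mathcal{C}$.
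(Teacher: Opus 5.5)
Your proposal is correct and follows the paper's proof essentially step for step. For (1) you use M\"{o}bius inversion of $B_{\mathcal{C}}^{(r)}(W)=\sum_{J\le W}A_{\mathcal{C}}^{(r)}(J)$ together with Lemma~\ref{lem:B-gaussian} and $\dim\mathcal{C}(J)=\rho_{\mathcal{C}}(E)-\rho_{\mathcal{C}}(J^{\perp})$. For (2) your direct count $q^{\dim\mathcal{C}(K)}=1+(q-1)\sum_{W\le K}A_{\mathcal{C}}^{(1)}(W)$ is just the case $r=1$ of Lemma~\ref{lem:B-gaussian} rewritten.

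Your decision to state the ``in particular'' clause through $\rho_{\mathcal{C}}$ (equivalently $P_{\mathcal{C}}$) is sound, since that is all the paper's argument establishes. Moreover, $R_{P_{\mathcal{C}}}$ is invariant under replacing $\mathcal{C}$ by $g\mathcal{C}$ for $g\in\mathrm{GL}_n(\mathbb{F}_q)$, so it cannot by itself pin down the function $W\mapsto A_{\mathcal{C}}^{(r)}(W)$ on $\mathcal{L}(E)$.
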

\begin{proof}
  (1) Applying the M\"{o}bius inversion on $\mathcal{L}(E)$ to the straightforward equation
  \begin{equation*}
    \B[r]{\mathcal{C}}{J} = \sum_{W \leq J} \A[r]{\mathcal{C}}{W},
  \end{equation*}
  we obtain from Lemma~\ref{lem:B-gaussian} that
  \begin{equation*}
    \A[r]{\mathcal{C}}{W}
    = \sum_{J \leq W} \mu(J, W) \B[r]{\mathcal{C}}{J}
    = \sum_{J \leq W} \mu(J, W) \qbinom{\dim \mathcal{C}(J)}{r}{q}
    = \sum_{J \leq W} \mu(J, W) \qbinom{\rho(E) - \rho(J^{\perp})}{r}{q}.
  \end{equation*}
  Therefore, the associated $(q, m)$-polymatroid $\P_{\mathcal{C}}$ determines $\A[r]{\mathcal{C}}{W}$ for every $0 \leq r \leq k$ and every $W \leq E$.

  (2) When $r = 1$, Lemma~\ref{lem:B-gaussian} gives
  \begin{equation*}
    \B[1]{\mathcal{C}}{J} = \qbinom{\dim \mathcal{C}(J)}{1}{q} = \frac{q^{\dim \mathcal{C}(J)} - 1}{q - 1}.
  \end{equation*}
  On the other hand, we have $\displaystyle \B[1]{\mathcal{C}}{J} = \sum_{W \leq J} \A[1]{\mathcal{C}}{W}$. Hence
  \begin{equation*}
  \dim \mathcal{C}(J)=\log_q\left(1 + (q - 1) \sum_{W \leq J}\A[1]{\mathcal{C}}{W}\right).
  \end{equation*}
  In particular, $\dim \mathcal{C} = \dim \mathcal{C}(E)$ is recovered from $\A[1]{\mathcal{C}}$. Consequently, $\A[1]{\mathcal{C}}$ determines $\rho$ as
  \begin{equation*}
  \rho(J)=\rho(E) - \dim \mathcal{C}(J^{\perp})=\dim \mathcal{C}- \log_q\left(1 + (q - 1) \sum_{W \leq J^{\perp}}\A[1]{\mathcal{C}}{W}\right).\qedhere
  \end{equation*}
\end{proof} 
\section{Greene-Type Identity for Higher Rank-Weight Enumerators} \label{sect:Greene}

\begin{dfn}[\cite{HigherSupportWeight}] \label{dfn:higher_rank-weight_enumerator}
  For all integers $0 \leq r \leq k$, the polynomial
  \begin{equation*}
    W_{\mathcal{C}}^{(r)}(x,y)
    \coloneqq
    \sum_{\mathcal{D} \in \qbinom{\mathcal{C}}{r}{q}}
    x^{n-\wt(\mathcal{D})}\,y^{\wt(\mathcal{D})}
    \ \in\ \mathbb{Z}[x,y]
  \end{equation*}
  is called the \emph{$r$-th higher rank-weight enumerator} of $\mathcal{C}$.
\end{dfn}

\begin{eg} \label{eg:higher_rank-weight_enumerator}
  Let $\mathcal{C} \coloneqq \langle M_{1}, M_{2}, M_{3} \rangle \leq \Mat{2}{3}{\mathbb{F}_{2}}$
  be the rank-metric code in Example~\ref{eg:rank_generating_function}.
  The one-dimensional subcodes are
  \begin{equation*}
    \langle M_1\rangle,\,
    \langle M_2\rangle,\,
    \langle M_3\rangle,\,
    \langle M_1+M_2\rangle,\,
    \langle M_1+M_3\rangle,\,
    \langle M_2+M_3\rangle,\,
    \langle M_1+M_2+M_3\rangle,
  \end{equation*}
  and the two-dimensional subcodes are
  \begin{align*}
    &\langle M_1,M_2\rangle,\ \langle M_1,M_3\rangle,\ \langle M_2,M_3\rangle,\ \langle  M_1,M_2+M_3\rangle,\\
    &\langle M_2,M_1+M_3\rangle,\ \langle M_3,M_1+M_2\rangle,\ \langle M_1+M_2,M_1+M_3\rangle.
  \end{align*}
  Since every nonzero codeword has column space $\mathbb F_2^2$, every nonzero subcode
  $\mathcal D\le \mathcal C$ has higher support $\Supp(\mathcal D)=\mathbb F_2^2$, and hence
  $\wt(\mathcal D)=2$. Therefore
  \begin{equation*}
    W_{\mathcal{C}}^{(0)}(x, y) = x^{2},\qquad
    W_{\mathcal{C}}^{(1)}(x, y) = 7y^{2},\qquad
    W_{\mathcal{C}}^{(2)}(x, y) = 7y^{2},\qquad
    W_{\mathcal{C}}^{(3)}(x, y) = y^{2}.
  \end{equation*}
\end{eg}

\begin{lem}[{\cite[Equation~(1.87)]{Stanley}}] \label{lem:q-Pochhammer}
  For all $N \in \mathbb{Z}_{\geq 0}$,
  \begin{equation*}
  \sum_{s=0}^{N}
  \qbinom{N}{s}{q}
  (-1)^s q^{\binom{s}{2}}
  z^s
  =
  \prod_{i=0}^{N-1}(1-q^{i} z)\biggl(=g_N(1, z)\biggr).
\end{equation*}
\end{lem}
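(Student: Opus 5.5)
This is the $q$-binomial theorem in Cauchy form, and I will prove it by induction on $N$ using the $q$-Pascal recurrence. The extended definition of the Gaussian coefficient in the paper agrees with the usual one for $0\le s\le N$, and both sides are polynomials in $z$ with integer coefficients, so a coefficient-wise comparison suffices. Write $F_N(z)$ for the left-hand side. The base case $N=0$ is immediate: $F_0(z)=1$, which equals the empty product.

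For the inductive step I will use the recurrence
\begin{equation*}
\qbinom{N+1}{s}{q}=\qbinom{N}{s}{q}+q^{N+1-s}\qbinom{N}{s-1}{q}.
\end{equation*}
This follows directly from the product formula: factor out the common terms and check that $(q^{N+1}-1)=(q^{N+1-s}-1)+q^{N+1-s}(q^{s}-1)$. Substituting the recurrence into $F_{N+1}(z)$ splits it into two sums. The first sum is $F_N(z)$. In the second I shift $s\mapsto s+1$ and use $\binom{s+1}{2}=\binom{s}{2}+s$, which turns it into
\begin{equation*}
-z\sum_{s=0}^{N}\qbinom{N}{s}{q}(-1)^{s}q^{\binom{s}{2}}q^{N-s}q^{s}z^{s}=-q^{N}z\,F_N(z).
\end{equation*}
Together these give $F_{N+1}(z)=(1-q^{N}z)F_N(z)$. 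The induction hypothesis then yields $\prod_{i=0}^{N}(1-q^{i}z)$.

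The parenthetical identification is immediate from the definition of $g_N$: $g_N(1,z)=\prod_{i=0}^{N-1}(1-q^{i}z)$.

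The only delicate point is the bookkeeping of the exponent of $q$ in the shifted sum. It is essential to choose the Pascal recurrence with the factor $q^{N+1-s}$ on the $\qbinom{N}{s-1}{q}$ term rather than the mirror version. With this choice the powers combine as $q^{N-s}\cdot q^{s}=q^{N}$, independent of $s$, so the sum factors cleanly. With the other version the factor would depend on $s$ and the sum would not factor, so this is the choice to verify first.
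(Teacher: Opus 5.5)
Your induction is correct. The recurrence $\qbinom{N+1}{s}{q}=\qbinom{N}{s}{q}+q^{N+1-s}\qbinom{N}{s-1}{q}$ checks out. Its boundary cases $s=0$ and $s=N+1$ use $\qbinom{N}{-1}{q}=0$ and $\qbinom{N}{N+1}{q}=0$, and both are supplied by the paper's extended definition; the second comes from the vanishing factor $q^{0}-1$. The shifted sum does collapse to $-q^{N}zF_N(z)$. The paper gives no argument of its own here: it cites the $q$-binomial theorem from Stanley, $\prod_{i=0}^{N-1}(1+q^{i}x)=\sum_{s=0}^{N}q^{\binom{s}{2}}\qbinom{N}{s}{q}x^{s}$, specialized at $x=-z$. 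So the only difference is that you make this step self-contained in a few lines, while the citation keeps a classical identity brief.

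One correction to your closing remark: the choice of Pascal recurrence is not essential. Take the mirror version $\qbinom{N+1}{s}{q}=q^{s}\qbinom{N}{s}{q}+\qbinom{N}{s-1}{q}$. The factor $q^{s}$ simply merges with $z^{s}$, and the same computation gives $F_{N+1}(z)=(1-z)F_N(qz)$. The induction then closes equally well, since $(1-z)\prod_{i=0}^{N-1}(1-q^{i+1}z)=\prod_{i=0}^{N}(1-q^{i}z)$. This does not affect the validity of your proof, but the claim that the other recurrence ``would not factor'' is false.
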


\begin{lem} \label{lem:basis_exchange_with_Mobius}
  Let $K \in \mathcal{L}(E)$. Then,
  \begin{equation*}
    \sum_{J:\ K \leq J \leq E}
    \mu(K,J)\,x^{n-\dim J} y^{\dim J}
    =
    y^{\dim K}\,g_{n - \dim K}(x,y).
  \end{equation*}
\end{lem}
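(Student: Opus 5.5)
The plan is to reparametrize the sum over the interval $[K,E]$ by the quotient $E/K$, count subspaces of each dimension, and finish with Lemma~\ref{lem:q-Pochhammer}. Put $d \coloneqq \dim K$ and $N \coloneqq n-d$.

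The interval $\{J \mid K \leq J \leq E\}$ is in bijection with $\mathcal{L}(E/K)$ via $J \mapsto J/K$, and $\dim E/K = N$. So for each $s$ with $0 \leq s \leq N$, the number of $J$ with $K \leq J$ and $\dim J = d+s$ is $\qbinom{N}{s}{q}$. For every such $J$, the definition of the M\"obius function gives $\mu(K,J) = (-1)^{s} q^{\binom{s}{2}}$, and the monomial is $x^{n-d-s} y^{d+s}$. Grouping the terms by $s$, the left-hand side becomes
\begin{equation*}
  y^{d} \sum_{s=0}^{N} \qbinom{N}{s}{q} (-1)^{s} q^{\binom{s}{2}} x^{N-s} y^{s}.
\end{equation*}

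Next I factor out $x^{N}$, which is legitimate as an identity in $\mathbb{Z}[x,y]$ after homogenization. The inner sum equals $x^{N} \sum_{s} \qbinom{N}{s}{q} (-1)^{s} q^{\binom{s}{2}} (y/x)^{s}$. Lemma~\ref{lem:q-Pochhammer} with $z = y/x$ turns this into $x^{N} \prod_{i=0}^{N-1} (1 - q^{i} y/x)$, which is $\prod_{i=0}^{N-1} (x - q^{i} y) = g_{N}(x,y)$.

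To avoid dividing by $x$, I can instead note that both sides are homogeneous of degree $N$ in $(x,y)$. Alternatively, I can expand $g_N(x,y)$ directly by the homogeneous form of the $q$-binomial theorem, which is the same statement as Lemma~\ref{lem:q-Pochhammer}. Either way, multiplying by $y^{d}$ gives $y^{\dim K} g_{n-\dim K}(x,y)$.

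There is no real obstacle here. The only points needing care are:
\begin{itemize}
  \item the count of intermediate subspaces via the quotient $E/K$;
  \item the justification of the homogenization step.
\end{itemize}
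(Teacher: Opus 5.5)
Your proposal is correct and follows the paper's proof: group the terms of $[K,E]$ by $s=\dim J-\dim K$, count them with $\qbinom{n-\dim K}{s}{q}$, factor out $y^{\dim K}x^{N}$, and apply Lemma~\ref{lem:q-Pochhammer} with $z=y/x$. Your two remarks, the quotient $E/K$ justifying the count and homogeneity justifying the division by $x$, are points the paper uses implicitly.
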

\begin{proof}
  In the subspace lattice $\mathcal{L}(E)$ of $E$, the M\"{o}bius function is calculated as
  \begin{equation*}
    \mu(K, J) = (-1)^{\dim J - \dim K} q^{\binom{\dim J - \dim K}{2}}
  \end{equation*}
  for all $K, J \in \mathcal{L}(E)$. Thus, the left-hand side of the required equation is
  \begin{align*}
    \mathrm{L.H.S.}
    &= \sum_{j = \dim K}^{n} \sum_{K \leq J \in \qbinom{E}{j}{q}} (-1)^{j - \dim K} q^{\binom{j - \dim K}{2}} x^{n - j} y^{j} \\
    &= \sum_{j = \dim K}^{n} \qbinom{n - \dim K}{j - \dim K}{q} (-1)^{j - \dim K} q^{\binom{j - \dim K}{2}} x^{n - \dim K} y^{\dim K} \left( \frac{y}{x} \right)^{j - \dim K}.
  \end{align*}
  Substituting $s \coloneqq j - \dim K$ and $N \coloneqq n - \dim K$, we have
  \begin{align*}
    \mathrm{L.H.S.} &= \sum_{s = 0}^{N} \qbinom{N}{s}{q} (-1)^{s} q^{\binom{s}{2}} x^{N} y^{\dim K} \left( \frac{y}{x} \right)^{s}
    = x^{N} y^{\dim K} \prod_{i = 0}^{N - 1} \left(1 - q^{i} \cdot \frac{y}{x} \right) \\
    &= y^{\dim K} \prod_{i = 0}^{n - \dim K - 1} (x - q^{i} y)
    = y^{\dim K} g_{n - \dim K}(x, y),
  \end{align*}
  as required.
\end{proof}

\begin{thm} \label{thm:Greene_subsp}
  For all $r \in \{ 0, 1, \dots, k \}$, we have
  \begin{equation*}
    W_{\mathcal{C}}^{(r)}(x,y)
    =
    \sum_{J \in \mathcal{L}(E)} \qbinom{\dim \mathcal{C}(J)}{r}{q}
    y^{\dim J}\,
    g_{\,n-\dim J}(x,y),
  \end{equation*}
  or equivalently,
  \begin{equation*}
    W_{\mathcal{C}}^{(r)}(x,y)
    =
    \sum_{U \in \mathcal{L}(E)}
    \qbinom{\rho(E) -   \rho(U)}{r}{q} \, 
    y^{n-\dim U}\,
    g_{\dim U}(x,y).
  \end{equation*}
\end{thm}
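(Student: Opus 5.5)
The plan is to group the subcodes in $W_{\mathcal{C}}^{(r)}$ by their higher rank-support and then apply the Möbius inversion already used in Theorem~\ref{thm:equivalence_A_P}. First I will rewrite the enumerator as
\begin{equation*}
  W_{\mathcal{C}}^{(r)}(x,y)=\sum_{W\in\mathcal{L}(E)}\A[r]{\mathcal{C}}{W}\,x^{n-\dim W}y^{\dim W}.
\end{equation*}
Next I will substitute the formula from Theorem~\ref{thm:equivalence_A_P}(1), in its intermediate form $\A[r]{\mathcal{C}}{W}=\sum_{J\le W}\mu(J,W)\,\B[r]{\mathcal{C}}{J}$ with $\B[r]{\mathcal{C}}{J}=\qbinom{\dim\mathcal{C}(J)}{r}{q}$ by Lemma~\ref{lem:B-gaussian}. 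This gives
\begin{equation*}
  W_{\mathcal{C}}^{(r)}(x,y)=\sum_{J\in\mathcal{L}(E)}\qbinom{\dim\mathcal{C}(J)}{r}{q}\sum_{W:\,J\le W\le E}\mu(J,W)\,x^{n-\dim W}y^{\dim W}.
\end{equation*}

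For the inner sum I will use Lemma~\ref{lem:basis_exchange_with_Mobius} with $K=J$, which evaluates it to $y^{\dim J}g_{n-\dim J}(x,y)$. This yields the first displayed formula immediately.

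For the equivalent form, I will reindex by $U\coloneqq J^{\perp}$. Since $J\mapsto J^\perp$ is a bijection of $\mathcal{L}(E)$ with $\dim J=n-\dim U$, the monomial part becomes $y^{n-\dim U}g_{\dim U}(x,y)$. By Proposition~\ref{prop:qm_polymatroid_from_rank_metric_code}, $\dim\mathcal{C}(J)=\dim\mathcal{C}(U^\perp)=\dim\mathcal{C}-\rho_{\mathcal{C}}(U)$. Finally I will note that $\rho_{\mathcal{C}}(E)=\dim\mathcal{C}-\dim\mathcal{C}(0)=\dim\mathcal{C}$, because only the zero matrix has support contained in $0$. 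Hence $\dim\mathcal{C}(J)=\rho(E)-\rho(U)$.

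There is no real obstacle here. The only points needing care are the order of summation swap, where I must check that the ranges $J\le W$ match the hypothesis $K\le J$ of Lemma~\ref{lem:basis_exchange_with_Mobius}, and the identification $\rho(E)=\dim\mathcal{C}$ in the dual reindexing.
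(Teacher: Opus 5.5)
Your proposal is correct and follows the paper's proof essentially step for step. You expand via the support distribution, substitute the Möbius-inverted formula with $\B[r]{\mathcal{C}}{J}=\qbinom{\dim\mathcal{C}(J)}{r}{q}$, swap the sums and evaluate the inner sum by Lemma~\ref{lem:basis_exchange_with_Mobius}, then reindex by $U=J^{\perp}$ using $\dim\mathcal{C}(U^{\perp})=\rho(E)-\rho(U)$; your explicit check that $\rho(E)=\dim\mathcal{C}$ because $\mathcal{C}(0)=0$ is a detail the paper leaves implicit.
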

\begin{proof}
  Substituting $\displaystyle \A[r]{\mathcal{C}}{J} = \sum_{K \leq J} \mu(K, J) \qbinom{\dim \mathcal{C}(K)}{r}{q}$ into $\displaystyle W_{\mathcal{C}}^{(r)}(x,y)$, it follows that
  \begin{align*}
     W_{\mathcal{C}}^{(r)}(x, y) &= \sum_{J \in \mathcal{L}(E)} \sum_{K \leq J} \mu(K, J) \qbinom{\dim \mathcal{C}(K)}{r}{q} x^{n - \dim J} y^{\dim J} \\
     &= \sum_{K \in \mathcal{L}(E)} \qbinom{\dim \mathcal{C}(K)}{r}{q} \sum_{J : K \leq J \leq E} \mu(K, J)  x^{n - \dim J} y^{\dim J} \\
     &= \sum_{K \in \mathcal{L}(E)} \qbinom{\dim \mathcal{C}(K)}{r}{q} y^{\dim K} g_{n - \dim K}(x, y).
  \end{align*}
  Replacing $K$ with $J$, we obtain the first claim. Putting $K = U^{\perp}$,
  \begin{equation*}
    W_{\mathcal{C}}^{(r)}(x, y)
    = \sum_{U^{\perp} \in \mathcal{L}(E)} \qbinom{\dim \mathcal{C}(U^{\perp})}{r}{q} y^{\dim   U^{\perp}} g_{n - \dim U^{\perp}}(x, y).
  \end{equation*}
  By $\rho(U) = \dim \mathcal{C} - \dim \mathcal{C}(U^{\perp})$ and $\rho(E) = \dim \mathcal{C}$, we get $\dim \mathcal{C}(U^{\perp}) = \dim \mathcal{C} - \rho(U) = \rho(E) - \rho(U)$.
  Noting that $\dim U^{\perp} = n - \dim U$, we have
  \begin{equation*}
    W_{\mathcal{C}}^{(r)}(x, y)
    =
    \sum_{U^{\perp} \in \mathcal{L}(E)} \qbinom{\rho(E) - \rho(U)}{r}{q} y^{n - \dim U} g_{\dim U}(x, y),
  \end{equation*}
  which completes the proof.
\end{proof}

\begin{dfn}\label{dfn:SC}
  Let $\P=(E, \rho)$ be a $(q, m)$-polymatroid.
  We define the polynomial
  \begin{equation*}
    S_{\P}(u, x, y) \coloneqq \sum_{U \leq E} u^{\rho(E) - \rho(U)} y^{n - \dim U} g_{\dim U}(x, y) \in \mathbb{Z}[u, x, y].
  \end{equation*}
\end{dfn}

\begin{lem} \label{lem:expansion_SC}
Let $\P=(E, \rho)$ be a $(q, m)$-polymatroid. Then, 
  \begin{equation*}
    S_{\P}(u,x,y)
    = y^{n-\rho(E)/m}\,R_{\P}\!
    \left( uy^{1/m},\, y^{-1/m},\, x,\, y \right).
  \end{equation*}
\end{lem}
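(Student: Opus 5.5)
The identity is a termwise comparison of the two sums over $U \leq E$; no combinatorial input is needed beyond Definition~\ref{dfn:rank_generating_function} and Definition~\ref{dfn:SC}. I will expand the right-hand side by substituting $X_1 = u y^{1/m}$, $X_2 = y^{-1/m}$, $X_3 = x$, $X_4 = y$ into $R_{\P}$. The summand indexed by $U \in \mathcal{L}(E)$ becomes
\begin{equation*}
  u^{\rho(E)-\rho(U)}\, y^{(\rho(E)-\rho(U))/m}\, y^{-(m\dim U - \rho(U))/m}\, g_{\dim U}(x,y).
\end{equation*}

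Next I collect the powers of $y$ in this summand. The exponent is
\begin{equation*}
  \frac{\rho(E)-\rho(U)}{m} - \dim U + \frac{\rho(U)}{m} = \frac{\rho(E)}{m} - \dim U .
\end{equation*}
The $\rho(U)$ terms cancel, which is the crux. Multiplying by the prefactor $y^{n-\rho(E)/m}$ gives total exponent $n - \dim U$. So each summand equals
\begin{equation*}
  u^{\rho(E)-\rho(U)}\, y^{n-\dim U}\, g_{\dim U}(x,y),
\end{equation*}
and summing over $U \leq E$ yields exactly $S_{\P}(u,x,y)$.

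The only point needing care is the meaning of the fractional powers $y^{1/m}$. I will work in the Laurent-type ring $\mathbb{Z}[u,x,y^{1/m},y^{-1/m}]$, in which the substitution is well defined. The computation above shows the result lies in the subring $\mathbb{Z}[u,x,y]$, since every exponent $n - \dim U$ is a nonnegative integer. The identity then holds there as an equality of polynomials.
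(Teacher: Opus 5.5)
Your proposal is correct and follows essentially the same route as the paper's proof: substitute into $R_{\P}$ termwise and observe that the $\rho(U)$ contributions to the exponent of $y$ cancel, leaving $y^{n-\dim U}$. Your remark about working in $\mathbb{Z}[u,x,y^{1/m},y^{-1/m}]$ to make the fractional powers meaningful is a small extra point of care that the paper leaves implicit.
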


\begin{proof}
  The definition of the rank generating function yields
  \begin{align*}
    S_{\P}(u, x, y) &= y^{n - \rho(E)/m} \sum_{U \leq E} (uy^{1/m})^{\rho(E) - \rho(U)} (y^{-1/m})^{m \dim U - \rho(U)} g_{\dim U}(x, y) \\
    &= y^{n - \rho(E)/m} \sum_{U \leq E} u^{\rho(E) - \rho(U)} y^{\rho(E)/m - \dim U} g_{\dim U}(x, y) \\
    &= \sum_{U \leq E} u^{\rho(E) - \rho(U)} y^{n - \dim U} g_{\dim U}(x, y). \qedhere
  \end{align*}
\end{proof}

\begin{lem}\label{lem:q-binom-expansion}
  For all integers $1 \leq t \leq r$ and $a \in \mathbb{Z}_{\geq 0}$, we have
  \begin{equation*}
    \qbinom{a}{r}{q}
    =
    \frac{1}{g_{r}(q^{r},1)}
    \sum_{i=0}^{r}
    (-1)^{r-i}
    q^{\binom{r-i}{2}+i(t-1)}
    \qbinom{r}{i}{q}
    q^{i\max\{0,a-t+1\}}.
  \end{equation*}
\end{lem}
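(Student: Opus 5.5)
The plan is to clear the denominator and reduce the identity to Lemma~\ref{lem:q-Pochhammer} evaluated at a single point. The whole argument rests on the product formula for the Gaussian coefficient. Observe first that
\begin{equation*}
  g_r(q^r,1)=\prod_{j=0}^{r-1}(q^r-q^j).
\end{equation*}
For every integer $a\geq 0$ we also have
\begin{equation*}
  \qbinom{a}{r}{q}\, g_r(q^r,1)=\prod_{j=0}^{r-1}(q^a-q^j)=g_r(q^a,1).
\end{equation*}
For $a\geq r$ this is the defining product $\prod_j (q^a-q^j)/(q^r-q^j)$. For $0\leq a<r$ both sides vanish: the extended definition $\prod_{i}(q^{a-i}-1)/(q^{r-i}-1)$ has the zero factor at $i=a$, and the right-hand product has the zero factor at $j=a$. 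It therefore suffices to show that the sum on the right of the statement equals $g_r(q^a,1)$.

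Next, expand $g_r(X,1)=\prod_{j=0}^{r-1}(X-q^j)$ as a polynomial in $X$. Write $g_r(X,1)=X^r\prod_{j}(1-q^j X^{-1})$ and apply Lemma~\ref{lem:q-Pochhammer} with $N=r$ and $z=X^{-1}$. Re-indexing by $i=r-s$ and using $\qbinom{r}{s}{q}=\qbinom{r}{r-s}{q}$ gives
\begin{equation*}
  g_r(X,1)=\sum_{i=0}^{r}(-1)^{r-i}q^{\binom{r-i}{2}}\qbinom{r}{i}{q}X^{i}.
\end{equation*}
Now substitute $X=q^{(t-1)+\max\{0,a-t+1\}}$. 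Then $X^i=q^{i(t-1)}q^{i\max\{0,a-t+1\}}$, which is exactly the factor appearing in the statement. Hence the right-hand side of the lemma equals $g_r\bigl(q^{\max\{a,t-1\}},1\bigr)/g_r(q^r,1)$.

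It remains to check that this equals $\qbinom{a}{r}{q}$, and the only real issue is the case split hidden in the $\max$.
\begin{enumerate*}[label=(\roman*)]
  \item If $a\geq t-1$, then $\max\{a,t-1\}=a$, and we get $g_r(q^a,1)/g_r(q^r,1)=\qbinom{a}{r}{q}$ by the first step.
  \item If $a<t-1$, then $a<r$ because $t\leq r$, so $\qbinom{a}{r}{q}=0$. On the other side, $X=q^{t-1}$ with $0\leq t-1\leq r-1$, so the factor $j=t-1$ of $g_r(q^{t-1},1)$ vanishes.
\end{enumerate*}
Both sides therefore agree in every case. I expect the main care to be needed not in any computation but in this bookkeeping: confirming that the extended Gaussian coefficient vanishes for $a<r$, and that the hypothesis $1\leq t\leq r$ is exactly what places the truncation exponent $t-1$ in the range $\{0,\dots,r-1\}$ where $g_r(\,\cdot\,,1)$ has its roots.
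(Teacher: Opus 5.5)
Your proof is correct and follows essentially the same route as the paper. Both arguments use Lemma~\ref{lem:q-Pochhammer} to identify the sum as $g_r(q^{\max\{a,t-1\}},1)/g_r(q^r,1)$, then split cases according to whether the maximum is $a$ or $t-1$, using $g_r(q^{t-1},1)=0$ for $t-1<r$. The differences are minor: you apply the Pochhammer identity once, before the case split, and you state explicitly that $\qbinom{a}{r}{q}g_r(q^r,1)=g_r(q^a,1)$ holds also for $0\le a<r$, which the paper uses without comment when $t\le a<r$.
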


\begin{proof}
  If $a \geq t$, then $i(t - 1) + i\max\{0, \, a-t+1\} = ia$.
  By Lemma~\ref{lem:q-Pochhammer},
  \begin{equation*}
  \mathrm{R.H.S.}
  =
  \frac{1}{g_{r}(q^{r},1)}
  \sum_{i=0}^{r}
  (-1)^{r-i}
  q^{\binom{r-i}{2}}
  \qbinom{r}{i}{q}
  q^{ia}
  =
  \frac{g_{r}(q^a,1)}{g_{r}(q^{r}, 1)}
  =
  \qbinom{a}{r}{q}.
  \end{equation*}
  On the other hand, if $a<t$, then $\max\{0, \, a-t+1\}=0$. Therefore,
  \begin{equation*}
    \mathrm{R.H.S.}
    =
    \frac{1}{g_{r}(q^{r},1)}
    \sum_{i=0}^{r}
    (-1)^{r-i}
    q^{\binom{r-i}{2}}
    \qbinom{r}{i}{q}q^{i(t-1)}
    =
    \frac{g_{r}(q^{t-1},1)}{g_{r}(q^{r},1)} 
    =
    0
  \end{equation*}
  because $t - 1 < r$. Moreover, since $a<t\le r$, we have $\displaystyle \qbinom{a}{r}{q}=0$.
\end{proof}

The next theorem is an analog of \cite[Theorem~9]{HigherSupportMatroid}.
\begin{thm}
  For all integers $1 \leq t \leq r \leq k$,
  \begin{align*} 
    W_{\mathcal C}^{(r)}(x,y) 
    &= \frac{1}{g_{r}(q^{r},1)}  \sum_{i=0}^{r}  (-1)^{r-i}q^{\binom{r-i}{2}+i(t-1)}
    \qbinom{r}{i}{q} \,  S_{\P_{\mathcal{C}}^{(t - 1)}}\left( q^{i}, x, y \right) \\ 
    &= \frac{1}{g_{r}(q^{r},1)} \sum_{i=0}^{r}  (-1)^{r-i}q^{\binom{r-i}{2}+i(t-1)}
    \qbinom{r}{i}{q} \, y^{\,n-\frac{k-t+1}{m}} 
    R_{\P_{\mathcal{C}}^{{\,(t-1})}} \left( q^{i} y^{1/m},y^{-1/m},x,y \right)
  \end{align*}
\end{thm}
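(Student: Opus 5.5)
The plan is to start from the second form of Theorem~\ref{thm:Greene_subsp}:
\begin{equation*}
W_{\mathcal{C}}^{(r)}(x,y)=\sum_{U\le E}\qbinom{a_U}{r}{q}\,y^{n-\dim U}g_{\dim U}(x,y),
\qquad a_U\coloneqq \rho_{\mathcal{C}}(E)-\rho_{\mathcal{C}}(U)=\dim\mathcal{C}(U^\perp)\in\mathbb{Z}_{\ge 0}.
\end{equation*}
For each fixed $U$, apply Lemma~\ref{lem:q-binom-expansion} with $a=a_U$ and the given $t$. This rewrites $\qbinom{a_U}{r}{q}$ as the normalised alternating sum over $i$ of $q^{i\max\{0,a_U-t+1\}}$.

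The key observation concerns the truncation $\rho'\coloneqq\rho_{\mathcal{C}}^{(t-1)}$, whose rank is $\rho'(E)=\min\{\rho(E),\rho(E)-t+1\}=\rho(E)-t+1$. Its corank is
\begin{equation*}
\rho'(E)-\rho'(U)=\rho(E)-t+1-\min\{\rho(U),\rho(E)-t+1\}=\max\{0,\,a_U-t+1\}.
\end{equation*}
Substituting this and interchanging the finite sums over $U$ and $i$ gives
\begin{equation*}
W_{\mathcal{C}}^{(r)}(x,y)=\frac{1}{g_r(q^r,1)}\sum_{i=0}^{r}(-1)^{r-i}q^{\binom{r-i}{2}+i(t-1)}\qbinom{r}{i}{q}\sum_{U\le E}(q^i)^{\rho'(E)-\rho'(U)}y^{n-\dim U}g_{\dim U}(x,y).
\end{equation*}
The inner sum is exactly $S_{\P_{\mathcal{C}}^{(t-1)}}(q^i,x,y)$ by Definition~\ref{dfn:SC}, which proves the first equality. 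Here $\P_{\mathcal{C}}^{(t-1)}$ is a $(q,m)$-polymatroid because $0\le t-1\le k-1$, so the truncation is defined.

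For the second equality, apply Lemma~\ref{lem:expansion_SC} to $\P_{\mathcal{C}}^{(t-1)}$ with $u=q^i$. This gives $S_{\P_{\mathcal{C}}^{(t-1)}}(q^i,x,y)=y^{n-\rho'(E)/m}R_{\P_{\mathcal{C}}^{(t-1)}}(q^iy^{1/m},y^{-1/m},x,y)$, and $\rho'(E)=k-t+1$ produces the stated exponent $n-\frac{k-t+1}{m}$.

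The only delicate step is the corank identity for the truncation, specifically checking that $\max\{0,a-t+1\}$ in Lemma~\ref{lem:q-binom-expansion} matches the truncated corank in both cases $\rho(U)\le\rho(E)-t+1$ and $\rho(U)>\rho(E)-t+1$. In the first case both sides equal $a_U-t+1\ge 0$; in the second both sides are $0$. Everything else is bookkeeping.
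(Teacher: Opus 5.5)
Your proposal is correct and follows essentially the same route as the paper. You expand each $\qbinom{\rho(E)-\rho(U)}{r}{q}$ in the second form of Theorem~\ref{thm:Greene_subsp} via Lemma~\ref{lem:q-binom-expansion}, identify $\max\{0,a_U-t+1\}$ with the corank $k-(t-1)-\min\{\rho(U),k-(t-1)\}$ of the truncation $\P_{\mathcal{C}}^{(t-1)}$ to obtain $S_{\P_{\mathcal{C}}^{(t-1)}}(q^i,x,y)$, and finish with Lemma~\ref{lem:expansion_SC} using $\rho^{(t-1)}(E)=k-t+1$. These are exactly the paper's steps, and your remark that the truncation is defined because $t-1\le k-1$ makes explicit something the paper leaves implicit.
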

\begin{proof}
  By Theorem~\ref{thm:Greene_subsp} and Lemma~\ref{lem:q-binom-expansion}, we have
\begin{align*} 
    W_{\mathcal{C}}^{(r)}(x,y)
    &= \sum_{U \leq E}  \qbinom{\rho(E) - \rho(U)}{r}{q} \, y^{\,n-\dim U}\, g_{\dim U}(x,y) \\
    &= \sum_{U \leq E} \frac{1}{g_{r}(q^{r},1)} 
       \sum_{i=0}^{r}  (-1)^{r-i}q^{\binom{r-i}{2}+i(t-1)}
       \qbinom{r}{i}{q}\, q^{\,i\max\{0, \, k-t+1 - \rho(U) \}}\, 
       y^{\,n-\dim U}g_{\dim U}(x,y)\\
    &= \sum_{U \leq E} \frac{1}{g_{r}(q^{r},1)}
       \sum_{i=0}^{r} (-1)^{r-i}q^{\binom{r-i}{2}+i(t-1)}
       \qbinom{r}{i}{q} \\
    &\hspace{5.5cm}\times q^{\,i(k-(t-1)-\min\{\rho(U),k-(t-1)\})}\,
      y^{\,n-\dim U}g_{\dim U}(x,y)\\
    &= \frac{1}{g_{r}(q^{r}, 1)} \sum_{i=0}^{r} (-1)^{r-i}q^{\binom{r-i}{2}+i(t-1)}
      \qbinom{r}{i}{q}\, S_{\P_{\mathcal{C}}^{(t - 1)}}\left( q^{i}, x, y \right) \\
    &= \frac{1}{g_{r}(q^{r}, 1)} \sum_{i=0}^{r} (-1)^{r-i}q^{\binom{r-i}{2}+i(t-1)}
      \qbinom{r}{i}{q}\, y^{\,n-\frac{k-t+1}{m}}
      R_{\,\P_{\mathcal{C}}^{(t-1)}}\left( q^{i}y^{1/m}, y^{-1/m}, x, y \right). \qedhere
\end{align*}
\end{proof}
When $t = 1$, $\P_{\mathcal{C}}^{(0)} = \P_{\mathcal{C}}$ and we obtain the next corollary. The case $r = 0$ follows directly from Theorem~\ref{thm:Greene_subsp} and Lemma~\ref{lem:expansion_SC}.
\begin{cor}
  For all integers $0 \leq r \leq k$, 
  \begin{align*}
    W_{\mathcal{C}}^{(r)}(x,y)
    &= \frac{1}{g_{r}(q^{r}, 1)} \sum_{i = 0}^{r} (-1)^{r - i} q^{\binom{r - i}{2}}
      \qbinom{r}{i}{q} y^{n - \frac{\rho(E)}{m}}
      R_{\P_{\mathcal{C}}} \!\left( q^{i}y^{1/m}, y^{-1/m}, x, y \right).
  \end{align*}
\end{cor}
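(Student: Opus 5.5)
The plan is to split according to whether $r\geq 1$ or $r=0$. For $r\geq 1$, the corollary is the specialization $t=1$ of the preceding theorem. For $r=0$, it follows from Theorem~\ref{thm:Greene_subsp} and Lemma~\ref{lem:expansion_SC}.

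\emph{Case $1\le r\le k$.} Take $t=1$ in the preceding theorem, which is allowed since $1\le t\le r\le k$. Then $q^{i(t-1)}=1$. Also $\P_{\mathcal{C}}^{(0)}=\P_{\mathcal{C}}$, because $\rho^{(0)}(J)=\min\{\rho(J),\rho(E)\}=\rho(J)$ by monotonicity \ref{r2}. The exponent $n-\frac{k-t+1}{m}$ becomes $n-\frac{k}{m}=n-\frac{\rho(E)}{m}$, since $\rho_{\mathcal{C}}(E)=\dim\mathcal{C}-\dim\mathcal{C}(0)=k$; here $\mathcal{C}(0)=0$ because only the zero matrix has zero column space. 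Substituting these three facts gives the displayed formula verbatim.

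\emph{Case $r=0$.} The theorem does not apply, since it needs $t\ge1$. Instead, evaluate the right-hand side directly. By the empty-product convention, $g_0(q^0,1)=1$. The sum then has the single term $i=0$, whose coefficient is $(-1)^0q^{0}\qbinom{0}{0}{q}=1$. So the right-hand side equals $y^{n-\rho(E)/m}R_{\P_{\mathcal{C}}}(y^{1/m},y^{-1/m},x,y)$, which is $S_{\P_{\mathcal{C}}}(1,x,y)$ by Lemma~\ref{lem:expansion_SC} with $u=1$.

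On the other side, Theorem~\ref{thm:Greene_subsp} with $r=0$ gives $W^{(0)}_{\mathcal{C}}(x,y)=\sum_{U}\qbinom{\rho(E)-\rho(U)}{0}{q}y^{n-\dim U}g_{\dim U}(x,y)$. Each coefficient $\qbinom{\rho(E)-\rho(U)}{0}{q}$ is the empty product, namely $1$, and $\rho(E)-\rho(U)\ge0$ by \ref{r2}. Hence the sum equals $S_{\P_{\mathcal{C}}}(1,x,y)$, and the two sides agree.

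The only point requiring care is the $r=0$ bookkeeping: the conventions $g_0=1$ and $\qbinom{a}{0}{q}=1$, and the fact that the factor $1/g_r(q^r,1)$ is well defined. The last holds because $g_r(q^r,1)=\prod_{i<r}(q^r-q^i)\neq0$. Everything else is direct substitution.
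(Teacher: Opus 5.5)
Your proposal is correct and follows the paper's own argument. The case $1\le r\le k$ is the specialization $t=1$ of the preceding theorem, using $\P_{\mathcal{C}}^{(0)}=\P_{\mathcal{C}}$ and $\rho_{\mathcal{C}}(E)=k$, and the case $r=0$ follows from Theorem~\ref{thm:Greene_subsp} and Lemma~\ref{lem:expansion_SC} with $u=1$; your check of the conventions $g_0=1$ and $\qbinom{a}{0}{q}=1$ supplies details the paper leaves implicit.
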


Next, we prove that the sequence $\{ W_{\mathcal{C}}^{(r)} \}_{r = 0}^{k}$
of the higher rank-weight enumerators determines the rank generating function $R_{\P_{\mathcal{C}}}$.

\begin{dfn} \label{def:BCrj}
  With a slight abuse of notation, we also define
  \begin{equation*}
    \A[r]{\mathcal{C}}{w}
    \coloneqq \sum_{W \in \qbinom{E}{w}{q}} \A[r]{\mathcal{C}}{W}
    \quad\text{and}\quad
    \B[r]{\mathcal{C}}{w}
    \coloneqq \sum_{W \in \qbinom{E}{w}{q}} \B[r]{\mathcal{C}}{W}
  \end{equation*}
  for all $w \in \{ 0, 1, \dots, n \}$.
  We refer to $\B[r]{\mathcal{C}}{w}$ as the
  \emph{$(w, r)$-th $q$-Bernstein coefficient}
  of $W_{\mathcal{C}}^{(r)}$ (cf. \cite{Lupas87}).
  For $w \in \{ 0, 1, \dots, n \}$ and $a \in \{ 0, 1, \dots, k \}$,
  we also set
  \begin{equation*}
    \N[a]{\mathcal{C}}{w}
    \coloneqq \abs{\left\{ W \in \qbinom{E}{w}{q} : \dim \mathcal{C}(W) = a \right\}}.
  \end{equation*}
\end{dfn}

\begin{lem} \label{lem:B-N}
  For all integers $w \in \{ 0, 1, \dots, n \}$ and $a, r \in \{ 0, 1, \dots, k \}$,
  \begin{equation*}
    \B[r]{\mathcal{C}}{w}
    = \sum_{a = 0}^{\dim \mathcal{C}} \N[a]{\mathcal{C}}{w} \qbinom{a}{r}{q}.
  \end{equation*}
  Conversely, 
  \begin{equation*}
    \N[a]{\mathcal{C}}{w} =
    \sum_{r=a}^{k}
    (-1)^{r - a}q^{\binom{r - a}{2}}
    \qbinom{r}{a}{q}
    \B[r]{\mathcal{C}}{w}.
  \end{equation*}
\end{lem}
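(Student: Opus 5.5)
The first identity comes from summing Lemma~\ref{lem:B-gaussian} over the Grassmannian $\qbinom{E}{w}{q}$. By definition,
\begin{equation*}
  \B[r]{\mathcal{C}}{w}=\sum_{W\in\qbinom{E}{w}{q}}\qbinom{\dim\mathcal{C}(W)}{r}{q}.
\end{equation*}
We group the $w$-subspaces $W$ according to the value $a=\dim\mathcal{C}(W)\in\{0,\dots,k\}$. Each class has exactly $\N[a]{\mathcal{C}}{w}$ members, which gives $\B[r]{\mathcal{C}}{w}=\sum_{a=0}^{k}\N[a]{\mathcal{C}}{w}\qbinom{a}{r}{q}$. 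Terms with $a<r$ vanish automatically because $\qbinom{a}{r}{q}=0$ there, so the formula holds uniformly for all $r$.

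For the converse, we view the first identity as a linear system: the vector $(\B[r]{\mathcal{C}}{w})_{r=0}^{k}$ is the image of the vector $(\N[a]{\mathcal{C}}{w})_{a=0}^{k}$ under the matrix $G=(\qbinom{a}{r}{q})_{r,a}$. This matrix is unitriangular, since its entries vanish for $a<r$ and equal $1$ on the diagonal. The claim is that its inverse is $H=((-1)^{r-a}q^{\binom{r-a}{2}}\qbinom{r}{a}{q})_{a,r}$, i.e.\ the $q$-analogue of binomial inversion. Substituting the first identity into the proposed right-hand side and interchanging the sums reduces everything to the orthogonality relation
\begin{equation*}
  \sum_{r=a}^{b}(-1)^{r-a}q^{\binom{r-a}{2}}\qbinom{r}{a}{q}\qbinom{b}{r}{q}=\delta_{a,b}\qquad(0\le a\le b\le k).
\end{equation*}

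This orthogonality relation is the only real step. To prove it, we use the factorisation $\qbinom{b}{r}{q}\qbinom{r}{a}{q}=\qbinom{b}{a}{q}\qbinom{b-a}{r-a}{q}$ and substitute $s=r-a$, $N=b-a$. The sum becomes
\begin{equation*}
  \qbinom{b}{a}{q}\sum_{s=0}^{N}\qbinom{N}{s}{q}(-1)^{s}q^{\binom{s}{2}}.
\end{equation*}
By Lemma~\ref{lem:q-Pochhammer} with $z=1$, the inner sum equals $\prod_{i=0}^{N-1}(1-q^{i})$. This product is $0$ for $N\ge1$ because of the factor $i=0$, and is the empty product $1$ for $N=0$. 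This yields $\delta_{a,b}$ and completes the inversion.

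Alternatively, one can note that this is exactly Möbius inversion on the subspace lattice of a $b$-dimensional space, with $\mu$ as in Theorem~\ref{thm:equivalence_A_P}. I expect no real obstacle here; the only care needed is to track the summation ranges, namely that $r$ runs from $a$ to $k$ and that the terms with $a>k$ or $r<a$ vanish.
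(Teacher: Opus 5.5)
Your proof is correct and follows the paper's route: you get the first identity by grouping the $w$-subspaces $W$ according to $\dim\mathcal{C}(W)$, and the converse by inverting this $q$-binomial transform. The paper only cites M\"obius inversion for the second step, whereas you verify the inversion explicitly through the orthogonality relation obtained from Lemma~\ref{lem:q-Pochhammer} at $z=1$, which is exactly the computation that citation leaves implicit.
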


\begin{proof}
  First, we have
  \begin{equation} \label{eq:explicit_generalized_binomial_moment}
    \B[r]{\mathcal{C}}{w}
    = \sum_{W \in \qbinom{E}{w}{q}} \B[r]{\mathcal{C}}{W}
    = \sum_{W \in \qbinom{E}{w}{q}} \qbinom{\dim \mathcal{C}(W)}{r}{q}.
  \end{equation}
  Grouping the terms by $a = \dim \mathcal{C}(W)$, we obtain
  \begin{equation*}
    \sum_{W \in \qbinom{E}{w}{q}} \qbinom{\dim \mathcal{C}(W)}{r}{q}
    = \sum_{a = 0}^{\dim \mathcal{C}}
      \abs{\left\{ W \in \qbinom{E}{w}{q} : \dim \mathcal{C}(W) = a \right\}}
      \cdot \qbinom{a}{r}{q} 
    = \sum_{a = 0}^{\dim \mathcal{C}} \N[a]{\mathcal{C}}{w} \qbinom{a}{r}{q},
  \end{equation*}
  which proves the first equation. The second equation is obtained by applying the M\"{o}bius inversion to the first.
\end{proof}

\begin{thm}\label{thm:inverse_Greene}
  \begin{align*}
    R_{\P_{\mathcal{C}}}(X_{1}, X_{2}, X_{3}, X_{4})
    &= \sum_{w = 0}^{n} \sum_{a = 0}^{\rho(E)} \N[a]{\mathcal{C}}{n - w} X_{1}^{a}X_{2}^{mw - \rho(E) + a} g_{w}(X_{3}, X_{4}) \\
    &= \sum_{w = 0}^{n} \sum_{a = 0}^{k} \sum_{r = a}^{k} (-1)^{r - a} q^{\binom{r - a}{2}} \qbinom{r}{a}{q} \B[r]{\mathcal{C}}{n - w} X_{1}^{a} X_{2}^{mw - k + a} g_{w}(X_{3}, X_{4}).
  \end{align*}
\end{thm}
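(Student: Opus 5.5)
The plan is to start from Definition~\ref{dfn:rank_generating_function} and regroup the sum over $A \in \mathcal{L}(E)$ according to two quantities: $w \coloneqq \dim A$, and $a \coloneqq \rho(E)-\rho_{\mathcal{C}}(A)$. We then rewrite the multiplicity of each monomial in terms of $\N[a]{\mathcal{C}}{\cdot}$. Proposition~\ref{prop:qm_polymatroid_from_rank_metric_code} gives $\rho_{\mathcal{C}}(A)=k-\dim\mathcal{C}(A^{\perp})$ and $\rho(E)=k$. Hence $a=\dim\mathcal{C}(A^{\perp})$, so $0\le a\le k=\rho(E)$. The exponent of $X_{2}$ is then
\begin{equation*}
m\dim A-\rho(A)=mw-(k-a)=mw-\rho(E)+a .
\end{equation*}
Thus each summand equals $X_{1}^{a}X_{2}^{mw-\rho(E)+a}g_{w}(X_{3},X_{4})$, and it depends only on the pair $(w,a)$.

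Next we count the subspaces $A$ with $\dim A=w$ and $\dim\mathcal{C}(A^{\perp})=a$. The map $A\mapsto A^{\perp}$ is a bijection from $\qbinom{E}{w}{q}$ onto $\qbinom{E}{n-w}{q}$, since $\perp$ is an involution on $\mathcal{L}(E)$ with $\dim A^{\perp}=n-\dim A$. So this count is exactly $\abs{\{W\in\qbinom{E}{n-w}{q} : \dim\mathcal{C}(W)=a\}}=\N[a]{\mathcal{C}}{n-w}$. Summing over $w\in\{0,\dots,n\}$ and $a\in\{0,\dots,\rho(E)\}$ gives the first equality. Pairs $(w,a)$ that occur for no $A$ contribute zero, so extending the ranges is harmless.

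For the second equality we substitute the inversion formula of Lemma~\ref{lem:B-N},
\begin{equation*}
\N[a]{\mathcal{C}}{n-w}=\sum_{r=a}^{k}(-1)^{r-a}q^{\binom{r-a}{2}}\qbinom{r}{a}{q}\B[r]{\mathcal{C}}{n-w},
\end{equation*}
and replace $\rho(E)$ by $k$ in the exponent of $X_{2}$.

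The only point needing care is the validity of the inversion in Lemma~\ref{lem:B-N}. If the proof there is taken as given, nothing remains. Otherwise, check it directly: it is the standard $q$-binomial inversion $\sum_{r}(-1)^{r-a}q^{\binom{r-a}{2}}\qbinom{r}{a}{q}\qbinom{b}{r}{q}=\delta_{a,b}$. To prove this, rewrite $\qbinom{b}{r}{q}\qbinom{r}{a}{q}=\qbinom{b}{a}{q}\qbinom{b-a}{r-a}{q}$ and apply Lemma~\ref{lem:q-Pochhammer} with $z=1$. This gives $g_{b-a}(1,1)=0$ unless $b=a$. Everything else is bookkeeping of exponents.
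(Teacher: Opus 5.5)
Your proposal is correct and follows essentially the same route as the paper: you group the terms of $R_{\P_{\mathcal{C}}}$ by $w=\dim A$ and $a=\dim\mathcal{C}(A^{\perp})=\rho(E)-\rho_{\mathcal{C}}(A)$, count them through the bijection $A\mapsto A^{\perp}$ to obtain $N_{\mathcal{C}}^{(a)}(n-w)$, and then substitute the inversion formula of Lemma~\ref{lem:B-N}. Your direct check of that inversion, via $\qbinom{b}{r}{q}\qbinom{r}{a}{q}=\qbinom{b}{a}{q}\qbinom{b-a}{r-a}{q}$ and Lemma~\ref{lem:q-Pochhammer} at $z=1$, spells out a step the paper only cites as M\"obius inversion.
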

\begin{proof}
  Since $\rho(W) = \rho(E) - \dim \mathcal{C}(W^{\perp})$ for any $W \in \mathcal{L}(E)$,
  putting $w \coloneqq \dim W$ and $a \coloneqq \dim \mathcal{C}(W^{\perp})$,
  we have
  \begin{equation*}
    \rho(E) - \rho(W) = a \quad \text{and} \quad
    m \dim W - \rho(W) = mw - \rho(E) + a.
  \end{equation*}
  Then
  \begin{equation*}
    \abs{\left\{ W \in \qbinom{E}{w}{q} : \dim \mathcal{C}(W^{\perp}) = a \right\}}
    = \abs{\left\{ U \in \qbinom{E}{n - w}{q} : \dim \mathcal{C}(U) = a \right\}}
    = \N[a]{\mathcal{C}}{n - w}
  \end{equation*}
  gives the first equation.
  Applying Lemma~\ref{lem:B-N}, we have
  \begin{align*}
    R_{\P_{\mathcal{C}}}(X_{1}, X_{2}, X_{3}, X_{4})
    &= \sum_{w = 0}^{n} \sum_{a = 0}^{\rho(E)} \N[a]{\mathcal{C}}{n - w} X_{1}^{a}X_{2}^{mw - \rho(E) + a} g_{w}(X_{3}, X_{4}) \\
    &= \sum_{w = 0}^{n} \sum_{a = 0}^{k} \sum_{r = a}^{k} (-1)^{r - a} q^{\binom{r - a}{2}} \qbinom{r}{a}{q} \B[r]{\mathcal{C}}{n - w} X_{1}^{a} X_{2}^{mw - \dim \mathcal{C} + a} g_{w}(X_{3}, X_{4}) \\
    &= \sum_{a = 0}^{k} \sum_{r = a}^{k} (-1)^{r - a} q^{\binom{r - a}{2}} \qbinom{r}{a}{q} X_{1}^{a}X_{2}^{a - k} \sum_{w = 0}^{n} \B[r]{\mathcal{C}}{w} X_{2}^{m(n - w)} g_{n - w}(X_{3}, X_{4}),
  \end{align*}
  as required. 
\end{proof}

Once the coefficients $\B[r]{\mathcal{C}}{w}$ have been recovered from
$W_{\mathcal{C}}^{(r)}$, the polynomial $R_{\P_{\mathcal{C}}}$ is
determined by the full family of higher rank-weight enumerators $W_{\mathcal{C}}^{(r)}$.

\begin{thm}
  \begin{align*}
    &R_{\P_{\mathcal{C}}}(X_{1}, X_{2}, X_{3}, X_{4}) \\
    &=\sum_{w=0}^{n}\sum_{a=0}^{k}\left(
      \frac{1}{g_{w}(q^{w},1)}
      \sum_{r=a}^{k}\sum_{i=0}^{w}
        (-1)^{r-a+w-i}
        q^{\binom{r - a}{2}+\binom{w-i}{2}}\qbinom{r}{a}{q}\qbinom{w}{i}{q}\,
        W_{\mathcal{C}}^{(r)}(q^{i}, 1)
    \right) X_{1}^{a} X_{2}^{mw-k+a} g_{w}(X_{3},X_{4}).
  \end{align*}
  In particular, $\{ W_{\mathcal{C}}^{(r)}(x, y) \}_{r=0}^{k}$
  uniquely determines $R_{\P_{\mathcal{C}}}$.
\end{thm}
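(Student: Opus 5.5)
The plan is to substitute into the second expression of Theorem~\ref{thm:inverse_Greene}, which writes $R_{\P_{\mathcal{C}}}$ in terms of the $q$-Bernstein coefficients $\B[r]{\mathcal{C}}{n-w}$. It then suffices to show that, for each $r$ and $w$,
\begin{equation*}
\B[r]{\mathcal{C}}{n-w} = \frac{1}{g_{w}(q^{w},1)} \sum_{i=0}^{w} (-1)^{w-i} q^{\binom{w-i}{2}} \qbinom{w}{i}{q}\, W_{\mathcal{C}}^{(r)}(q^{i},1).
\end{equation*}
Substituting this and reordering the finite sums gives exactly the displayed formula. The ``in particular'' is then immediate, since the right-hand side uses only evaluations of the $W_{\mathcal{C}}^{(r)}$.

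The first step rewrites the Greene-type identity. In the first form of Theorem~\ref{thm:Greene_subsp}, group the subspaces $J$ by $\dim J = w'$. By the definition of $\B[r]{\mathcal{C}}{w'}$ and Lemma~\ref{lem:B-gaussian}, this gives
\begin{equation*}
W_{\mathcal{C}}^{(r)}(x,y) = \sum_{w'=0}^{n} \B[r]{\mathcal{C}}{w'}\, y^{w'} g_{n-w'}(x,y) = \sum_{w=0}^{n} \B[r]{\mathcal{C}}{n-w}\, y^{n-w} g_{w}(x,y).
\end{equation*}
Next, specialize to $y=1$ and $x=q^{i}$ with $i\in\mathbb{Z}_{\geq 0}$. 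Then $g_{w}(q^{i},1)=\prod_{j=0}^{w-1}(q^{i}-q^{j})$. Comparing with the definition of the Gaussian coefficient, this equals $\qbinom{i}{w}{q}\, g_{w}(q^{w},1)$; for $w>i$ both sides vanish because the factor with $j=i$ is zero.

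Now set $c_{w}\coloneqq g_{w}(q^{w},1)\,\B[r]{\mathcal{C}}{n-w}$. The specialization becomes the triangular system
\begin{equation*}
W_{\mathcal{C}}^{(r)}(q^{i},1)=\sum_{w=0}^{i}\qbinom{i}{w}{q} c_{w}, \qquad i=0,1,\dots,n.
\end{equation*}
The final step is Gaussian (M\"obius) inversion on this system. The inverse of the lower-triangular matrix $\bigl(\qbinom{i}{w}{q}\bigr)$ has entries $(-1)^{i-w}q^{\binom{i-w}{2}}\qbinom{i}{w}{q}$. This follows from Lemma~\ref{lem:q-Pochhammer} at $z=1$ via the standard identity $\sum_{s}\qbinom{N}{s}{q}(-1)^{s}q^{\binom{s}{2}}=\delta_{N,0}$, or equivalently from the M\"obius function of $\mathcal{L}(E)$ already used in the paper. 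Inverting gives $c_{w}$, and dividing by $g_{w}(q^{w},1)\neq 0$ yields the formula for $\B[r]{\mathcal{C}}{n-w}$ above.

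The main point to handle with care is the bookkeeping in this inversion: the index reversal $w\leftrightarrow n-w$, the vanishing of $g_{w}(q^{i},1)$ for $w>i$, and matching the sign and power of $q$ exactly with the statement. Everything else is direct substitution.
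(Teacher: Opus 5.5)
Your proposal is correct and matches the paper's proof: both expand $W_{\mathcal{C}}^{(r)}(x,y)=\sum_{w}B_{\mathcal{C}}^{(r)}(w)\,y^{w}g_{n-w}(x,y)$, evaluate at $(q^{i},1)$ using $g_{s}(q^{i},1)=\qbinom{i}{s}{q}g_{s}(q^{s},1)$, recover $B_{\mathcal{C}}^{(r)}(n-w)$ by $q$-binomial inversion (Lemma~\ref{lem:q-Pochhammer} at $z=1$), and substitute into Theorem~\ref{thm:inverse_Greene}. The only difference is presentation: the paper checks the extraction functional $\Lambda_{w}$ directly instead of inverting the unitriangular Gaussian matrix, and your explicit expansion of $W_{\mathcal{C}}^{(r)}$ in the basis $y^{w}g_{n-w}(x,y)$ states a step the paper leaves implicit.
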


\begin{proof}
  For an arbitrary homogeneous polynomial
  \begin{equation*}
    F(x,y) = \sum_{w=0}^{n} b_{w} \, y^{w} g_{n-w}(x,y)
  \end{equation*}
  of total degree $n$, and for each $w \in \{ 0, 1, \dots, n \}$, define
  \begin{equation*}
    \Lambda_{w}(F) \coloneqq \frac{1}{g_{n-w}(q^{n-w},1)}
    \sum_{i=0}^{n-w} (-1)^{n-w-i}q^{\binom{n-w-i}{2}} \qbinom{n-w}{i}{q}\,F(q^{i},1).    
  \end{equation*}
  We claim $\Lambda_{w}(F) = b_{w}$. Indeed, set $w^{\prime} \coloneqq n - w$, and write
  \begin{equation*}
    F(u,1) = \sum_{s=0}^{n} c_{s}\,g_{s}(u,1), \qquad c_{s} = b_{n-s}.
  \end{equation*}
  Then
  \begin{equation*}
    \Lambda_{w}(F) = \frac{1}{g_{w^{\prime}}(q^{w^{\prime}},1)}
    \sum_{i=0}^{w^{\prime}} (-1)^{w^{\prime}-i}q^{\binom{w^{\prime}-i}{2}} \qbinom{w^{\prime}}{i}{q}
    \sum_{s=0}^{i} c_{s}\,g_{s}(q^{i},1).
  \end{equation*}
  Using the identities 
  $g_{s}(q^{i}, 1) = \qbinom{i}{s}{q}\,g_{s}(q^{s},1)$
  and $\qbinom{w^{\prime}}{i}{q}\qbinom{i}{s}{q} = \qbinom{w^{\prime}}{s}{q}\qbinom{w^{\prime}-s}{i-s}{q}$, 
  we obtain
  \begin{align*}
    \Lambda_{w}(F)
    &= \frac{1}{g_{w^{\prime}}(q^{w^{\prime}}, 1)} \sum_{s=0}^{w^{\prime}} c_{s} \sum_{i=s}^{w^{\prime}}
      (-1)^{w^{\prime}-i}q^{\binom{w^{\prime}-i}{2}} \qbinom{w^{\prime}}{i}{q} \qbinom{i}{s}{q} \,g_{s}(q^{s},1) \\
    &= \frac{1}{g_{w^{\prime}}(q^{w^{\prime}}, 1)} \sum_{s=0}^{w^{\prime}} c_{s}\,g_{s}(q^{s},1)\qbinom{w^{\prime}}{s}{q}
      \sum_{r=0}^{w^{\prime}-s} \qbinom{w^{\prime}-s}{r}{q} (-1)^{w^{\prime}-s-r}q^{\binom{w^{\prime}-s-r}{2}}.
  \end{align*}
  Applying Lemma~\ref{lem:q-Pochhammer} with $N \coloneqq w^{\prime} - s$ and $z = 1$,
  \begin{equation}
    \sum_{r=0}^{w^{\prime} - s} \qbinom{w^{\prime} - s}{r}{q} (-1)^{w^{\prime} - s - r}q^{\binom{w^{\prime} - s -r}{2}}
    =
    g_{w^{\prime} - s}(1, 1)
    =
    \begin{cases}
      1 & (w^{\prime}-s=0),\\
      0 & (w^{\prime}-s>0).
    \end{cases}
  \end{equation}
  Therefore all terms vanish except the one with $s=w^{\prime}$, and hence
  \begin{equation}
    \Lambda_{w}(F)
    = \frac{1}{g_{w^{\prime}}(q^{w^{\prime}}, 1)}\,c_{w^{\prime}}\,g_{w^{\prime}}(q^{w^{\prime}},1) = c_{w^{\prime}}.
  \end{equation}
  Since $c_{s} = b_{n-s}$ and $w^{\prime} = n - w$, it follows that $\Lambda_{w}(F) = c_{w^{\prime}} = b_{w}$.
  Applying $\Lambda_{w}$ to $F = W_{\mathcal{C}}^{(r)}(x, y)$,
  we obtain $\B[r]{\mathcal{C}}(w)=\Lambda_{w}\!\left(W_{\mathcal{C}}^{(r)}\right)$.
  Substituting this into the identity in Theorem~\ref{thm:inverse_Greene}
  immediately gives the desired result.
\end{proof} 
\section{Applications} \label{sect:Application}
\subsection{Kl{\o}ve-type MacWilliams Identity for Rank-Metric Codes}

\begin{dfn}[{\cite[Definitions~4 and 5]{GadouleauYan2007MacWilliamsRankMetric}}]
Let $\mu$ be a nonnegative integer. Let $a(x,y;\mu)=\sum_{i=0}^{r}a_i(\mu)x^{r-i}y^i$ and $b(x,y;\mu)=\sum_{j=0}^{s}b_j(\mu)x^{s-j}y^j$ be homogeneous polynomials of degrees $r$ and $s$, respectively, where $a_i(\mu)$ and $b_j(\mu)$ are real-valued functions of $\mu$, and unspecified coefficients are understood to be zero. We define the \emph{$q$-product} by
\begin{equation*}
(a*_\mu b)(x,y;\mu)\coloneqq\sum_{u=0}^{r+s}c_u(\mu)x^{r+s-u}y^u,
\end{equation*}
where $c_u(\mu)\coloneqq\sum_{i=0}^{u}q^{is}a_i(\mu)b_{u-i}(\mu-i)$. We also define the \emph{$q$-power} by
\begin{equation*}
a^{\langle0\rangle} \coloneqq 1,\qquad a^{\langle N \rangle} \coloneqq a^{\langle N-1 \rangle}*_{\mu}a\quad (N\geq 1).
\end{equation*}
For a homogeneous polynomial $F(x,y)=\sum_{i=0}^{n}f_i x^{n-i}y^i$ of degree $n$, we define its \emph{$q$-transform} by
\begin{equation*}
\overline{F}^{(\mu)}(x,y) \coloneqq \sum_{i=0}^{n}f_i\,y^{\langle i \rangle}*_{\mu}x^{\langle n - i \rangle}.
\end{equation*}
\end{dfn}

\begin{lem}
Let $\mu$ be a nonnegative integer, and set $a(x,y;\mu)\coloneqq x+(q^\mu-1)y$, $b(x,y;\mu)\coloneqq x-y$, and $c(x,y;\mu)\coloneqq a(x,y;\mu)-b(x,y;\mu)=q^\mu y$. Then, for every nonnegative integer $s$,
\begin{equation*}
c^{\langle s \rangle}=\sum_{\ell=0}^{s}(-1)^\ell q^{\binom{\ell}{2}}\qbinom{s}{\ell}{q}b^{\langle \ell \rangle}*_{\mu}a^{\langle s - \ell \rangle}.
\end{equation*}
\end{lem}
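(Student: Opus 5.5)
The plan is a direct coefficient comparison. Both sides are homogeneous of degree $s$ in $x,y$, so it is enough to compute them explicitly. Throughout I use that $*_{\mu}$ is bilinear in its two arguments, which is clear from the definition of $c_u(\mu)$. I also use that $*_{\mu}$ is associative, as shown in \cite{GadouleauYan2007MacWilliamsRankMetric}.

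\emph{Left-hand side.} I will first show by induction on $s$ that
\begin{equation*}
c^{\langle s\rangle}=q^{s\mu}y^{s}.
\end{equation*}
Assume $c^{\langle N-1\rangle}=q^{(N-1)\mu}y^{N-1}$ and apply the definition of the $q$-product with second factor $c$, which has degree $1$. Only the term with $i=N-1$ contributes. It gives the factor $q^{(N-1)\cdot 1}$, the coefficient $q^{(N-1)\mu}$ of $c^{\langle N-1\rangle}$, and the coefficient $q^{\mu-(N-1)}$ of $y$ in $c(x,y;\mu-(N-1))$. The product of these is $q^{N\mu}$, which completes the induction.

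\emph{Powers of $a$ and $b$.} Next I will record closed forms for the $q$-powers, again by induction on the exponent using the definition of $*_{\mu}$ and the $q$-Pascal rule. The expected formulas, matching those in \cite{GadouleauYan2007MacWilliamsRankMetric}, are
\begin{equation*}
b^{\langle \ell\rangle}=\sum_{u=0}^{\ell}(-1)^{u}q^{\binom{u}{2}}\qbinom{\ell}{u}{q}x^{\ell-u}y^{u}
\end{equation*}
and
\begin{equation*}
a^{\langle j\rangle}=\sum_{u=0}^{j}\qbinom{j}{u}{q}\alpha(\mu,u)\,x^{j-u}y^{u},
\end{equation*}
where $\alpha(\mu,u)=\prod_{i=0}^{u-1}(q^{\mu}-q^{i})$. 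Note that $b^{\langle \ell\rangle}$ does not depend on $\mu$.

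\emph{Right-hand side.} I will then expand $b^{\langle \ell\rangle}*_{\mu}a^{\langle s-\ell\rangle}$ with the $q$-product rule. The coefficient of $x^{s-u}y^{u}$ is
\begin{equation*}
\sum_{i}q^{i(s-\ell)}(-1)^{i}q^{\binom{i}{2}}\qbinom{\ell}{i}{q}\qbinom{s-\ell}{u-i}{q}\alpha(\mu-i,u-i).
\end{equation*}
I multiply this by $(-1)^{\ell}q^{\binom{\ell}{2}}\qbinom{s}{\ell}{q}$ and sum over $\ell$. The next step is to exchange the order of summation, putting the sum over $\ell$ innermost for fixed $i$ and $u$. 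Using $\qbinom{s}{\ell}{q}\qbinom{\ell}{i}{q}=\qbinom{s}{i}{q}\qbinom{s-i}{\ell-i}{q}$, the inner sum over $\ell$ should collapse through Lemma~\ref{lem:q-Pochhammer}, applied with a suitable power of $q$ for $z$. The goal is that this leaves, for each $u$, an expression proportional to a product of the form $g_{s-u}(1,1)$ or a similar $q$-Pochhammer product that vanishes unless $u=s$. That vanishing kills every coefficient with $u<s$.

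\emph{Top coefficient.} For $u=s$ only $i=\ell$ survives in each summand, and the remaining quantity should simplify to $q^{s\mu}$. This uses $\alpha(\mu-\ell,s-\ell)\,q^{\ell(s-\ell)}\cdots$ together with one more application of Lemma~\ref{lem:q-Pochhammer}, with $z=q^{\mu-\ell+\cdots}$, to telescope to $q^{s\mu}$.

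\emph{Main obstacle.} The delicate step is the bookkeeping of the $\mu$-shifts $\alpha(\mu-i,\cdot)$ inside the double sum, which must be arranged so that Lemma~\ref{lem:q-Pochhammer} applies cleanly. If the direct collapse is awkward, I would instead argue by induction on $s$. Writing $c^{\langle s+1\rangle}=c^{\langle s\rangle}*_{\mu}(a-b)$, the remaining work is a commutation rule for moving $b$ past $a^{\langle j\rangle}$, namely expressing $b^{\langle \ell\rangle}*_{\mu}a^{\langle j\rangle}*_{\mu}b$ in terms of $b^{\langle \ell+1\rangle}*_{\mu}a^{\langle j\rangle}$ with an explicit power of $q$. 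Combined with the $q$-Pascal identity $\qbinom{s+1}{\ell}{q}=\qbinom{s}{\ell-1}{q}+q^{\ell}\qbinom{s}{\ell}{q}$, this rule would close the induction.
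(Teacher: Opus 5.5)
Your plan is correct, and it takes a different route from the paper's.

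\textbf{The paper's route.} The paper argues structurally. Since $c*_{\mu}b=q\,(b*_{\mu}c)$, Koornwinder's $q$-binomial theorem gives
\[
a^{\langle s\rangle}=(b+c)^{\langle s\rangle}=\sum_{i}\qbinom{s}{i}{q}b^{\langle i\rangle}*_{\mu}c^{\langle s-i\rangle}.
\]
The lemma is then obtained by inverting this Gaussian transform. That step implicitly needs associativity of $*_{\mu}$, both for Koornwinder's theorem and to merge $b^{\langle \ell\rangle}*_{\mu}b^{\langle i\rangle}=b^{\langle \ell+i\rangle}$ before collapsing with $g_{k}(1,1)=\delta_{k,0}$. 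This is short and explains why the identity holds, but it rests on external results and leaves the $q$-commutation unstated.

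\textbf{Your route.} Your coefficient comparison is self-contained. It uses only the recursive definition of $q$-powers and the formula for $*_{\mu}$. Your main line never reassociates a product, so you do not actually need associativity. The price is heavier bookkeeping.

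\textbf{Two places in your sketch need sharpening.}

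First, in the inner sum over $\ell$ the factor $\qbinom{s-\ell}{u-i}{q}$ still depends on $\ell$, so the identity you quote is not enough. Add
\[
\qbinom{s-i}{\ell-i}{q}\qbinom{s-\ell}{u-i}{q}=\qbinom{s-i}{u-i}{q}\qbinom{s-u}{\ell-i}{q}.
\]
Since $\binom{\ell}{2}+i(s-\ell)=\binom{\ell-i}{2}+\binom{i}{2}+i(s-i)$, Lemma~\ref{lem:q-Pochhammer} then applies with $z=1$, not a nontrivial power of $q$. It produces exactly the factor $g_{s-u}(1,1)$ you anticipated.

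Second, for the top coefficient note that $q^{\ell(s-\ell)}\alpha(\mu-\ell,s-\ell)=g_{s-\ell}(q^{\mu},q^{\ell})$. The identity you need is therefore
\[
\sum_{\ell=0}^{s}q^{2\binom{\ell}{2}}\qbinom{s}{\ell}{q}\,g_{s-\ell}(q^{\mu},q^{\ell})=q^{s\mu}.
\]
\begin{enumerate*}[label=(\roman*)]
\item Expand $g_{s-\ell}(q^{\mu},q^{\ell})=\sum_{p}(-1)^{p}q^{\binom{p}{2}+\ell p}\qbinom{s-\ell}{p}{q}q^{\mu(s-\ell-p)}$ by Lemma~\ref{lem:q-Pochhammer}.
\item Regroup by $k=\ell+p$, using $\qbinom{s}{\ell}{q}\qbinom{s-\ell}{k-\ell}{q}=\qbinom{s}{k}{q}\qbinom{k}{\ell}{q}$ and $2\binom{\ell}{2}+\binom{k-\ell}{2}+\ell(k-\ell)=\binom{\ell}{2}+\binom{k}{2}$.
\item The inner sum becomes $(-1)^{k}g_{k}(1,1)$, so only $k=0$ survives.
\end{enumerate*}
This is two applications of the lemma rather than a telescoping. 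With these filled in, your fallback induction is unnecessary.
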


\begin{proof}
From \cite[Proposition~2.1]{Koornwinder1997}, it holds that
\begin{equation*}
a^{\langle s \rangle}=(b+c)^{\langle s \rangle}=\sum_{i=0}^{s}\qbinom{s}{i}{q}b^{\langle i \rangle}*_{\mu}c^{\langle s - i \rangle}. 
\end{equation*}
Applying the inversion with respect to the convolution inverse in the incidence algebra of the subspace lattice $\mathcal{L}(\mathbb F_q^s)$ gives the desired formula.
\end{proof}

\begin{dfn}
For all $j \in \mathbb{Z}_{\geq 0}$, we define $\P^{[j]} \coloneqq (E,\rho^{[j]})$, where $\rho^{[j]}\colon \mathcal{L}(E)\to \mathbb Z_{\geq 0}$ is defined as $\rho^{[j]}(X) \coloneqq j \rho(X)$ for every $X\in\mathcal{L}(E)$.
\end{dfn}

\begin{prop}
For every nonnegative integer $j$, $\P^{[j]} \coloneqq (E,\rho^{[j]})$ is a $(q,mj)$-polymatroid.
\end{prop}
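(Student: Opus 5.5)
We check the three axioms \ref{r1}--\ref{r3} of Definition~\ref{dfn:qm_polymatroid} for $\rho^{[j]}$ with the parameter $r$ replaced by $mj$. Each one follows from the corresponding axiom for $\rho$, multiplied by the nonnegative integer $j$. The plan is to handle the axioms in order, noting first that $\rho^{[j]}$ takes values in $\mathbb{Z}_{\geq 0}$ because $\rho$ does and $j \geq 0$.

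For \ref{r1}, take $A \leq E$. Since $0 \leq \rho(A) \leq m\dim A$ holds for $\P$, multiplying by $j \geq 0$ gives
\begin{equation*}
0 \leq j\rho(A) = \rho^{[j]}(A) \leq jm\dim A = (mj)\dim A.
\end{equation*}
For \ref{r2}, if $A \leq B$ then $\rho(A) \leq \rho(B)$, and multiplying by $j \geq 0$ preserves the inequality, so $\rho^{[j]}(A) \leq \rho^{[j]}(B)$. For \ref{r3}, multiplying the submodular inequality $\rho(A+B)+\rho(A\cap B) \leq \rho(A)+\rho(B)$ by $j$ gives the same inequality for $\rho^{[j]}$.

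There is no real obstacle here. The only points needing care are that $j$ is nonnegative, so the inequalities are not reversed, and that the bound in \ref{r1} is exactly $mj\dim A$, which is the defining constant for a $(q,mj)$-polymatroid. The degenerate case $j=0$ gives $\rho^{[0]}\equiv 0$, which is consistent: it is a $(q,0)$-polymatroid since $0 \leq 0 \leq 0\cdot\dim A$.
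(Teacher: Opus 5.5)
Your proof is correct and is the same argument the paper gives: multiply each of (R1)--(R3) for $\rho$ by the nonnegative integer $j$, noting that the bound in (R1) becomes $mj\dim A$. Your remark on the degenerate case $j=0$ is a harmless extra; it is consistent with Definition~\ref{dfn:qm_polymatroid}, which allows any nonnegative parameter.
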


\begin{proof}
Since $\rho$ is a nonnegative integer-valued function and $j$ is a nonnegative integer, multiplying the inequalities (R1), (R2), and (R3) for $\rho$ by $j$ shows that $\rho^{[j]}$ satisfies (R1), (R2), and (R3).
\end{proof}

\begin{lem} \label{lem:duality-for-Pj}
For all $j\in \mathbb{Z}_{\geq 0}$,
\begin{equation*}
  \bigl( \P^{[j]} \bigr)^\ast=(\P^\ast)^{[j]}.
\end{equation*}
\end{lem}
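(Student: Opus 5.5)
The plan is a direct computation from Definition~\ref{dfn:dual}, comparing the two rank functions on an arbitrary subspace $J \in \mathcal{L}(E)$. Both sides live on the same ground space $E$, so it suffices to show that $(\rho^{[j]})^{\ast}(J) = (\rho^{\ast})^{[j]}(J)$ for every $J \leq E$.

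The one point that needs care, and which I expect to be the only real obstacle, is which parameter enters the duality. In Definition~\ref{dfn:dual} the term $m\dim J$ comes from the parameter of the polymatroid being dualized. By the preceding proposition, $\P^{[j]}$ is a $(q,mj)$-polymatroid, so its dual must be formed with $mj$ in place of $m$. Meanwhile $\P^{\ast}$ is the dual of the $(q,m)$-polymatroid $\P$, formed with $m$. I will state this convention explicitly at the start, since the identity would be false if $\P^{[j]}$ were dualized with parameter $m$.

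With this convention, the left-hand side is
\begin{equation*}
(\rho^{[j]})^{\ast}(J) = \rho^{[j]}(J^{\perp}) + mj\dim J - \rho^{[j]}(E) = j\rho(J^{\perp}) + jm\dim J - j\rho(E).
\end{equation*}
Factoring out $j$ gives $j\bigl(\rho(J^{\perp}) + m\dim J - \rho(E)\bigr) = j\rho^{\ast}(J)$. By the definition of the $[j]$-construction applied to $\P^{\ast}$, this equals $(\rho^{\ast})^{[j]}(J)$.

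Since $J$ was arbitrary, the two rank functions agree, and hence $(\P^{[j]})^{\ast} = (\P^{\ast})^{[j]}$. The case $j=0$ is consistent: both sides are the zero function, giving the trivial $(q,0)$-polymatroid.
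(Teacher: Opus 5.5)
Your proposal is correct and matches the paper's proof: both compute $(\rho^{[j]})^{\ast}(X) = j\rho(X^{\perp}) + mj\dim X - j\rho(E)$ and factor out $j$ to obtain $(\rho^{\ast})^{[j]}(X)$. Your explicit remark that $\P^{[j]}$ must be dualized with parameter $mj$ usefully states a convention the paper uses only implicitly.
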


\begin{proof}
For every $X\in\mathcal{L}(E)$,
\begin{equation*}
  \bigl(\rho^{[j]}\bigr)^\ast(X)=j\rho(X^\perp)+mj\dim X-j\rho(E)
  =j(\rho(X^\perp)+m\dim X-\rho(E))
  =(\rho^\ast)^{[j]}(X). \qedhere
\end{equation*}
\end{proof}

\begin{dfn}
For every nonnegative integer $r$, we define the $r$-th higher weight polynomial of a $(q,m)$-polymatroid $\P$ by
\begin{equation*}
\qmpolyhigherweights{\P}{r}(x,y)\coloneqq\sum_{U\leq E}\qbinom{\rho(E)-\rho(U)}{r}{q}y^{n-\dim U}g_{\dim U}(x,y).
\end{equation*}
\end{dfn}

\begin{dfn}
We define
\begin{equation*}
\qmpolyhigherweightsg{\P}(x,y)\coloneqq\sum_{U\leq E}q^{\rho(E)-\rho(U)}y^{n-\dim U}g_{\dim U}(x,y).
\end{equation*}
For every nonnegative integer $j$, we define
\begin{equation*}
\qmpolyhigherweightsg{\P}{j}(x,y) \coloneqq 
\qmpolyhigherweightsg{\P^{[j]}}(x,y)=\sum_{U\leq E}q^{j(\rho(E)-\rho(U))}y^{n-\dim U}g_{\dim U}(x,y).
\end{equation*}
\end{dfn}

\begin{lem}\label{lem:DualTheckeningCommutative}
For all $j\in \mathbb{Z}_{\geq 0}$,
\begin{equation*}
\qmpolyhigherweightsg{\P}{j}(x,y)=\sum_{\ell=0}^{j}g_\ell(q^j,1)\qmpolyhigherweights{\P}{\ell}(x,y)
\end{equation*}
and
\begin{equation*}
\qmpolyhigherweights{\P}{j}(x,y)=\frac{1}{g_j(q^j,1)}\sum_{\ell=0}^{j}(-1)^{j-\ell}q^{\binom{j-\ell}{2}}\qbinom{j}{\ell}{q}\qmpolyhigherweightsg{\P}{\ell}(x,y).
\end{equation*}
\end{lem}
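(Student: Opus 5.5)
The plan is to reduce both identities to pointwise identities between scalar coefficients, one for each term $U \le E$. Then sum against the common weight $y^{n-\dim U}g_{\dim U}(x,y)$. Fix $U$ and put $a \coloneqq \rho(E)-\rho(U) \in \mathbb{Z}_{\geq 0}$. By definition, the coefficient of $y^{n-\dim U}g_{\dim U}(x,y)$ is $q^{ja}$ in $\qmpolyhigherweightsg{\P}{j}$ and $\qbinom{a}{\ell}{q}$ in $\qmpolyhigherweights{\P}{\ell}$. Since both sides of each claimed identity are $\mathbb{Z}$-linear combinations over the same index set $U\le E$ with the same weights, it suffices to prove, for every integer $a\ge 0$ and $j\ge 0$,
\begin{equation*}
q^{ja}=\sum_{\ell=0}^{j} g_\ell(q^j,1)\qbinom{a}{\ell}{q}
\qquad\text{and}\qquad
\qbinom{a}{j}{q}=\frac{1}{g_j(q^j,1)}\sum_{\ell=0}^{j}(-1)^{j-\ell}q^{\binom{j-\ell}{2}}\qbinom{j}{\ell}{q}q^{\ell a}.
\end{equation*}

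The second scalar identity has essentially been proved already, in Lemma~\ref{lem:q-binom-expansion}. Take $t=1$ there when $j\ge 1$. The exponent $i(t-1)+i\max\{0,a\}$ then equals $ia$, and the formula is exactly the one needed. The case $j=0$ is trivial, since both sides equal $1$. Alternatively, one can apply Lemma~\ref{lem:q-Pochhammer} with $N=j$ and $z=q^{a-j+1}$, after reindexing $i\mapsto j-i$, to get $g_j(q^a,1)$. Then use $g_j(q^a,1)=\qbinom{a}{j}{q}g_j(q^j,1)$; this holds also when $a<j$, because both sides vanish.

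For the first scalar identity I will use the standard $q$-Vandermonde counting argument: $q^{ja}$ counts the linear maps $\mathbb{F}_q^a\to\mathbb{F}_q^j$, or dually the $j\times a$ matrices. Classify each map $\mathbb{F}_q^j\to \mathbb{F}_q^a$ by its image $V$, of some dimension $\ell\le j$. There are $\qbinom{a}{\ell}{q}$ choices of $V$. The number of surjections $\mathbb{F}_q^j\to V$ equals the number of injections $V^{*}\to(\mathbb{F}_q^{j})^{*}$, which is $\prod_{i=0}^{\ell-1}(q^j-q^i)=g_\ell(q^j,1)$. Summing over $\ell$ gives $q^{ja}=\sum_\ell g_\ell(q^j,1)\qbinom{a}{\ell}{q}$. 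If $a<\ell$, then $\qbinom{a}{\ell}{q}=0$, so running the sum all the way to $j$ is harmless.

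The main point of care is that the two identities are mutually inverse, so proving each directly avoids any inversion argument. The only remaining subtleties are edge cases: $a<j$, where the extended Gaussian coefficient vanishes, and $j=0$, where empty products and $g_0=1$ give both sides equal to $1$. Once the scalar identities hold, multiply by $y^{n-\dim U}g_{\dim U}(x,y)$ and sum over $U\le E$. Exchanging the finite sums over $U$ and $\ell$ then yields both displayed identities.
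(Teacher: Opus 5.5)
Your proof is correct. For the first identity you follow the paper; for the second you take a genuinely different route.

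\textbf{First identity.} Both you and the paper reduce it to the termwise scalar expansion $q^{ja}=\sum_{\ell=0}^{j}g_\ell(q^j,1)\qbinom{a}{\ell}{q}$ with $a=\rho(E)-\rho(U)\ge 0$, and then sum over $U\le E$. The paper simply asserts this expansion. Your count of linear maps $\mathbb{F}_q^j\to\mathbb{F}_q^a$ by their image actually proves it.

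\textbf{Second identity.} The paper derives it from the first identity at the level of polynomials, by M\"obius (Gaussian) inversion. Implicitly this uses $g_\ell(q^j,1)=\qbinom{j}{\ell}{q}g_\ell(q^\ell,1)$ to read the first identity as a $q$-binomial transform of the sequence $g_\ell(q^\ell,1)V_{P}^{(\ell)}$. It also needs the first identity for all indices up to $j$. You instead verify the second scalar identity directly, noting that it is exactly Lemma~\ref{lem:q-binom-expansion} with $t=1$, plus the trivial case $j=0$.

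\textbf{Comparison.} The paper's route needs only one scalar identity and displays the two formulas as mutually inverse transforms. Yours treats each formula independently for a single $j$ and avoids the inversion machinery. It also reuses a lemma the paper has already proved.

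\textbf{A small slip in your alternative remark.} Reindexing $\ell\mapsto j-\ell$ leads to Lemma~\ref{lem:q-Pochhammer} with $z=q^{-a}$ and prefactor $q^{ja}$. The choice $z=q^{a-j+1}$ instead corresponds to keeping the index and using $\binom{j-\ell}{2}=\binom{j}{2}+\binom{\ell}{2}-\ell(j-1)$. Both routes give $g_j(q^a,1)$, so your argument is unaffected.
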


\begin{proof}
Using $q^{j(\rho(E)-\rho(U))}=\sum_{\ell=0}^{j}g_\ell(q^j,1)\qbinom{\rho(E)-\rho(U)}{\ell}{q}$, we obtain
\begin{align*}
\qmpolyhigherweightsg{\P}{j}(x,y)&\coloneqq\sum_{U\leq E}q^{j(\rho(E)-\rho(U))}y^{n-\dim U}g_{\dim U}(x,y)\\
&=\sum_{U\leq E}\sum_{\ell=0}^{j}g_\ell(q^j,1)\qbinom{\rho(E)-\rho(U)}{\ell}{q}y^{n-\dim U}g_{\dim U}(x,y)\\
&=\sum_{\ell=0}^{j}g_\ell(q^j,1)\qmpolyhigherweights{\P}{\ell}(x,y).
\end{align*}
This proves the first formula. Applying M\"obius inversion gives
\begin{equation*}
g_j(q^j,1)\qmpolyhigherweights{\P}{j}(x,y)=\sum_{\ell=0}^{j}(-1)^{j-\ell}q^{\binom{j-\ell}{2}}\qbinom{j}{\ell}{q}\qmpolyhigherweightsg{\P}{\ell}(x,y).
\end{equation*}
Dividing both sides by $g_j(q^j,1)$ proves the second formula.
\end{proof}

\begin{lem}
Let $\mu, s \in \mathbb{Z}_{\geq 0}$ with $0\leq s\leq n$. Then,
\begin{equation*}
\overline{h_s}^{(\mu)}(x+(q^\mu-1)y,x-y)=q^{\mu s}y^s g_{n-s}(x,y),
\end{equation*}
where $h_s(x,y)\coloneqq y^{n-s}g_s(x,y)$. 
\end{lem}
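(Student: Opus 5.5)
The plan is to expand $h_s$ in monomials, apply the $q$-transform term by term, and recognise the resulting sum as the inversion formula of the preceding lemma, multiplied on the left by a common $q$-power. Throughout I use the convention of \cite{GadouleauYan2007MacWilliamsRankMetric}: evaluating $\overline{F}^{(\mu)}$ at $(a,b)$ means replacing $x^{\langle k\rangle}$ by $a^{\langle k\rangle}$ and $y^{\langle k\rangle}$ by $b^{\langle k\rangle}$. I also rely on the facts established there (and in \cite{Koornwinder1997}) that the $q$-product is associative and that $b^{\langle p\rangle}*_\mu b^{\langle \ell\rangle}=b^{\langle p+\ell\rangle}$.

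\emph{Step 1: expand $h_s$.} Homogenising Lemma~\ref{lem:q-Pochhammer} gives $g_s(x,y)=\sum_{\ell=0}^{s}(-1)^\ell q^{\binom{\ell}{2}}\qbinom{s}{\ell}{q}x^{s-\ell}y^\ell$. Hence
\begin{equation*}
h_s(x,y)=\sum_{\ell=0}^{s}(-1)^\ell q^{\binom{\ell}{2}}\qbinom{s}{\ell}{q}\,x^{s-\ell}y^{n-s+\ell}.
\end{equation*}
The coefficient $f_i$ is therefore nonzero only for $i=n-s+\ell$.

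\emph{Step 2: apply the $q$-transform.} With $a=x+(q^\mu-1)y$ and $b=x-y$, the transform becomes
\begin{equation*}
\overline{h_s}^{(\mu)}(a,b)=\sum_{\ell=0}^{s}(-1)^\ell q^{\binom{\ell}{2}}\qbinom{s}{\ell}{q}\,b^{\langle n-s+\ell\rangle}*_\mu a^{\langle s-\ell\rangle}.
\end{equation*}
Splitting $b^{\langle n-s+\ell\rangle}=b^{\langle n-s\rangle}*_\mu b^{\langle\ell\rangle}$ and using associativity, I pull $b^{\langle n-s\rangle}$ out on the left. By the preceding lemma, the remaining sum is exactly $c^{\langle s\rangle}$ with $c=q^\mu y$. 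So the expression equals $b^{\langle n-s\rangle}*_\mu c^{\langle s\rangle}$.

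\emph{Step 3: compute both factors.} For $c$, induction on $s$ shows $c^{\langle s\rangle}=q^{s\mu}y^s$. In the product $c^{\langle s\rangle}*_\mu c$, the only surviving term has $i=s$, giving coefficient $q^{s}\cdot q^{s\mu}\cdot q^{\mu-s}=q^{(s+1)\mu}$. For $b$, whose coefficients do not depend on $\mu$, one has $(x-y)^{\langle p\rangle}=\sum_u(-1)^uq^{\binom u2}\qbinom{p}{u}{q}x^{p-u}y^u=g_p(x,y)$. This is the standard $q$-binomial theorem for $q$-powers; I would prove it by induction via the $q$-Pascal rule, or cite \cite{GadouleauYan2007MacWilliamsRankMetric}.

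Finally I compute $g_{n-s}*_\mu(q^{s\mu}y^s)$. Since the second factor has only its degree-$s$ coefficient nonzero, the coefficient of $x^{n-u}y^u$ reduces to the single term $i=u-s$:
\begin{equation*}
q^{is}\,[g_{n-s}]_i\,q^{s(\mu-i)}=q^{s\mu}[g_{n-s}]_i.
\end{equation*}
This yields $q^{\mu s}y^sg_{n-s}(x,y)$, as claimed.

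The main obstacle I expect is Step 2. There one must carefully justify factoring $b^{\langle n-s\rangle}$ out of $b^{\langle n-s+\ell\rangle}*_\mu a^{\langle s-\ell\rangle}$, because the $q$-product is neither commutative nor compatible with ordinary substitution. This requires the associativity of $*_\mu$ and the additivity $b^{\langle p\rangle}*_\mu b^{\langle \ell\rangle}=b^{\langle p+\ell\rangle}$ for $q$-powers, both of which I take from \cite{GadouleauYan2007MacWilliamsRankMetric}. The $\mu$-shifts in Step 3 also need care, but they are routine.
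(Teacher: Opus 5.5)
Your proposal is correct and follows the paper's proof essentially step for step: expand $h_s$ via Lemma~\ref{lem:q-Pochhammer}, apply the $q$-transform, factor $(x-y)^{\langle n-s\rangle}$ out on the left using associativity, collapse the remaining sum to $(q^\mu y)^{\langle s\rangle}$ by the preceding lemma, and evaluate. Your Step~3 also checks the final identity $(x-y)^{\langle n-s\rangle}*_\mu(q^\mu y)^{\langle s\rangle}=q^{\mu s}y^s g_{n-s}(x,y)$, including the $\mu$-shifts, which the paper states without comment.
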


\begin{proof}
From Lemma~\ref{lem:q-Pochhammer}, we have
\begin{equation*}
h_s(x,y)
=y^{n-s}\sum_{a=0}^{s}(-1)^a q^{\binom a2}\qbinom{s}{a}{q}x^{s-a}y^a
=\sum_{a=0}^{s}(-1)^a q^{\binom a2}\qbinom{s}{a}{q}x^{s-a}y^{n-s+a}.
\end{equation*}
Hence, 
\begin{align*}
\overline{h_s}^{(\mu)}\bigl(x+(q^\mu-1)y,x-y\bigr)&=\sum_{a=0}^{s}(-1)^a q^{\binom a2}\qbinom{s}{a}{q}(x-y)^{\langle n - s + a \rangle}\ast_{\mu}\bigl(x+(q^\mu-1)y\bigr)^{\langle s- a \rangle}\\
&=(x-y)^{\langle n - s \rangle}\ast_{\mu}\sum_{a=0}^{s}(-1)^a q^{\binom a2}\qbinom{s}{a}{q}(x-y)^{\langle a \rangle}\ast_{\mu}\bigl(x+(q^\mu-1)y\bigr)^{\langle s - a \rangle}\\
&=(x-y)^{\langle n - s \rangle}\ast_{\mu}\bigl(x+(q^\mu-1)y-(x-y)\bigr)^{\langle s \rangle}\\
&=(x-y)^{\langle n - s \rangle}\ast_{\mu}(q^\mu y)^{\langle s \rangle}\\
&=g_{n-s}(x,y)(q^\mu y)^s\\
&=q^{\mu s}y^s g_{n-s}(x,y). \qedhere
\end{align*}
\end{proof}

\begin{lem}
For all $j \in \mathbb{Z}_{\geq 0}$, 
\begin{equation*}
\qmpolyhigherweightsg{\P^\ast}{j}(x,y)
= q^{-j\rho(E)}\overline{\qmpolyhigherweightsg{\P^{[j]}}}^{(mj)}(x+(q^{mj}-1)y,x-y).
\end{equation*}
\end{lem}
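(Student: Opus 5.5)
The plan is to compute the right-hand side directly. I will expand $\qmpolyhigherweightsg{\P^{[j]}}$ in the basis $h_s(x,y)=y^{n-s}g_s(x,y)$, push the $q$-transform through term by term using the preceding lemma (the one computing $\overline{h_s}^{(\mu)}(x+(q^\mu-1)y,x-y)$), and then re-index the sum by orthogonal complements. I expect the result to coincide term by term with the definition of $\qmpolyhigherweightsg{\P^\ast}{j}$.

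First I would record that, by definition,
\begin{equation*}
\qmpolyhigherweightsg{\P^{[j]}}(x,y)=\sum_{U\le E}q^{j(\rho(E)-\rho(U))}\,h_{\dim U}(x,y).
\end{equation*}
This is a $\mathbb{Z}$-linear combination of the homogeneous degree-$n$ polynomials $h_s$. The $q$-transform $F\mapsto\overline{F}^{(\mu)}$ is defined coefficientwise from the monomial coefficients $f_i$, so it is linear in $F$. Evaluating at $(x+(q^\mu-1)y,\,x-y)$, understood as in the preceding lemma via the $q$-powers of $a$ and $b$, is linear as well. Applying the preceding lemma with $\mu=mj$ then gives
\begin{equation*}
q^{-j\rho(E)}\overline{\qmpolyhigherweightsg{\P^{[j]}}}^{(mj)}(x+(q^{mj}-1)y,x-y)
=\sum_{U\le E}q^{-j\rho(U)+mj\dim U}\,y^{\dim U}g_{n-\dim U}(x,y).
\end{equation*}

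Next I substitute $U=X^{\perp}$, which is a bijection of $\mathcal{L}(E)$ with $\dim U=n-\dim X$. The general term becomes
\begin{equation*}
q^{\,j(mn-m\dim X-\rho(X^\perp))}\,y^{\,n-\dim X}g_{\dim X}(x,y).
\end{equation*}
On the other side, Definition~\ref{dfn:dual} together with $\rho(0)=0$ (from (R1)) gives $\rho^\ast(E)=mn-\rho(E)$ and $\rho^\ast(X)=\rho(X^\perp)+m\dim X-\rho(E)$. Hence
\begin{equation*}
\rho^\ast(E)-\rho^\ast(X)=mn-m\dim X-\rho(X^\perp).
\end{equation*}
The exponents therefore agree, and the sum equals $\sum_{X}q^{j(\rho^\ast(E)-\rho^\ast(X))}y^{n-\dim X}g_{\dim X}(x,y)=\qmpolyhigherweightsg{\P^\ast}{j}(x,y)$. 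Alternatively, one could phrase this last step through Lemma~\ref{lem:duality-for-Pj}.

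The only delicate point is justifying linearity of the transform-then-evaluate operation, since the $q$-product depends on the parameter $\mu$ and is not ordinary multiplication. I would handle it by noting that $\overline{F}^{(\mu)}(a,b)=\sum_i f_i\, b^{\langle i\rangle}*_\mu a^{\langle n-i\rangle}$ is visibly additive in the coefficient vector $(f_i)$. This is exactly the form used in the proof of the preceding lemma, so the term-by-term application is legitimate.
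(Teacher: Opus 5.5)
Your proposal is correct and follows essentially the same route as the paper. Both arguments expand $\mathcal{V}_{P^{[j]}}$ in the basis $h_s$, apply the preceding lemma term by term with $\mu=mj$, and re-index by orthogonal complements. The only cosmetic difference is that the paper first proves $q^{-\sigma(E)}\overline{\mathcal{V}_{Q}}^{(\mu)}\bigl(x+(q^{\mu}-1)y,x-y\bigr)=\mathcal{V}_{Q^{\ast}}(x,y)$ for an arbitrary $(q,\mu)$-polymatroid $Q=(E,\sigma)$, and then specializes to $Q=P^{[j]}$ via Lemma~\ref{lem:duality-for-Pj}; you instead do that duality computation inline, and you make the uses of $\rho(0)=0$ and of linearity explicit where the paper leaves them implicit.
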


\begin{proof}
Let $Q=(E,\sigma)$ be a $(q,\mu)$-polymatroid, where $\mu$ is a nonnegative integer. Note that
\begin{equation*}
\qmpolyhigherweightsg{Q}(x,y)=\sum_{U\leq E}q^{\sigma(E)-\sigma(U)}y^{n-\dim U}g_{\dim U}(x,y)=\sum_{U\leq E}q^{\sigma(E)-\sigma(U)}h_{\dim U}(x,y).
\end{equation*}
Then
\begin{align*}
q^{-\sigma(E)}\overline{\qmpolyhigherweightsg{Q}}^{(\mu)}\bigl(x+(q^\mu-1)y,x-y\bigr)&=q^{-\sigma(E)}\sum_{U\leq E}q^{\sigma(E)-\sigma(U)}\overline{h_{\dim U}}^{(\mu)}\bigl(x+(q^\mu-1)y,x-y\bigr)\\
&=q^{-\sigma(E)}\sum_{U\leq E}q^{\sigma(E)-\sigma(U)}q^{\mu\dim U}y^{\dim U}g_{n-\dim U}(x,y)\\
&=\sum_{U\leq E}q^{\mu\dim U-\sigma(U)}y^{\dim U}g_{n-\dim U}(x,y).
\end{align*}
On the other hand,
\begin{align*}
\qmpolyhigherweightsg{Q^{\ast}}(x,y)&=\sum_{J\leq E}q^{\sigma^\ast(E)-\sigma^\ast(J)}y^{n-\dim J}g_{\dim J}(x,y)\\
&=\sum_{J\leq E}q^{(\mu n-\sigma(E))-(\sigma(J^\perp)+\mu\dim J-\sigma(E))}y^{n-\dim J}g_{\dim J}(x,y)\\
&=\sum_{J\leq E}q^{\mu(n-\dim J)-\sigma(J^\perp)}y^{n-\dim J}g_{\dim J}(x,y)\\
&=\sum_{U\leq E}q^{\mu\dim U-\sigma(U)}y^{\dim U}g_{n-\dim U}(x,y),
\end{align*}
where $U=J^\perp$. Taking $Q=\P^{[j]}$ and applying Lemma~\ref{lem:duality-for-Pj}, we prove the assertion.
\end{proof}

\begin{thm}
For all $r\in \mathbb{Z}_{\geq 0}$,
\begin{equation*}
\qmpolyhigherweights{\P^{\ast}}{r}(x,y)=\sum_{j=0}^{r}\sum_{\ell=0}^{j}\frac{(-1)^{r-j}q^{\binom{r-j}{2}-j(r-j)-\ell(j-\ell)-j\rho(E)}}{g_{r-j}(q^{r-j},1)\,g_{j-\ell}(q^{j-\ell},1)}\,\overline{\qmpolyhigherweights{\P}{\ell}}^{(mj)}\bigl(x+(q^{mj}-1)y,x-y\bigr).
\end{equation*}
\end{thm}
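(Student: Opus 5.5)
The plan is to pass from the higher weight polynomials $\qmpolyhigherweights{\P^{\ast}}{r}$ to the auxiliary polynomials $\qmpolyhigherweightsg{\P^{\ast}}{j}$. We then dualize those using the preceding lemma, and finally return to the $\qmpolyhigherweights{\P}{\ell}$. The right-hand side is a double sum whose structure mirrors exactly these three steps.

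First, apply the second formula of Lemma~\ref{lem:DualTheckeningCommutative} to the $(q,m)$-polymatroid $\P^{\ast}$ (the lemma holds for any $(q,m)$-polymatroid, and $\P^{\ast}$ is one). This gives
\begin{equation*}
\qmpolyhigherweights{\P^{\ast}}{r}(x,y)=\frac{1}{g_r(q^r,1)}\sum_{j=0}^{r}(-1)^{r-j}q^{\binom{r-j}{2}}\qbinom{r}{j}{q}\qmpolyhigherweightsg{\P^{\ast}}{j}(x,y).
\end{equation*}
Next, replace each $\qmpolyhigherweightsg{\P^{\ast}}{j}$ by $q^{-j\rho(E)}\overline{\qmpolyhigherweightsg{\P^{[j]}}}^{(mj)}(x+(q^{mj}-1)y,x-y)$, using the lemma immediately preceding the theorem. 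By definition, $\qmpolyhigherweightsg{\P^{[j]}}=\qmpolyhigherweightsg{\P}{j}$. Then expand this with the first formula of Lemma~\ref{lem:DualTheckeningCommutative}:
\begin{equation*}
\qmpolyhigherweightsg{\P}{j}=\sum_{\ell=0}^{j}g_\ell(q^j,1)\qmpolyhigherweights{\P}{\ell}.
\end{equation*}
The $q$-transform $F\mapsto\overline{F}^{(mj)}$ is linear in the coefficients of $F$, since it is defined coefficientwise on homogeneous polynomials of degree $n$. Every $\qmpolyhigherweights{\P}{\ell}$ is homogeneous of degree $n$. Hence the transform, followed by the substitution $(x,y)\mapsto(x+(q^{mj}-1)y,x-y)$, can be pushed inside the sum over $\ell$.

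What remains is to identify the scalar coefficient. We need
\begin{equation*}
\frac{1}{g_r(q^r,1)}\qbinom{r}{j}{q}g_\ell(q^j,1)=\frac{q^{-j(r-j)-\ell(j-\ell)}}{g_{r-j}(q^{r-j},1)\,g_{j-\ell}(q^{j-\ell},1)}.
\end{equation*}
For this, use $\qbinom{r}{j}{q}=g_j(q^r,1)/g_j(q^j,1)$. Also factor
\begin{equation*}
g_r(q^r,1)=g_j(q^r,1)\prod_{i=j}^{r-1}(q^r-q^i)=g_j(q^r,1)\,q^{j(r-j)}g_{r-j}(q^{r-j},1).
\end{equation*}
Together these give $\qbinom{r}{j}{q}/g_r(q^r,1)=q^{-j(r-j)}/\bigl(g_j(q^j,1)g_{r-j}(q^{r-j},1)\bigr)$. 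The same factorization with $(j,\ell)$ in place of $(r,j)$ gives $g_\ell(q^j,1)/g_j(q^j,1)=q^{-\ell(j-\ell)}/g_{j-\ell}(q^{j-\ell},1)$. Multiplying these two identities, and keeping the factors $(-1)^{r-j}q^{\binom{r-j}{2}}$ and $q^{-j\rho(E)}$, yields exactly the stated coefficient.

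The only delicate step is justifying the linearity. The $q$-transform with parameter $mj$ depends on $j$, so it must be applied termwise only after $j$ is fixed. The order of summation, first $j$ and then $\ell\le j$, is therefore forced. Beyond that I expect no real obstacle: the $q$-power manipulations are already packaged in the preceding lemmas, and the rest is the bookkeeping of Gaussian coefficients above.
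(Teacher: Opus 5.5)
Your proposal is correct and is essentially the paper's proof: you apply the second formula of Lemma~\ref{lem:DualTheckeningCommutative} to $P^{\ast}$, dualize each $\mathcal{V}_{P^{\ast}}^{[j]}$ via the preceding lemma, expand $\mathcal{V}_{P}^{[j]}$ with the first formula of that lemma, and push the $q$-transform, which is linear, through the sum over $\ell$. The one addition is that you explicitly verify the coefficient identity $\qbinom{r}{j}{q}\,g_\ell(q^j,1)/g_r(q^r,1)=q^{-j(r-j)-\ell(j-\ell)}/\bigl(g_{r-j}(q^{r-j},1)\,g_{j-\ell}(q^{j-\ell},1)\bigr)$, which the paper's last equality uses without comment, and your factorization argument for it is correct.
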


\begin{proof}
Applying the preceding lemmas, we obtain
\begin{align*}
\qmpolyhigherweights{\P^{\ast}}{r}(x,y)&=\frac{1}{g_r(q^r,1)}\sum_{j=0}^{r}(-1)^{r-j}q^{\binom{r-j}{2}}\qbinom{r}{j}{q}\qmpolyhigherweightsg{\P^{\ast}}{j}(x,y)\\
&=\frac{1}{g_r(q^r,1)}\sum_{j=0}^{r}(-1)^{r-j}q^{\binom{r-j}{2}-j\rho(E)}\qbinom{r}{j}{q}\overline{\qmpolyhigherweightsg{\P}{j}}^{(mj)}\bigl(x+(q^{mj}-1)y,x-y\bigr)\\
&=\sum_{j=0}^{r}\sum_{\ell=0}^{j}\frac{(-1)^{r-j}q^{\binom{r-j}{2}-j\rho(E)}}{g_r(q^r,1)}\qbinom{r}{j}{q}g_\ell(q^j,1)\,\overline{\qmpolyhigherweights{\P}{\ell}}^{(mj)}\bigl(x+(q^{mj}-1)y,x-y\bigr)\\
&=\sum_{j=0}^{r}\sum_{\ell=0}^{j}\frac{(-1)^{r-j}q^{\binom{r-j}{2}-j(r-j)-\ell(j-\ell)-j\rho(E)}}{g_{r-j}(q^{r-j},1)\,g_{j-\ell}(q^{j-\ell},1)}\,\overline{\qmpolyhigherweights{\P}{\ell}}^{(mj)}\bigl(x+(q^{mj}-1)y,x-y\bigr). \qedhere
\end{align*}
\end{proof}

Putting $\P=\P_\mathcal{C}$, we have $\qmpolyhigherweights{\P_{\mathcal{C}}}{r} = W_\mathcal{C}^{(r)}$ by Theorem~\ref{thm:Greene_subsp} and the definition of $\qmpolyhigherweights{\P_{\mathcal{C}}}{r}$. Hence we immediately obtain the following corollary.

The following equation is a $q$-analog of \cite[Theorem~2]{Klove}.
\begin{cor}
For all $0 \leq r \leq nm - \dim \mathcal{C}$,
\begin{equation*}
W_{\mathcal{C}^\perp}^{(r)}(x,y)=\sum_{j=0}^{r}\sum_{\ell=0}^{\min\set{j, \dim \mathcal{C}}}\frac{(-1)^{r-j}q^{\binom{r-j}{2}-j(r-j)-\ell(j-\ell)-j\dim\mathcal{C}}}{g_{r-j}(q^{r-j},1)\,g_{j-\ell}(q^{j-\ell},1)}\,\overline{W_\mathcal{C}^{(\ell)}}^{(mj)}\bigl(x+(q^{mj}-1)y,x-y\bigr).
\end{equation*}
\end{cor}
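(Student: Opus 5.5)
The plan is to specialize the preceding theorem to $\P=\P_{\mathcal{C}}$ and identify both sides with higher rank-weight enumerators of $\mathcal{C}$ and $\mathcal{C}^{\perp}$.

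By Theorem~\ref{thm:Greene_subsp} and the definition of $\qmpolyhigherweights{\P}{r}$, we have $\qmpolyhigherweights{\P_{\mathcal{C}}}{\ell}=W_{\mathcal{C}}^{(\ell)}$ for $0\le \ell\le k$. Applied to $\mathcal{C}^{\perp}$ (of dimension $nm-k$), it also gives $\qmpolyhigherweights{\P_{\mathcal{C}^\perp}}{r}=W_{\mathcal{C}^{\perp}}^{(r)}$ for $0\le r\le nm-k$.

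The key step is the duality $\P_{\mathcal{C}^{\perp}}=(\P_{\mathcal{C}})^{\ast}$, i.e.
\begin{equation*}
\rho_{\mathcal{C}^\perp}(J)=\rho_{\mathcal{C}}(J^\perp)+m\dim J-\dim\mathcal{C}.
\end{equation*}
I expect this is established in \cite{Shiromoto2019}; if it is not available, I would prove it directly. Let $V_J=\{M:\supp(M)\le J\}$, which has dimension $m\dim J$. Then $V_J^{\perp}=V_{J^\perp}$ under the trace inner product, since $M\cdot N=\Tr(MN^\top)$ and column spaces that are orthogonal force the product to vanish; dimensions then match. Hence
\begin{equation*}
\mathcal{C}^\perp(J^\perp)=\mathcal{C}^\perp\cap V_{J^\perp}=(\mathcal{C}+V_J)^\perp,
\end{equation*}
so
\begin{equation*}
\dim\mathcal{C}^\perp(J^\perp)=nm-k-m\dim J+\dim\mathcal{C}(J).
\end{equation*}
Substituting into $\rho_{\mathcal{C}^\perp}(J)=(nm-k)-\dim\mathcal{C}^\perp(J^\perp)$ gives $m\dim J-\dim \mathcal{C}(J)$. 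This equals $m\dim J+\rho_{\mathcal{C}}(J^\perp)-k$, because $\rho_{\mathcal{C}}(J^\perp)=k-\dim\mathcal{C}(J)$, which proves the duality.

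Finally, substitute into the preceding theorem with $\rho(E)=k$. One remaining point is the restriction $\ell\le\min\{j,k\}$. For $\ell>k$ we have $\qbinom{\rho(E)-\rho(U)}{\ell}{q}=0$ for all $U$, since $\rho(E)-\rho(U)\le k<\ell$. Hence $\qmpolyhigherweights{\P}{\ell}=0$, its $q$-transform vanishes by linearity, and those terms can be dropped. I expect the only real work to be the duality step.
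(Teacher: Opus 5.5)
Your proposal is correct and follows the paper's route. Like the paper, you specialize the preceding theorem to $\P=\P_{\mathcal{C}}$, identify $V^{(\ell)}_{\P_{\mathcal{C}}}=W^{(\ell)}_{\mathcal{C}}$ (and likewise for $\mathcal{C}^{\perp}$) via Theorem~\ref{thm:Greene_subsp}, and use $\P_{\mathcal{C}}^{\ast}=\P_{\mathcal{C}^{\perp}}$. The only differences are that you prove this duality directly via $V_J^{\perp}=V_{J^{\perp}}$ (the paper takes it from \cite[Proposition~11]{Shiromoto2019}), and that you make explicit why the $\ell$-sum may be cut off at $\min\{j,\dim\mathcal{C}\}$; the paper leaves both points implicit.
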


\subsection{MacWilliams Identity for Generalized Binomial Moments}
We use $(q, m)$-polymatroids to give another proof of \cite[{Theorem~7.1}]{GeneralizedRankWeight}.
\begin{thm}[MacWilliams identity for generalized binomial moments]\label{thm:MacWilliams-moments}
  For all integers $0\leq j \leq n$ and $0 \leq r \leq k$,
  \begin{equation*}
    \B[r]{\mathcal{C}}{j}
    =
    \sum_{\ell=0}^{r}
    q^{\,\ell(k-m(n-j)-r+\ell)}
    \,
    \qbinom{k-m(n-j)}{r-\ell}{q}
    \B[\ell]{\mathcal{C}^{\perp}}{n-j}.
  \end{equation*}
\end{thm}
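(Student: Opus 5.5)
We argue on the $(q,m)$-polymatroid side, in two steps: first relate $\dim\mathcal{C}(W)$ to the dual code, then apply a $q$-Vandermonde identity and sum over subspaces. By Lemma~\ref{lem:B-gaussian},
\begin{equation*}
\B[r]{\mathcal{C}}{j}=\sum_{W\in\qbinom{E}{j}{q}}\qbinom{\dim\mathcal{C}(W)}{r}{q},
\end{equation*}
so everything reduces to expressing $\dim\mathcal{C}(W)$ through the dual code $\mathcal{C}^{\perp}$ at $W^{\perp}$.

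For the first step, we use the standard duality for the polymatroid $\P_{\mathcal{C}}$ (cf.\ \cite{Shiromoto2019}), namely $\P_{\mathcal{C}^{\perp}}=\P_{\mathcal{C}}^{\ast}$. At the level of subcodes it reads
\begin{equation*}
\dim\mathcal{C}(W)=k-m(n-\dim W)+\dim\mathcal{C}^{\perp}(W^{\perp}).
\end{equation*}
This follows from the observation that $\mathcal{C}(W)=\mathcal{C}\cap\Mat{n}{m}{\mathbb{F}_q}(W)$. The space $\Mat{n}{m}{\mathbb{F}_q}(W)$ has dimension $m\dim W$, and its orthogonal under the trace product is $\Mat{n}{m}{\mathbb{F}_q}(W^{\perp})$. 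Hence
\begin{equation*}
\dim(\mathcal{C}\cap V)=\dim\mathcal{C}+\dim V-nm+\dim(\mathcal{C}^{\perp}\cap V^{\perp}),
\end{equation*}
with $V=\Mat{n}{m}{\mathbb{F}_q}(W)$, which gives the displayed identity. Put $a\coloneqq k-m(n-j)$ and $b\coloneqq\dim\mathcal{C}^{\perp}(W^{\perp})$, so that $\dim\mathcal{C}(W)=a+b$ for every $W$ with $\dim W=j$.

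For the second step, we need the $q$-Vandermonde identity
\begin{equation*}
\qbinom{a+b}{r}{q}=\sum_{\ell=0}^{r}q^{\ell(a-r+\ell)}\qbinom{a}{r-\ell}{q}\qbinom{b}{\ell}{q}.
\end{equation*}
We sum this over $W\in\qbinom{E}{j}{q}$. Since $W\mapsto W^{\perp}$ is a bijection onto $\qbinom{E}{n-j}{q}$, Lemma~\ref{lem:B-gaussian} applied to $\mathcal{C}^{\perp}$ gives $\sum_{W}\qbinom{b}{\ell}{q}=\B[\ell]{\mathcal{C}^{\perp}}{n-j}$. This yields exactly the stated formula.

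The main obstacle is that $a$ may be negative. We therefore need the $q$-Vandermonde identity with integer upper argument $a$ and nonnegative $b$, using the paper's extended Gaussian coefficients. We prove it as a polynomial identity. Both sides, after the substitution $X=q^{a}$ (with $b$ and $r$ fixed), are polynomials in $X$ of degree at most $r$. The identity holds for all $a\ge 0$ by counting: choose an $r$-subspace of $\mathbb{F}_q^{a}\oplus\mathbb{F}_q^{b}$ and classify it by the dimension $\ell$ of its image in $\mathbb{F}_q^{b}$. Agreement at infinitely many values of $X$ then extends the identity to all integers $a$.

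We also need to check that negative $a$ only occurs consistently. When $a+b=\dim\mathcal{C}(W)\ge 0$, the left side is an ordinary Gaussian coefficient. The extended values $\qbinom{a}{r-\ell}{q}$ with $a<0$ are exactly what this polynomial continuation produces, so no separate case analysis is required.
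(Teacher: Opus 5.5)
Your proof is correct and follows essentially the same route as the paper: the dimension shift $\dim\mathcal{C}(W)=\dim\mathcal{C}^{\perp}(W^{\perp})+k-m(n-\dim W)$, then the $q$-Vandermonde identity with extended Gaussian coefficients, summed over $W\in\qbinom{E}{j}{q}$. The only differences are that you derive the shift directly from $\mathcal{C}(W)=\mathcal{C}\cap\Mat{n}{m}{\mathbb{F}_q}(W)$ instead of citing $\P_{\mathcal{C}}^{\ast}=\P_{\mathcal{C}^{\perp}}$, and you justify the $q$-Vandermonde identity for a negative upper argument by polynomial continuation in $q^{a}$, whereas the paper uses that identity without proof.
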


\begin{proof}
  Fix $J \in \mathcal{L}(E)$.
  From the duality $\P_{\mathcal{C}}^{\ast} = \P_{\mathcal{C}^{\perp}}$ (cf.~\cite[Proposition~11]{Shiromoto2019}),
  we obtain $\rho^{\ast}(J) = \rho(J^{\perp}) + m \dim J - \dim \mathcal{C}$.
  Substituting $\rho(J^{\perp}) = \dim \mathcal{C} - \dim \mathcal{C}(J)$,
  we have $\rho^{\ast}(J) = m \dim J - \dim \mathcal{C}(J)$.
  Combining with the calculation
  $\rho^{\ast}(J) = \dim \mathcal{C}^{\perp} - \dim \mathcal{C}^{\perp}(J^{\perp})
  = nm - k - \dim \mathcal{C}^{\perp}(J^{\perp})$
  by the definition, it follows that
  \begin{equation} \label{eq:dim_shift}
    \dim \mathcal{C}(J) = \dim \mathcal{C}^{\perp}(J^{\perp}) + k - m(n - \dim J).
  \end{equation}
  Fix $j \in \{ 0, 1, \dots, n \}$, and set $s \coloneqq k - m(n - j)$. By Equation~\eqref{eq:explicit_generalized_binomial_moment},
  \begin{equation*}
    \B[r]{\mathcal{C}}{j} = \sum_{J \in \qbinom{E}{j}{q}} \qbinom{\dim \mathcal{C}^{\perp}(J^{\perp}) + s}{r}{q}
    = \sum_{U \in \qbinom{E}{n - j}{q}} \qbinom{\dim \mathcal{C}^{\perp}(U) + s}{r}{q}.
  \end{equation*}
  Applying the $q$-Vandermonde identity
  \begin{equation*}
    \qbinom{a + b}{r}{q}
    = \sum_{i = 0}^{r} q^{i(b - r + i)} \qbinom{a}{i}{q} \qbinom{b}{r - i}{q}
  \end{equation*}
  of the extended Gaussian coefficients with $a=\dim\mathcal{C}^\perp(U)$ and $b=s$, and summing over $U$, yields
  \begin{equation*}
    \B[r]{\mathcal{C}}{j}
    = \sum_{i = 0}^{r} q^{i(s - r + i)} \qbinom{s}{r - i}{q} \sum_{U \in \qbinom{E}{n - j}{q}} \qbinom{\dim \mathcal{C}^{\perp}(U)}{i}{q}
    = \sum_{i = 0}^{r} q^{i(s - r + i)} \qbinom{s}{r - i}{q} \B[i]{\mathcal{C}^{\perp}}{n - j}
  \end{equation*}
  Substituting $s = k - m(n-j)$ concludes the proof.
\end{proof}

\begin{rem}\label{rem:n=m}
  In \cite{GeneralizedRankWeight}, when $n = m$, they define
  $\B[r]{\mathcal{C}}{j}$ as the average of the column and row versions
  in order to obtain a duality invariant.
  The proof above applies verbatim to each version (the row version
  corresponds to applying the same argument to the transposed code),
  and therefore also to their average.
\end{rem}

The inversion formula of \cite[Theorem~3.8]{GeneralizedRankWeight} gives a linear equivalence between $\{ \A[r]{\mathcal{C}}{w} \}_{w=0}^{n}$ and $\{ \B[r]{\mathcal{C}}{j} \}_{j=0}^{n}$. Therefore
Theorem~\ref{thm:MacWilliams-moments} implies the MacWilliams identity for generalized rank-weight distributions (cf. \cite[Corollary~7.4]{GeneralizedRankWeight}).

\subsection{MRD Codes}

\begin{dfn}
  Assume $m \geq n$. Let
  $d = d(C) \coloneqq \min\{\rank M \mid 0 \neq M\in \mathcal{C}\}$.
  If $k=m(n-d+1)$, $\mathcal{C}$ is called a \emph{maximum rank-distance (MRD) code}.
\end{dfn}

\begin{prop}[{\cite[Proposition~3.8]{q-uniform}, \cite[Corollary~6.6]{rankmetric_q-poly}}]
If $C$ is an MRD code, then $\P_{\mathcal{C}}$ is a uniform $(q,m)$-polymatroid with a rank function
\begin{equation*}
    \rho_{\mathcal{C}}(J) = m\min\set{\dim J, n-d+1} \qquad(J \in \mathcal{L}(E)).
\end{equation*}
\end{prop}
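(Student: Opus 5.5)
The plan is to sandwich $\rho_{\mathcal{C}}(J)$ between matching upper and lower bounds, each equal to $m\min\{\dim J,\, n-d+1\}$. Fix $J\leq E$ and put $j\coloneqq\dim J$ and $u\coloneqq\dim J^{\perp}=n-j$. Recall that $\rho_{\mathcal{C}}(J)=k-\dim\mathcal{C}(J^{\perp})$ and that $k=m(n-d+1)$.

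\emph{Upper bound.} This follows from the axioms alone. By (R1) we have $\rho_{\mathcal{C}}(J)\leq mj$. By (R2) we have $\rho_{\mathcal{C}}(J)\leq\rho_{\mathcal{C}}(E)=k-\dim\mathcal{C}(0)=k=m(n-d+1)$. Together these give $\rho_{\mathcal{C}}(J)\leq m\min\{j,\,n-d+1\}$.

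\emph{Lower bound.} It suffices to show
\begin{equation*}
\dim\mathcal{C}(U)\leq m\max\{0,\,u-d+1\}\qquad\text{for every } U\leq E \text{ with } \dim U=u.
\end{equation*}
Granting this, we get
\begin{equation*}
\rho_{\mathcal{C}}(J)\geq m(n-d+1)-m\max\{0,\,n-j-d+1\}=m\min\{n-d+1,\,j\},
\end{equation*}
which matches the upper bound. The displayed inequality is a Singleton-type bound for the subcode $\mathcal{C}(U)$, and I would prove it directly.

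\emph{Proof of the Singleton-type bound.} Choose an invertible $A\in\Mat{n}{n}{\mathbb{F}_q}$ with $AU=\langle\bm{e}_1,\dots,\bm{e}_u\rangle$. Left multiplication by $A$ preserves rank, so every nonzero $M\in\mathcal{C}(U)$ gives a matrix $AM$ of rank at least $d$ whose last $n-u$ rows are zero. If $u<d$, a nonzero such matrix would have at most $u<d$ nonzero rows, so its rank would be below $d$; hence $\mathcal{C}(U)=0$. If $u\geq d$, consider the linear map sending $M$ to rows $d,\dots,u$ of $AM$, a $(u-d+1)\times m$ matrix. This map is injective on $\mathcal{C}(U)$: a nonzero kernel element would have at most $d-1$ nonzero rows, so rank at most $d-1<d$, a contradiction. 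Therefore $\dim\mathcal{C}(U)\leq m(u-d+1)$.

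The only genuinely non-formal step is this Singleton-type bound applied to the shortened subcode $\mathcal{C}(U)$. The rest is bookkeeping with the definition of $\rho_{\mathcal{C}}$ and the axioms (R1)–(R2). I do not expect to need the hypothesis $m\geq n$ beyond its role in the definition of an MRD code.
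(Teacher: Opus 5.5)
Your proof is correct and complete. The paper gives no argument of its own for this proposition; it only cites \cite[Proposition~3.8]{q-uniform} and \cite[Corollary~6.6]{rankmetric_q-poly}. Your sandwich argument therefore supplies a self-contained proof. The upper bound is exactly (R1)--(R2), and your row-projection argument correctly gives $\dim\mathcal{C}(U)\le m\max\{0,\dim U-d+1\}$ for any code of minimum distance $d$. You are also right that $m\ge n$ plays no role.

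For comparison, the paper's own toolkit gives a shorter lower bound that needs only the trivial case $\dim U\le d-1$ of your estimate. Put $\ell\coloneqq n-d+1$. If $\dim V=\ell$, then $\dim V^{\perp}=d-1$, so $\mathcal{C}(V^{\perp})=0$ and $\rho_{\mathcal{C}}(V)=k=m\dim V$. Hence every $\ell$-dimensional subspace is independent, and by Lemma~\ref{lem:inherit-independency} so is every subspace of dimension at most $\ell$. Any $X$ with $\dim X\ge\ell$ contains such a $V$, so $k=\rho_{\mathcal{C}}(V)\le\rho_{\mathcal{C}}(X)\le\rho_{\mathcal{C}}(E)=k$.

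That route trades your linear-algebra step for submodularity (R3), which is what drives the hereditary-independence lemma. Your route never uses (R3) and yields a bit more. Combined with $\dim\mathcal{C}(U)\ge k-m(n-\dim U)$, which is (R1) rewritten, it gives the exact dimensions $\dim\mathcal{C}(U)=m\max\{0,\dim U-d+1\}$ of all shortened subcodes of an MRD code, without any appeal to duality.
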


\begin{thm}[{\cite[Lemma~4.5]{GeneralizedRankWeight}}]\label{thm:MRD_weights}
Let $m>n$, and let $\mathcal{C}$ be an $n\times m$ MRD code. Then, for $r=0$,
\begin{equation*}
W_{\mathcal{C}}^{(0)}(x,y)=x^n,
\end{equation*}
and for integers $r\geq 1$,
\begin{align*}
W_{\mathcal{C}}^{(r)}(x,y)
&=
\sum_{w=d}^{n}
\qbinom{n}{w}{q}
\left(
\sum_{u=d}^{w}
(-1)^{w-u} q^{\binom{w-u}{2}}
\qbinom{w}{u}{q}
\qbinom{k-m(n-u)}{r}{q}
\right)
x^{n-w}y^w
.
\end{align*}
\end{thm}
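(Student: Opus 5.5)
We derive the formula from the first form of Theorem~\ref{thm:Greene_subsp}, i.e.\ from the expression of $W_{\mathcal{C}}^{(r)}$ through $\dim\mathcal{C}(J)$, combined with the uniform rank function of MRD codes given in the preceding proposition. The case $r=0$ is immediate: the only $0$-dimensional subcode is $0$, whose support is $0$, so $W_{\mathcal{C}}^{(0)}=x^n$. From now on let $r\ge 1$.

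\textbf{Step 1: compute $\dim\mathcal{C}(J)$.} By the proposition on MRD codes, $\rho(J)=m\min\{\dim J,\,n-d+1\}$ and $\rho(E)=k=m(n-d+1)$. For $J$ with $\dim J=u$ we have $\dim J^{\perp}=n-u$, hence
\begin{equation*}
\dim\mathcal{C}(J)=k-\rho(J^\perp)=k-m\min\{n-u,\,n-d+1\}.
\end{equation*}
If $u\ge d-1$, this equals $k-m(n-u)$. If $u\le d-1$, it equals $k-m(n-d+1)=0$. In particular $\dim\mathcal{C}(J)=0$ for $u=d-1$, and this agrees with the value $k-m(n-u)$ there.

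\textbf{Step 2: count subcodes by exact support.} Rather than expanding the $g$-basis, work with the support distribution, which gives the coefficient of $x^{n-w}y^w$ directly. By Theorem~\ref{thm:equivalence_A_P}(1), or equivalently by M\"obius inversion of Lemma~\ref{lem:B-gaussian},
\begin{equation*}
\A[r]{\mathcal{C}}{W}=\sum_{J\le W}\mu(J,W)\qbinom{\dim\mathcal{C}(J)}{r}{q}.
\end{equation*}
For $r\ge1$ the terms with $\dim J\le d-1$ vanish, since then $\qbinom{0}{r}{q}=0$. Group the remaining terms by $u=\dim J$. There are $\qbinom{w}{u}{q}$ subspaces $J$ of dimension $u$ inside a $w$-dimensional $W$, and each contributes $\mu(J,W)=(-1)^{w-u}q^{\binom{w-u}{2}}$. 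This gives
\begin{equation*}
\A[r]{\mathcal{C}}{W}=\sum_{u=d}^{w}(-1)^{w-u}q^{\binom{w-u}{2}}\qbinom{w}{u}{q}\qbinom{k-m(n-u)}{r}{q}.
\end{equation*}

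\textbf{Step 3: sum over $W$.} The right-hand side of Step~2 depends only on $w=\dim W$. We have $W_{\mathcal{C}}^{(r)}=\sum_{W}\A[r]{\mathcal{C}}{W}\,x^{n-w}y^{w}$. Summing over the $\qbinom{n}{w}{q}$ subspaces of each dimension $w$ gives the stated formula. For $w<d$ the inner sum is empty, so it vanishes, which matches the lower limit $w=d$ in the outer sum.

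\textbf{Main obstacle.} The only delicate point is Step~1 at the boundary. We must confirm that the terms with $u<d$ vanish and that we never need Gaussian coefficients with negative upper argument, since $k-m(n-u)<0$ when $u<d-1$. This is handled by using the true value $\dim\mathcal{C}(J)=0$ there rather than the formula $k-m(n-u)$. The hypothesis $m>n$ enters only through the MRD proposition, which supplies the uniform structure.
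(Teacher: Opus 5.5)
Your proof is correct, but it takes a different route from the paper's.

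The paper starts from the second form of Theorem~\ref{thm:Greene_subsp}, $W_{\mathcal{C}}^{(r)}=\sum_{U}\qbinom{\rho(E)-\rho(U)}{r}{q}y^{n-\dim U}g_{\dim U}(x,y)$. It uses the uniform rank function to restrict to $\dim U=j\le n-d$ and expands each $g_j(x,y)$ into monomials via Lemma~\ref{lem:q-Pochhammer}. It then interchanges the sums, applies the identity $\qbinom{n}{j}{q}\qbinom{j}{w-n+j}{q}=\qbinom{n}{w}{q}\qbinom{w}{n-j}{q}$, and substitutes $u=n-j$.

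You instead stay at the level of the higher support distribution. Theorem~\ref{thm:equivalence_A_P}(1) already gives the coefficient of $x^{n-w}y^{w}$. Counting the $\qbinom{w}{u}{q}$ subspaces $J\le W$ of each dimension then replaces both the sum interchange and the product identity. Theorem~\ref{thm:Greene_subsp} was itself obtained from that M\"obius formula through Lemma~\ref{lem:basis_exchange_with_Mobius}. So the paper's computation essentially undoes that change to the $g$-basis, and your argument avoids the round trip. It also gives the slightly stronger pointwise fact that, for an MRD code, $A^{(r)}_{\mathcal{C}}(W)$ depends only on $\dim W$.

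What the paper's route buys is thematic: it presents the MRD formula as a direct application of the Greene-type identity for the uniform $(q,m)$-polymatroid.

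Your boundary handling is right and mirrors the paper's restriction to $j\le n-d$. You use $\dim\mathcal{C}(J)=0$ for $\dim J\le d-1$, so only Gaussian coefficients with upper argument $k-m(n-u)=m(u-d+1)\ge m$ appear.

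One cosmetic point: your opening sentence says you derive the result from the first form of Theorem~\ref{thm:Greene_subsp}. The argument never actually uses that theorem; it uses Theorem~\ref{thm:equivalence_A_P}(1) directly.
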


\begin{proof}
The case $r=0$ is immediate.
Assume that $r\geq 1$. We have
\begin{align*}
W_{\mathcal{C}}^{(r)}(x,y)
&=\sum_{U\leq E}
  \qbinom{\rho(E) - \rho(U)}{r}{q} \,
  y^{\,n-\dim U}\,
  g_{\dim U}(x,y).\\
&=\sum_{j=0}^{n-d} \qbinom{n}{j}{q}\qbinom{k - mj}{r}{q}y^{n-j}g_{j}(x, y)\\
&=\sum_{j=0}^{n-d} \qbinom{n}{j}{q}\qbinom{k - mj}{r}{q}y^{n-j}\sum_{s=0}^{j}(-1)^s q^{\binom{s}{2}}\qbinom{j}{s}{q}x^{j-s}y^s\\
&=\sum_{j=0}^{n-d} \qbinom{n}{j}{q}\qbinom{k - mj}{r}{q}\sum_{w=n-j}^{n}(-1)^{w-n+j} q^{\binom{w-n+j}{2}}\qbinom{j}{\,w-n+j\,}{q}x^{n-w}y^w
\qquad (w=n-j+s)\\
&=\sum_{w=d}^{n}\left(\sum_{j=n-w}^{n-d}(-1)^{w-n+j} q^{\binom{w-n+j}{2}}\qbinom{n}{j}{q}\qbinom{j}{\,w-n+j\,}{q}\qbinom{k-mj}{r}{q}\right)x^{n-w}y^w.
\end{align*}
Using
$\qbinom{n}{j}{q}\qbinom{j}{\,w-n+j\,}{q}=\qbinom{n}{w}{q}\qbinom{w}{\,n-j\,}{q}$,
we obtain
\begin{align*}
W_{\mathcal{C}}^{(r)}(x, y)
&=
\sum_{w=d}^{n}
\qbinom{n}{w}{q}
\left(
\sum_{j=n-w}^{n-d}
(-1)^{w-n+j} q^{\binom{w-n+j}{2}}
\qbinom{w}{\,n-j\,}{q}
\qbinom{k-mj}{r}{q}
\right)
x^{n-w}y^w\\
&=
\sum_{w=d}^{n}
\qbinom{n}{w}{q}
\left(
\sum_{u=d}^{w}
(-1)^{w-u} q^{\binom{w-u}{2}}
\qbinom{w}{u}{q}
\qbinom{k-m(n-u)}{r}{q}
\right)
x^{n-w}y^w
\qquad (u=n-j).\qedhere
\end{align*}
\end{proof}

\begin{eg}
Let $\mathcal{C} \coloneqq \langle M_{1}, M_{2}, M_{3} \rangle \leq \Mat{2}{3}{\mathbb{F}_{2}}$
be the rank-metric code in Example~\ref{eg:rank_generating_function}.
We first show that $\mathcal{C}$ is an MRD code.
Every codeword is of the form
\begin{equation*}
M(a,b,c)=aM_1+bM_2+cM_3=
\begin{pmatrix}
a & b & c\\
c & a+c & b
\end{pmatrix}
\qquad (a,b,c\in\mathbb F_2).
\end{equation*}
If $M(a,b,c)\neq 0$, then its two rows are linearly independent, and hence
$\rank M(a,b,c)=2$. Therefore every nonzero codeword has rank $2$, so
$d(\mathcal{C})=2$. Since $\dim \mathcal{C}=3$, we obtain
\begin{equation*}
\dim \mathcal{C} = 3 = 3(2-2+1) = m(n - d + 1),
\end{equation*}
and thus $\mathcal{C}$ is an MRD code with $n=2$, $m=3$, $k=3$, and $d=2$. 
Then, Theorem~\ref{thm:MRD_weights} gives
\begin{align*}
W_{\mathcal{C}}^{(r)}(x,y)
&=
\sum_{w=2}^{2}
\qbinom{2}{w}{2}
\left(
\sum_{u=2}^{w}
(-1)^{w-u} 2^{\binom{w-u}{2}}
\qbinom{w}{u}{2}
\qbinom{3-3(2-u)}{r}{2}
\right)
x^{2-w}y^w\\
&=\qbinom{3}{r}{2}y^2
\end{align*}
for each $r\in \{ 1, 2, 3 \}$. Thus
\begin{equation*}
W_{\mathcal{C}}^{(1)}(x,y)=7y^2,\qquad
W_{\mathcal{C}}^{(2)}(x,y)=7y^2,\qquad
W_{\mathcal{C}}^{(3)}(x,y)=y^2,
\end{equation*}
which agree with the direct computation in Example~\ref{eg:higher_rank-weight_enumerator}.
\end{eg}

\section*{Acknowledgments}

This work was supported by
the Japan Society for the Promotion of Science (JSPS) KAKENHI
Grants JP25K17298 and JP25K07103, JST K Program Grant Number JPMJKP24U2, 
and the Japan Science and Technology Agency (JST) BOOST Grants JPMJBS2408. 
\bibliography{reference}

@book{Stanley,
  author = {Stanley, Richard P.},
  isbn = {9781139811729 113981172X},
  publisher = {Cambridge University Press},
  refid = {821883332},
  title = {Enumerative Combinatorics},
  url = {http://math.mit.edu/~rstan/ec/},
  volume = 2,
  year = 1999
}

@article {GeneralizedRankWeight,
    AUTHOR = {Byrne, Eimear and Cotardo, Giuseppe and Ravagnani, Alberto},
     TITLE = {Rank-metric codes, generalized binomial moments and their zeta
              functions},
   JOURNAL = {Linear Algebra Appl.},
  FJOURNAL = {Linear Algebra and its Applications},
    VOLUME = {604},
      YEAR = {2020},
     PAGES = {92--128},
      ISSN = {0024-3795,1873-1856},
   MRCLASS = {11T71 (05A17 11M41)},
  MRNUMBER = {4116823},
MRREVIEWER = {Steven\ T.\ Dougherty},
       DOI = {10.1016/j.laa.2020.06.002},
       URL = {https://doi.org/10.1016/j.laa.2020.06.002},
}

@misc{GLJ25MonomialSubstitution,
      title={Polynomial Invariants of $q$-Matroids and Rank-Metric Codes}, 
      author={Heide Gluesing-Luerssen and Benjamin Jany},
      year={2025},
      eprint={2509.21618},
      archivePrefix={arXiv},
      primaryClass={math.CO},
      note={arXiv:2509.21618},
      url={https://arxiv.org/abs/2509.21618}, 
}

@article {Shiromoto2019,
    AUTHOR = {Shiromoto, Keisuke},
     TITLE = {Codes with the rank metric and matroids},
   JOURNAL = {Des. Codes Cryptogr.},
  FJOURNAL = {Designs, Codes and Cryptography. An International Journal},
    VOLUME = {87},
      YEAR = {2019},
    NUMBER = {8},
     PAGES = {1765--1776},
      ISSN = {0925-1022,1573-7586},
   MRCLASS = {94B60 (05B35)},
  MRNUMBER = {3974800},
       DOI = {10.1007/s10623-018-0576-0},
       URL = {https://doi.org/10.1007/s10623-018-0576-0},
}

@article {HigherSupportMatroid,
    AUTHOR = {Britz, Thomas},
     TITLE = {Higher support matroids},
   JOURNAL = {Discrete Math.},
  FJOURNAL = {Discrete Mathematics},
    VOLUME = {307},
      YEAR = {2007},
    NUMBER = {17-18},
     PAGES = {2300--2308},
      ISSN = {0012-365X,1872-681X},
   MRCLASS = {05B35 (94B05)},
  MRNUMBER = {2340630},
MRREVIEWER = {Keisuke\ Shiromoto},
       DOI = {10.1016/j.disc.2006.12.001},
       URL = {https://doi.org/10.1016/j.disc.2006.12.001},
}

@article {Greene,
    AUTHOR = {Greene, Curtis},
     TITLE = {Weight enumeration and the geometry of linear codes},
   JOURNAL = {Studies in Appl. Math.},
  FJOURNAL = {Studies in Applied Mathematics},
    VOLUME = {55},
      YEAR = {1976},
    NUMBER = {2},
     PAGES = {119--128},
      ISSN = {0022-2526,1467-9590},
   MRCLASS = {05B35 (94A10)},
  MRNUMBER = {447020},
MRREVIEWER = {Thomas\ Brylawski},
       DOI = {10.1002/sapm1976552119},
       URL = {https://doi.org/10.1002/sapm1976552119},
}

@article {HigherSupportWeight,
    AUTHOR = {Britz, Thomas},
     TITLE = {Code enumerators and {T}utte polynomials},
   JOURNAL = {IEEE Trans. Inform. Theory},
  FJOURNAL = {Institute of Electrical and Electronics Engineers.
              Transactions on Information Theory},
    VOLUME = {56},
      YEAR = {2010},
    NUMBER = {9},
     PAGES = {4350--4358},
      ISSN = {0018-9448,1557-9654},
   MRCLASS = {94A05 (05C31 94B05)},
  MRNUMBER = {2807332},
       DOI = {10.1109/TIT.2010.2054654},
       URL = {https://doi.org/10.1109/TIT.2010.2054654},
}

@article {WeightedSubspaceDesign,
    AUTHOR = {Byrne, Eimear and Ceria, Michela and Ionica, Sorina and
              Jurrius, Relinde},
     TITLE = {Weighted subspace designs from {$q$}-polymatroids},
   JOURNAL = {J. Combin. Theory Ser. A},
  FJOURNAL = {Journal of Combinatorial Theory. Series A},
    VOLUME = {201},
      YEAR = {2024},
     PAGES = {Paper No. 105799, 40},
      ISSN = {0097-3165,1096-0899},
   MRCLASS = {05B35 (05B05)},
  MRNUMBER = {4631380},
MRREVIEWER = {Keisuke\ Shiromoto},
       DOI = {10.1016/j.jcta.2023.105799},
       URL = {https://doi.org/10.1016/j.jcta.2023.105799},
}

@inproceedings{Lupas87,
  title={A $q$-analogue of the {Bernstein} operator},
  author={Lupas, A},
  booktitle={Seminar on numerical and statistical calculus, University of Cluj-Napoca},
  volume={9},
  number={85-92},
  year={1987}
}

@article {q-uniform,
    AUTHOR = {Gluesing-Luerssen, Heide and Jany, Benjamin},
     TITLE = {{$q$}-polymatroids and their relation to rank-metric codes},
   JOURNAL = {J. Algebraic Combin.},
  FJOURNAL = {Journal of Algebraic Combinatorics. An International Journal},
    VOLUME = {56},
      YEAR = {2022},
    NUMBER = {3},
     PAGES = {725--753},
      ISSN = {0925-9899,1572-9192},
   MRCLASS = {05B35 (94B05)},
  MRNUMBER = {4491058},
MRREVIEWER = {Keisuke\ Shiromoto},
       DOI = {10.1007/s10801-022-01129-y},
       URL = {https://doi.org/10.1007/s10801-022-01129-y},
}

@article {rankmetric_q-poly,
    AUTHOR = {Gorla, Elisa and Jurrius, Relinde and L\'opez, Hiram H. and
              Ravagnani, Alberto},
     TITLE = {Rank-metric codes and {$q$}-polymatroids},
   JOURNAL = {J. Algebraic Combin.},
  FJOURNAL = {Journal of Algebraic Combinatorics. An International Journal},
    VOLUME = {52},
      YEAR = {2020},
    NUMBER = {1},
     PAGES = {1--19},
      ISSN = {0925-9899,1572-9192},
   MRCLASS = {94B60 (05A30 05B35)},
  MRNUMBER = {4128042},
MRREVIEWER = {N.\ L.\ Manev},
       DOI = {10.1007/s10801-019-00889-4},
       URL = {https://doi.org/10.1007/s10801-019-00889-4},
}

@article{RAVAGNANI2016,
TITLE = {Generalized weights: An anticode approach},
JOURNAL = {Journal of Pure and Applied Algebra},
VOLUME = {220},
NUMBER = {5},
PAGES = {1946-1962},
YEAR = {2016},
ISSN = {0022-4049},
DOI = {https://doi.org/10.1016/j.jpaa.2015.10.009},
URL = {https://www.sciencedirect.com/science/article/pii/S0022404915002893},
AUTHOR = {Alberto Ravagnani},
}

@article {demipoly2020,
    AUTHOR = {Ghorpade, Sudhir R. and Johnsen, Trygve},
     TITLE = {A polymatroid approach to generalized weights of rank metric
              codes},
   JOURNAL = {Des. Codes Cryptogr.},
  FJOURNAL = {Designs, Codes and Cryptography. An International Journal},
    VOLUME = {88},
      YEAR = {2020},
    NUMBER = {12},
     PAGES = {2531--2546},
      ISSN = {0925-1022,1573-7586},
   MRCLASS = {05B35 (15A03 94B60)},
  MRNUMBER = {4171316},
MRREVIEWER = {Maruti\ M.\ Shikare},
       DOI = {10.1007/s10623-020-00798-9},
       URL = {https://doi.org/10.1007/s10623-020-00798-9},
}

@incollection{Koornwinder1997,
  AUTHOR    = {Tom H. Koornwinder},
  TITLE     = {Special functions and q-commuting variables},
  BOOKTITLE = {Special Functions, q-Series and Related Topics},
  EDITOR    = {M. E. H. Ismail and D. R. Masson and M. Rahman},
  SERIES    = {Fields Institute Communications},
  VOLUME    = {14},
  PAGES     = {131--166},
  PUBLISHER = {American Mathematical Society},
  YEAR      = {1997}
}

@inproceedings{GadouleauYan2007MacWilliamsRankMetric,
  author    = {Gadouleau, Maximilien and Yan, Zhiyuan},
  title     = {{MacWilliams} Identity for the Rank Metric},
  booktitle = {Proceedings of the 2007 IEEE International Symposium on Information Theory},
  year      = {2007},
  pages     = {2486--2490},
  doi       = {10.1109/ISIT.2007.4557595}
}

@article{klove,
title = {Support weight distribution of linear codes},
journal = {Discrete Mathematics},
volume = {106-107},
pages = {311-316},
year = {1992},
issn = {0012-365X},
doi = {https://doi.org/10.1016/0012-365X(92)90559-X},
url = {https://www.sciencedirect.com/science/article/pii/0012365X9290559X},
author = {Torleiv Kløve}
}
\bibliographystyle{abbrv}

\end{document}